\numberwithin{equation}{section}
\numberwithin{table}{section}
\numberwithin{figure}{section}
\title{Iterative methods\\
for shifted positive definite  linear systems\\
and time discretization of the heat equation}
\author{William McLean\thanks{School of Mathematics and Statistics,
The University of New South Wales,
Sydney 2052, Australia
(\texttt{w.mclean@unsw.edu.au})}
\and
Vidar Thom\'ee\thanks{Mathematical Sciences,
Chalmers University of Technology and University of Gothenburg,
S-412~96 Gothenburg, Sweden
(\texttt{thomee@chalmers.se})}}
\newcommand{\rhosz}{\breve\rho_z}
\newcommand{\varphisz}{\breve\varphi_z}
\newcommand{\epsilonsz}{\breve\ep_z}
\newcommand{\nusz}{\breve\nu_z}
\newcommand{\alphasz}{\breve\alpha_z}
\newcommand{\Lambdasz}{\breve\Lambda_z}
\newcommand{\epsilonso}{\breve\ep_0}
\newcommand{\alphaso}{\breve\alpha_0}
\newcommand{\tb}[1]{|\hskip -.08em|\hskip-.08em|#1|\hskip-.08em|\hskip-.08em|}
\newcommand{\db}[1]{|[#1]|}
\newcommand{\al}{\alpha}
\newcommand{\be}{\beta}
\newcommand{\ga}{\gamma}
\newcommand{\Ga}{\Gamma}
\newcommand{\si}{\sigma}
\newcommand{\Si}{\Sigma}
\newcommand{\ep}{\varepsilon}
\newcommand{\la}{\lambda}
\newcommand{\La}{\Lambda}
\newcommand{\om}{\omega}
\newcommand{\Om}{\Omega}
\newcommand{\ka}{\kappa}
\newcommand{\for}{\quad\text{for}\ }
\newcommand{\andy}{\quad\text{and}\ }
\newcommand{\with}{\quad\text{with}\ }
\newcommand{\where}{\quad\text{where}\ }
\newcommand{\wt}{\widetilde}
\newcommand{\wh}{\widehat}
\newcommand{\fy}{\varphi}
\newcommand{\Lapl}{\mathcal{L}}
\newcommand{\A}{\mathcal{A}}
\newcommand{\B}{\mathcal{B}}
\newcommand{\F}{\mathcal{F}}
\renewcommand{\H}{\mathcal{H}}
\renewcommand{\H}{\mathcal{H}}
\renewcommand{\L}{\mathcal{L}}
\newcommand{\M}{\mathcal{M}}
\newcommand{\I}{\mathcal{I}}
\newcommand{\D}{\mathcal{D}}
\newcommand{\E}{\mathcal{E}}
\newcommand{\T}{\mathcal{T}}
\newcommand{\R}{\mathbb{R}}
\newcommand{\C}{\mathbb{C}}
\newcommand{\Poly}{\mathbb{P}}
\newcommand{\vecspan}{\operatorname{span}}
\renewcommand{\Re}{\operatorname{Re}}
\renewcommand{\Im}{\operatorname{Im}}
\renewcommand{\S}{\mathcal{S}}
\renewcommand{\hat}{\widehat}
\newcommand{\vecg}{\boldsymbol{g}}
\newcommand{\vecp}{\boldsymbol{p}}
\newcommand{\vecr}{\boldsymbol{r}}
\newcommand{\vecu}{\boldsymbol{u}}
\newcommand{\vecv}{\boldsymbol{v}}
\newcommand{\vecw}{\boldsymbol{w}}
\newcommand{\iprod}[1]{\langle#1\rangle}
\renewcommand{\a}{\mathsf{a}}
\renewcommand{\b}{\mathsf{b}}
\renewcommand{\c}{\mathsf{c}}
\renewcommand{\d}{\mathsf{d}}
\newcommand{\abar}{\overline{\mathsf{a}}}
\newcommand{\bbar}{\overline{\mathsf{b}}}
\newcommand{\cbar}{\overline{\mathsf{c}}}
\newcommand{\dbar}{\overline{\mathsf{d}}}
\newcommand{\ff}{\mathsf{f}}
\newtheorem{theorem}{Theorem}[section]
\newtheorem{lemma}[theorem]{Lemma}
\newtheorem{proposition}[theorem]{Proposition}
\newtheorem{corollary}[theorem]{Corollary}
\begin{document}
\maketitle
\begin{abstract}

In earlier work we have studied a method for discretization in time
of a parabolic problem which consists in representing the exact solution
as an integral in the complex plane and then applying a quadrature 
formula to this  integral. In application to a spatially semidiscrete
finite element version of the parabolic problem, at each quadrature point one
then needs to solve a linear algebraic system
having a positive definite matrix
with a complex shift, and in this paper we study iterative methods
for such systems. We first consider the basic and a preconditioned version
of the Richardson algorithm, and then a conjugate gradient method as 
well as a preconditioned version thereof.  
\end{abstract}
\paragraph{Keywords:}
Laplace transform, finite elements, quadrature, Richardson iteration, 
conjugate gradient method, preconditioning.

\paragraph{AMS subject classifications:}
65F10, 
65M22, 
65M60, 
65R10  
\section{Introduction}
Let $V$ be a complex finite-dimensional inner product space,
and let $A$ be a positive definite Hermitian linear 
operator in $V$, with spectrum~$\sigma(A)$. We shall consider 
iterative methods for the linear equation
\begin{equation}\label{1.0}
zw+Aw=g,\where z=x+iy\not\in -\si(A).
\end{equation}
Such equations, with a complex shift $z$ of the positive definite
operator $A$, need to be solved in a method for discretization
in time of parabolic equations, based on Laplace transformation
and quadrature, which has been studied recently, as will be
made more specific below.  Equations of the form~\eqref{1.0} arise
also from the spatial discretization of the Helmholtz 
equation, cf.~\cite{Freund1992}, but in that context the $z$-values typically
of interest differ from those we wish to consider; for our application to
the heat equation, $\arg z$ is bounded away from~$\pm\pi$.
In this paper we  shall consider a basic Richardson iteration and 
a conjugate gradient (CG) method for \eqref{1.0}, 
as well as preconditioned versions of these methods.  Another approach,
not discussed here, is to reformulate the complex
linear system as an equivalent real one with twice as many equations
and unknowns, cf., e.g.,
\cite{BenziBertaccini2008} and the list of references therein.
\medskip

We begin by sketching the time discretization method referred to above.
In a  complex Hilbert space $\H$
we consider the initial-value problem 
\begin{equation}\label{ivp}
u_t+Au=f(t),\for t>0,\with u(0)=u_0,
\end{equation}
where $A$ is a positive definite Hermitian operator in $\H$.
To represent its solution we apply the Laplace transform, writing
\[
w(z)=\hat u(z)=\Lapl u(z):=\int_0^\infty e^{-zt}u(t)\,dt,\quad \Re z>0.
\]
Under appropriate assumptions on $f(t)$ we then have formally
\[
(zI+A)w(z)=w_0+\hat f(z)=:g(z),
\]
or, multiplying by  the resolvent of $-A$,
\[
w(z)=R(z)g(z),\where
R(z):=(zI+A)^{-1}.
\]
Applying the inverse Laplace transform, we obtain
\[
u(t)=(\Lapl^{-1}w)(t)=\frac1{2\pi i}\int_{\Ga_\om} e^{zt} w(z) \,dz,
\]
for $\Ga_\om:=\{z:\ \Re z=\om\}$ and $\om>0$.
With $\fy\in(\tfrac12\pi,\pi)$ and $\Ga$
a new contour in $\Si_\fy=\{z:\ |\arg z|<\fy\}$, 
homotopic with $\Ga_\om$, we may write
\[
u(t)=\frac1{2\pi i}\int_{\Ga} e^{zt}
w(z)\,dz.
\]
A suitable parametrization of $\Ga$, written $z=z(\xi)$ for~$\xi\in \R$, 
yields
\begin{equation}\label{1.inte}
u(t)=\int_{-\infty}^\infty v(\xi,t)\,d\xi,
\where
v(\xi,t):=\frac{1}{2\pi i}\,e^{z(\xi)t}
	w\bigl(z(\xi)\bigr)z'(\xi).
\end{equation}
We assume  $\Re z(\xi)\to -\infty$ as $|\xi|\to\infty$
so that $e^{z(\xi)\,t}\to 0$, for $t>0$.

We now define an approximate solution 
of \eqref{ivp} by  means of an equal-weight  quadrature rule,
applied to the integral in \eqref{1.inte},
\begin{equation}\label{1.app}
U_q(t):=k\sum_{j=-q}^q v(\xi_j,t)
=\frac k{2\pi i}\sum_{j=-q}^q e^{z_jt}w(z_j)z'_j,
\end{equation}
where, for an appropriate~$k>0$, we have set
\begin{equation}\label{1.appi}
\xi_j:=jk\in \R,\quad z_j:=z(\xi_j),\quad z_j':=z'(\xi_j),
\for |j|\le q.
\end{equation}
To compute $U_q(t)$, we need to  solve the $2q+1$ ``elliptic" equations
\begin{equation*}\label{zeq}
(z_jI+A)w(z_j)= g(z_j),\for |j|\le q.
\end{equation*}
These equations are independent, and may thus be solved
in parallel.
We note that the $w(z_j)$ determine $U_q(t)$ for all $t>0$, but 
we can expect an accurate approximation only for~$t$ in some restricted
interval that depends on the choice of the quadrature step~$k$ and of
the parametric representation~$z(\xi)$.

In our presentation  we shall
follow the analysis of \cite{biv06}. Specifically, we use 
for $\Ga$ the left branch of the hyperbola
$(x-1)^2-y^2=1$ in the complex plane,
parametrized by
\begin{equation*}\label{eq: standard contour}
z(\xi)=1-\cosh \xi+i\,\sinh \xi,\quad  \xi\in \R,
\end{equation*}
and take $k=\log q/q$ for the step size in \eqref{1.app}.
This means that 
\[
z_j=x_j+i\,y_j=1-\cosh\biggl(\frac{j\log q}q\biggr)
	+i\,\sinh\biggl(\frac{j\log q}{q}\biggr),
\for |j|\le q.
\]
In particular, $z_q=1-(q+q^{-1})/2+i\,(q-q^{-1})/2\approx -q/2+i\,q/2$ for
large~$q$.
Under the appropriate assumptions about the data of the problem
we then have the error estimate, see \cite{biv06}, 
with $0<t_0<T<\infty$,
\[
\|U_{q}(t)-u(t)\|\le 
C_{t_0,T}(u_0,f)\,e^{-c\, q/\log q},\for t\in[t_0, T].
\]

We now want to apply this time discretization scheme to the semidiscrete
finite element approximation of the heat equation, with elliptic
operator~$Lu=-\nabla\cdot(a\nabla u)$, and
consider thus the initial boundary-value problem for $u=u(x,t)$,
\begin{equation}
\label{1.heat}
\begin{aligned}
u_t+Lu&=f(\cdot,t),&&\text{in $\Omega$,}\quad 
\text{with $u=0$ on $\partial\Om$,}\quad\text{for $t>0$,}\\
u(0,\cdot)&=u_0,&&\text{in $\Omega$,}
\end{aligned}
\end{equation}
where, for simplicity, we will assume that the diffusivity~$a$ is a 
(positive) constant, and that $\Omega$ is a convex polygonal domain in~$\R^2$.
This problem is the special case of \eqref{ivp} with
$\H=L_2(\Om)$ and $A=L$, taking $D(L)=H^2(\Om)\cap H_0^1(\Om)$.

Let $\{V_h\}\subset H_0^1(\Om)$
be a family of piecewise linear finite element spaces,
based on a family of regular  triangulations $\T_h=\{\tau\}$ of $\Om$.
With $(v,w)=\int_\Om v\,\bar w\,dx$,
the standard Galerkin,
spatially semidiscrete approximation of \eqref{1.heat}  is
\begin{align*}
(u_{h,t},\chi)+a(\nabla u_{h},\nabla \chi)
	=(f,\chi),\quad\forall\chi\in V_h,\ t>0,
\with u_h(0)=u_{0h},
\end{align*}
where, with
$P_h: L_2(\Om)\to V_h$ the $L_2$-projection onto $V_h$, we may take,
e.g., $u_{0h}=P_hu_0$.
Introducing the discrete elliptic operator~$L_h:V_h\to V_h$, defined by
\[
(L_h\psi,\chi)=a(\nabla\psi,\nabla\chi),\quad\forall\psi,\chi\in V_h,
\]
the spatially semidiscrete initial-value problem may also be written
\[
u_{h,t}+L_h u_h=P_hf(\cdot,t),\for t>0,\with
u_h(0)=P_hu_0,
\]
which is of the form 
\eqref{ivp} with $\H=V_h,$ equipped with the $L_2$ inner product,
and $A=L_h$.
The fully discrete solution defined by our above time discretization
method \eqref{1.app} now takes the form
\begin{equation}\label{1.dissol}
U_{q,h}(t):=\frac{k}{2\pi i}\sum_{j=-q}^qe^{z_jt} w_h(z_j) \,z'_j,
\end{equation}
with $z_j, z_j'$ as in \eqref{1.appi} and where
the $w_h(z_j)$ are derived from
\begin{equation*}\label{zeqh}
(z_jI+L_h)w_h(z_j)= P_hg(z_j),\for |j|\le q,
\end{equation*}
or, in weak form,
\begin{equation}\label{femsys}
z_j\bigl(w_h(z_j),\chi\bigr)
        +a\bigl(\nabla w_h(z_j),\nabla\chi\bigr)
        =\bigl(g(z_j),\chi\bigr),
        \quad \forall\ \chi\in V_h.
\end{equation}
As before, these problems may be solved in parallel.
We note that they are special cases
of~\eqref{1.0}, with $V=V_h$ and $A=L_h$.
Under appropriate assumptions on the data~\cite{biv06} the error in the
fully discrete solution may be bounded as
\begin{equation}\label{1.err}
\|U_{q,h}(t)-u(t)\|\le 
C_{t_0,T}(u_0,f)(h^2+e^{-c\, q/\log q}),
\for t\in [t_0,T]\subset(0,\infty). 
\end{equation}

To express~\eqref{femsys} in matrix form, let
$\{P_i\}_{i=1}^N$ be the interior nodes of $\T_h$ and $\{\Phi_i\}_{i=1}^N$ the
associated nodal basis functions, so that 
$v\in V_h$ may be written as $v=\sum_{i=1}^N\vecv_i\Phi_i$ with 
$\vecv_i:=v(P_i)$.
Let $\M=(m_{il})$ and $\S=(s_{il})$ be the mass and stiffness matrices, where
$m_{il}:=(\Phi_i,\Phi_l)$ and $s_{il}:=a(\nabla\Phi_i,\nabla\Phi_l)$,
respectively. With 
$w=w_h(z_j)=\sum_{i=1}^N\vecw_i\Phi_i,$
equation~\eqref{femsys} is then equivalent to
\begin{equation}\label{1.matsg}
z_j\M\vecw+\S\vecw=\vecg 
\quad\text{or}\quad
z_j\,\vecw+\M^{-1}\S\vecw=\M^{-1}\vecg,
\end{equation}
where the components of the load vector are 
$\vecg_i=\bigl(g(z_j),\Phi_i\bigr)$.
The second equation in~\eqref{1.matsg} is of the form~\eqref{1.0}
with $A\vecv=\M^{-1}\S\vecv$ and $g=\M^{-1}\vecg$.
However, instead of the standard
unitary inner product~$\iprod{\vecv,\vecw}=\sum_{i=1}^N v_i\bar w_i$,
we equip $V=\C^N$ with~$(\vecv,\vecw)=\iprod{\M\vecv,\vecw}$ so that $A$
is Hermitian: $(A\vecv,\vecw)=\iprod{\M A\vecv,\vecw}=\iprod{\S\vecv,\vecw}$.
In our study of iterative methods for~\eqref{1.matsg}, we develop the theory
for an abstract operator~$A$ satisfying our assumptions, and discuss 
separately the practical implications for the specific choices
$A=L_h$~and, especially, $A=\M^{-1}\S$.

As an alternative to the standard Galerkin method we may consider
the lumped mass modification, in which the mass matrix $\M$
is replaced by a diagonal matrix $\D$; we refer to \cite{th06} for details.

For any Hermitian operators $A$~and $B$ in~$V$, with $B$ positive definite,
we will write $\lambda_j=\lambda_j(A,B)$ for
the $j$th generalized eigenvalue of~$A$ with respect to~$B$, that is,
$Av_j=\lambda_jBv_j$ with $v_j\ne0$.  We order these
eigenvalues so that $\lambda_1\le\lambda_2\le\cdots\le\lambda_N$, and
use the abbreviation $\lambda_j(A)=\lambda_j(A,I)$

The program for time discretization of parabolic
equations sketched above was initiated in Sheen,
Sloan and Thom\'ee \cite{SST99,SST03},
and continued in Gavrilyuk and Makarov \cite{GM}, McLean, Sloan, and 
Thom\'ee \cite{biv06} and McLean and Thom\'ee \cite{bivi04,bivi07,bivi08}, 
cf.~also Thom\'ee \cite{th05}, and the error 
in~\eqref{1.dissol} was analyzed in both $L_2(\Om)$~and $L_\infty(\Omega)$,
under various assumptions on the data of the problem. 
In the latter papers also fractional order diffusion equations were treated.
  
In these papers the analysis was illustrated by numerical examples. These 
were carried out in simple cases, in one space dimension and also in the
case of a square spatial domain in two dimensions, and direct solvers 
were used for the linear system~\eqref{femsys}.  However,  even though 
powerful direct solvers are available, for large size problems in more
complicated geometries, particularly in 3D, it may be natural to apply 
iterative methods, and our purpose in this paper is therefore to 
begin a study of such methods for equations of the form~\eqref{1.0}, 
with application to the heat equation in mind.
Some preliminary results on this problem were sketched in~\cite{SST03},
using the Richardson iteration algorithm
for \eqref{femsys} and for a preconditioned form
of this equation, and in Section~\ref{sec: Richardson} below we extend and
improve these results.  

From a knowledge of the extremal eigenvalues of~$A$,
we can determine the optimal value
of the complex acceleration parameter, optimal in the sense of
minimizing the error reduction factor of the Richardson iteration.
For the finite element problem on quasiuniform
triangulations with maximal mesh-size~$h$,
the basic Richardson method converges slowly, with the error in the $n$th
iterate bounded by $(1-ch^2)^n$, for $c>0$ depending on~$z$, but with
the convergence rate improving with growing $|z|$.  
We also study preconditioned versions of this method, first using 
the special preconditioner~$B_z=(\mu_zI+A)^{-1}$, where
$\mu_z>-\lambda_1(A)$, which may be analyzed in the same
way as the basic method, and we show that the error reduction factor is
bounded away from~$1$ as~$\lambda_N\to\infty$.
We then consider a general preconditioner~$B_z$, and prove geometric
convergence in the norm~$\db{v}:=(B_z^{-1}v,v)^{1/2}$, where the 
acceleration parameter is defined in terms of bounds for the spectrum
of~$B_z(\mu_zI+A)$.

In Section~\ref{sec3} we  analyze a CG method, which does not involve
choosing an acceleration parameter. Generalizing the usual 
convergence analysis to allow the complex shift of~$A$ in~\eqref{1.0},
we show geometric convergence of the iterates $w_n$,
which follows from the error bound
\begin{equation}\label{1.errbd}
\tb{w_n-w}\le \frac{\sec(\frac12\arg z)}{
|T_n(s_z)|}\,
\tb{w_0-w},
\with  s_z:=\frac{\la_1+\la_N+2z}{\la_N-\la_1},
\end{equation}
where $\tb{v}^2:=|z|\|v\|^2+(Av,v)$ and
$T_n$ is the Tchebyshev polynomial of degree~$n$,
and where $\lambda_j=\lambda_j(A)$.  Since 
$T_n(s_z)=\tfrac12(\eta_z^{n}+\eta_z^{-n})$ with $|\eta_z|<1$, this
indicates geometric convergence with rate~$|\eta_z|^n$.  For the finite
element problem discussed above, we find that $|\eta_z|\le 1-ch$ 
with~$c>0$ depending on~$z$, giving a better convergence rate than
Richardson iteration.

If the equation is preconditioned with 
$B_z=(\mu_zI+A)^{-1}$, for appropriate~$\mu_z$, and if we let
$\wt z=(z-\mu_z)^{-1}$, then 
the preconditioned equation is equivalent to $\wt z w+B_zw=\wt zB_z g$,
which again has the form~\eqref{1.0}, and
a similar convergence result holds with an error reduction factor bounded
away from~$1$ as~$\lambda_N\to\infty$.

It is natural to consider more general
preconditioners also for the CG iteration. The preconditioned equation
$zB_zw+B_zAw=B_zg$ is again equivalent to an equation of the form~\eqref{1.0},
namely, $zv+B_z^{1/2}AB_z^{-1/2}v=B_z^{1/2}g$, where $v=B_z^{1/2}w$
and the transformed operator~$B_z^{1/2}AB_z^{-1/2}$ is Hermitian and 
positive-definite with respect to the inner produce $[v,w]=(B_z^{-1}v,w)$.  
However, computing the action of~$B_z^{\pm1/2}$ will usually be costly, 
so we instead work with the preconditioned equation in its original form.  
Although the error is still optimal in a certain
sense, we are not able to show
a precise error bound of the type~\eqref{1.errbd}.

Section~\ref{sec: implementation} develops the algorithmic 
implementation of the CG method. For the basic method, 
the successive iterates satisfy a three term recursion relation.
The same is true of the preconditioned method for the special
choice~$B_z=(\mu_zI+A)^{-1}$, but not necessarily for a more general
preconditioner.

Use of an iterative solver means that we compute an 
approximation~$\wt w_h(z_j)$ in place of the true finite element 
solution~$w_h(z_j)$, so that in place of~\eqref{1.dissol} we obtain
\[
\wt U_{q,h}(t):=\frac{k}{2\pi i}\sum_{j=-q}^q e^{z_jt}\wt w_h(z_j)z_j'.
\]
If $\|\wt w_h(z_j)-w_h(z_j)\|\le \ep_j$, then 
\begin{equation}\label{eq: E(t)}
\E(t):=\|\wt U_{q,h}(t)-U_{q,h}(t)\|
\le\frac{k}{2\pi}\sum_{j=-q}^q \ep_j e^{x_j t}\,|z_j'|,
\end{equation}
and we may use this estimate as the basis for a stopping criterion.
In view of the error estimate~\eqref{1.err} we see that it is
desirable to choose the solver tolerance~$\ep_j$ in such a way that 
$\E(t)\le C(h^2+e^{-cq/\log q})$.  The presence of the 
factor~$e^{x_jt}|z_j'|$ allows $\ep_j$ to increase with~$|j|$; see
\eqref{eq: epsilon_j delta} below and remember that $x_j<0$.

In the final Section \ref{sec5} we illustrate our error analysis
by numerical calculations in a concrete case of \eqref{1.heat},
and discuss how to choose the parameters to balance the contributions
to the error of the discretizations in space and time and in
the iterative procedure.
\section{Iteration  algorithms of Richardson type}
\label{sec: Richardson}
 
We now assume, as in \eqref{1.0},
that $A$ is a  positive definite Hermitian operator 
in a finite-dimensional complex inner product space~$V$,
with extremal eigenvalues $\lambda_1=\lambda_1(A)$ 
and $\lambda_N=\lambda_N(A)$, and for brevity put $A_z:=zI+A$.
In this section, following \cite{SST03},
we consider first the basic Richardson iteration 
with acceleration parameter~$\al\in \C$, applied to~$A_zw=g$,
\begin{equation}
\label{6.5}
w^{n+1}=(I-\alpha A_z)w^n+\alpha g,\for n\ge0,
\with w^0\ \text{given}.
\end{equation}
The error reduction in each time step is then described by
the inequality
$$
\|w^{n+1}-w\|\le \|I-\alpha A_z\|\,\|w^{n}-w\|,
$$
and since $A_z$ is a normal operator in~$V$,
\begin{equation}\label{2.norm0}
\|I-\alpha A_z\|=\max_{\la\in \si(A)}|1-\alpha(z+\la)|.
\end{equation}

In~\eqref{6.5}, in addition to choosing $w^0$, the issue is
to select $\alpha\in\C$ so that
the norm in~\eqref{2.norm0} is as small as possible.
For $z=0$, as is well known,
the optimal choice of~$\alpha$ is $2/(\la_1+\la_N)$,
which gives
\begin{equation*}\label{2.z0}
\|I-\alpha A\|= \frac{\ka(A)-1}{\ka(A)+1},
\where \ka(A):=\frac{\la_N}{\la_1}.
\end{equation*}
When $A=L_h$ is based on a quasi-uniform family of triangulations $\T_h$ 
we have $\ka(A)=O(\la_N )=O(h^{-2})$ and hence,
 in this case,
\begin{equation}\label{sa}
\|I-\alpha A\|\le 1-ch^2,\with c>0.
\end{equation}

To determine an optimal $\al$ in \eqref{2.norm0}, we shall have use for
the following lemma.

\begin{lemma}\label{l2.1}
Let $\a$, $\b\in\C$ be nonproportional, and 
$[\a,\b]\subset\C$ the line segment with endpoints $\a$~and $\b$. Set
\begin{equation*}\label{2.Fdef}
F(\al):= \max_{\la\in[\a,\b]} | 1- \alpha  \la|,\where \al\in\C. 
\end{equation*}
Then $F(\al)<1$ for suitable $\al$, and $F(\al)$ is minimized by
\begin{equation*}\label{2.6}
\al=\frac{1}{\c+s\d},\quad\text{where $\c:=\tfrac12(\a+\b)$, $\d:=i(\b-\a)$,}
\end{equation*}
and where $s\in \R$ minimizes the real rational function
\begin{equation*}\label{2.7}
R(s):=\Bigl|1-\frac{\a}{\c+s\d}\Bigr|^2
=\frac{|\d|^2s^2+2s\Re\bigl((\c-\a)\dbar)+|\c-\a|^2}
{|\d|^2s^2+2s\Re(\c\dbar)+|\c|^2}.
\end{equation*}
The minimizing value of~$s$ is given,
with the $\pm$ sign being that  of~$\Re(\a\dbar)$, by
\[
s_{\min}=-\ff_1\pm\sqrt{\ff_1^2-\ff_2},
\]
where
\[
\ff_1:=\frac{2\Re(\a\cbar)-|\a|^2}{2\Re(\a\dbar)}
\quad\text{and}\quad
\ff_2:=\frac{2\ff_1\Re(\c\dbar)-|\c|^2}{|\d|^2}.
\]
\end{lemma}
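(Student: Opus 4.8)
The plan is to reduce the problem of minimizing the two‑variable function $F(\al)=\max_{\la\in[\a,\b]}|1-\al\la|$ to a one‑variable minimization, then to solve that remaining one‑variable problem explicitly. First I would parametrize the line segment as $\la=\la(t)=\c+\tfrac12 t\,i^{-1}\d$... more conveniently, write a generic point of $[\a,\b]$ in terms of the midpoint $\c=\tfrac12(\a+\b)$ and the direction $\b-\a=-i\d$, so that the segment is $\{\c+s\d' : \cdot\}$ for a suitable real parameter; the key algebraic fact is that $\la$ runs over a segment, so $1-\al\la$ traces out a segment in $\C$, and its modulus is maximized at one of the two endpoints $1-\al\a$ or $1-\al\b$. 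Hence $F(\al)=\max(|1-\al\a|,|1-\al\b|)$, a maximum of two moduli rather than a maximum over a continuum.

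Next I would argue that at the optimal $\al$ the two endpoint values are balanced, $|1-\al\a|=|1-\al\b|$: if one strictly exceeds the other, one can perturb $\al$ to decrease the larger without raising the smaller past it, since the gradients of $|1-\al\a|^2$ and $|1-\al\b|^2$ in the real variables $(\Re\al,\Im\al)$ are not parallel (this uses nonproportionality of $\a,\b$). The locus $|1-\al\a|=|1-\al\b|$ is, after expanding, a real‑linear condition on $\al$, and I would show it is exactly the line $\al=1/(\c+s\d)$ as $s$ ranges over $\R$ — the point being that $\c+s\d$ sweeps out the line through $\c$ perpendicular to the segment's direction, and its reciprocal image under $\al\mapsto 1/\al$ is precisely the balance locus. (One must check that $F<1$ is achievable: taking $\al$ real and small and positive keeps $|1-\al\la|<1$ when $\Re\la>0$ on the segment; more generally nonproportionality guarantees $0$ is not on the line through $\a,\b$ in the relevant configuration, but in any case a short argument gives a feasible $\al$.) Restricting to this line, $F(\al)^2$ becomes the single rational function $R(s)=|1-\a/(\c+s\d)|^2$, whose displayed numerator and denominator are obtained by writing $|\c+s\d|^2=|\d|^2s^2+2s\Re(\c\dbar)+|\c|^2$ and $|\c-\a+s\d|^2=|\d|^2s^2+2s\Re((\c-\a)\dbar)+|\c-\a|^2$.

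Finally I would minimize $R(s)$ by calculus: setting $R'(s)=0$ clears to the quadratic obtained from the numerator of $R'$, namely (after cancelling the common $|\d|^2$‑leading factors) a quadratic $s^2+2\ff_1 s+\ff_2=0$ with the stated $\ff_1,\ff_2$; the coefficient identification comes from equating the cross‑multiplied derivative $N'D-ND'=0$, where $N,D$ are numerator and denominator of $R$, and reading off that the $s^2$ coefficient is proportional to $\Re((\c-\a)\dbar)\cdot|\d|^2 - |\d|^2\Re(\c\dbar)=-|\d|^2\Re(\a\dbar)$, etc. Then $s_{\min}=-\ff_1\pm\sqrt{\ff_1^2-\ff_2}$, and the correct root is the one giving the smaller value of $R$; a sign analysis of $R$ at $s\to\pm\infty$ (where $R\to1$) versus at the critical points shows the minimizing root is selected by the sign of $\Re(\a\dbar)$, which is the sign of the leading coefficient in $R'$'s numerator and hence controls which critical point is the minimum rather than the maximum.

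The main obstacle I anticipate is not any single computation but the bookkeeping in the last step: correctly tracking signs so as to justify that the $\pm$ in $s_{\min}$ is the sign of $\Re(\a\dbar)$, and ruling out degenerate cases (e.g. $\Re(\a\dbar)=0$, or $\ff_1^2-\ff_2<0$) — though nonproportionality of $\a,\b$ together with the existence of a feasible $\al$ with $F(\al)<1$ should exclude the bad cases. The conceptual reduction (segment $\to$ two endpoints $\to$ balance locus $\to$ one rational function) is the part that makes the explicit formula believable; everything after is a determined quadratic.
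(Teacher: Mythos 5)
Your proposal is correct and follows essentially the same route as the paper's proof: reduce $F(\al)$ to $\max(|1-\al\a|,|1-\al\b|)$, argue that at the optimum the two endpoint moduli are equal so that $1/\al$ lies on the perpendicular bisector $\c+s\d$, and then minimize the resulting rational function $R(s)$ by solving the quadratic arising from $R'(s)=0$, with the sign of $\Re(\a\dbar)\ne0$ (guaranteed by nonproportionality) determining via the behavior of $R$ at $s\to\pm\infty$ which root is the minimum. The paper does the last step by computing $R'(s)$ explicitly as $2\Re(\a\dbar)|\d|^2(s^2+2\ff_1 s+\ff_2)$ over a positive denominator, which matches your sketch.
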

\begin{proof}
We first note that $\al$ may be chosen so that $F(\al)<1$.  In fact,
we may  first rotate the line segment $[\a,\b]$ around the origin so 
that it becomes parallel to and to the right of
the imaginary axis, which determines $\arg\al$,
and then  shrink the line segment thus rotated so
that it comes inside the disk~$|z-1|<1$, giving  $|\al|$.

For $\al$ to be optimal, we must have
$|1-\al\a|=|1-\al\b|$, and thus also
$|1/\al-\a|=|1/\al-\b|$. Therefore, $1/\al$
has to be chosen on the line in~$\C$ through
the midpoint $\c=\tfrac12(\a+\b)$, which is perpendicular to~$\b-\a$, 
or has the direction of~$\d=i(\b-\a)$, so that
$1/\al=\c+s\d$, or $\al=1/(\c+s\d)$, with~$s\in \R$, and 
$R(s)=\bigl|F\bigl(\alpha(s)\bigr)\bigr|^2$.  

Since $\d\ne0$ we have $R(s)\to1$ as~$s\to\pm\infty$, and if
$\Re(\a\dbar)>0$ (${}<0$, respectively) then $R(s)<1$ (${}>1$, respectively)
for large~$s>0$.  Note that since $\a=\a_1+i\a_2$ and $\b=\b_1+i\b_2$ are 
nonproportional, we have
$\Re(\a\dbar)=-\Re\bigl(i\a(\bbar-\abar)\bigr)=
-\Re(i\a\bbar)=\a_2\b_1-\a_1\b_2\ne0$.
A simple calculation shows
\[
R'(s)=\frac{2\Re(\a\dbar)|\d|^2(s^2+2\ff_1s+\ff_2)}
{(s^2|\d|^2+ 2s\Re(\cbar\d)+ |\c|^2)^2},
\]
so that $R(s)$ has just one  maximum and one minimum,
with the maximum to the left of the minimum if and only
if $\Re(\a\dbar)>0$.
\end{proof}

We are now ready to show the following.

\begin{theorem}\label{th1}
Let $z=x+iy$ with $\arg z\in (-\pi,\pi)$ and determine $\alpha=\al_z$ by
taking $\a=z+\la_1$ and $\b=z+\la_N$ in Lemma~\ref{l2.1}.  Then, 
for~$\lambda_N$ sufficiently large, the error reduction factor in~\eqref{6.5}
satisfies
\[
\ep_z:=\min_\al \| I - \alpha A_z\|=|1-\al_z\a|\le
1-c\la_N^{-1},\with c=c(z,\lambda_1)>0.
\]
\end{theorem}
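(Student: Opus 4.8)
The plan is to bypass the explicit formula for $s_{\min}$ in Lemma~\ref{l2.1} and instead use the geometry behind it together with a single, generally suboptimal, choice of the acceleration parameter. I would write $\a=z+\la_1$, $\b=z+\la_N$, $p:=\Re\a=\Re z+\la_1$, and $L:=\tfrac12(\la_N-\la_1)$, so that $[\a,\b]$ is the horizontal segment of points $\la+iy$ with $p\le\la\le p+2L$, and $L\to\infty$ as $\la_N\to\infty$. The first step is a routine reduction: since $A_z$ is normal, \eqref{2.norm0} together with the convexity of $\la\mapsto|1-\al(z+\la)|^2$ on $[\la_1,\la_N]$ (a quadratic with nonnegative leading coefficient, hence maximal at an endpoint) gives $\|I-\al A_z\|=\max_{\la\in\si(A)}|1-\al(z+\la)|=F(\al)$, with $F$ as in Lemma~\ref{l2.1}, because $\la_1,\la_N\in\si(A)\subset[\la_1,\la_N]$. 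Hence $\ep_z=\min_\al F(\al)=F(\al_z)$, and since the optimal $\al_z$ of Lemma~\ref{l2.1} satisfies $|1-\al_z\a|=|1-\al_z\b|$, this common value is $F(\al_z)$, so $\ep_z=|1-\al_z\a|$, as asserted. (When $y=\Im z=0$ one necessarily has $\Re z>0$, so $\a,\b$ are proportional and Lemma~\ref{l2.1} does not strictly apply; that is the classical case $\ep_z=L/(p+L)$, which is subsumed by the computation below with $\tau=0$.)

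It then suffices to bound $\min_\al F(\al)$ from above by evaluating $F$ at one convenient parameter. Taking $\al=1/w$ with $w$ on the perpendicular bisector of $[\a,\b]$, that is $w=(p+L)+i\tau$ with $\tau\in\R$, one has $|1-\al\a|=|w-\a|/|w|=|1-\al\b|$, and, the maximum over the segment being again attained at an endpoint,
\[
F(\al)^2=\frac{|w-\a|^2}{|w|^2}=\frac{L^2+(\tau-y)^2}{(p+L)^2+\tau^2}.
\]
If $p>0$ (in particular whenever $\Re z\ge0$), I would take $\tau=y$, i.e.\ $\al=2/(\a+\b)$: then $F(\al)^2\le L^2/(p+L)^2$, hence $F(\al)\le 1-p/(p+L)\le 1-p\,\la_N^{-1}$ once $\la_N$ is large enough that $p+L\le\la_N$, and one may take $c=p=\Re z+\la_1>0$.

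If $p\le0$, then $\arg z\in(-\pi,\pi)$ forces $y\ne0$, and by symmetry (replacing $z$ by $\bar z$) we may assume $y>0$. Here $\tau=y$ gives $F\ge1$, so $w$ must be pushed off the line $\Im=y$; I would take $\tau=\si_0L$ with the constant $\si_0:=1+2|p|/y>0$, tuned so that $p+\si_0y=|p|+y>0$. Substituting,
\[
F(\al)^2=1-\frac{2(|p|+y)L+(p^2-y^2)}{(1+\si_0^2)L^2+2pL+p^2},
\]
and for $\la_N$, hence $L$, large the subtracted fraction is positive and bounded below by $c_1/L$ with $c_1:=(|p|+y)/(1+\si_0^2)>0$, so $F(\al)\le\sqrt{1-c_1/L}\le 1-c_1/(2L)\le 1-c_1\,\la_N^{-1}$, using $2L=\la_N-\la_1\le\la_N$. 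Combining the two cases, $\ep_z=\min_\al F(\al)\le 1-c\,\la_N^{-1}$ with $c=c(z,\la_1)>0$ for $\la_N$ sufficiently large.

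The one genuinely delicate point is exactly the regime $p=\Re z+\la_1\le0$: the real parameter $\al=2/(\a+\b)$ no longer works, and the tilt $\si_0$ must be chosen large relative to $|p|/y$ so that the coefficient of $L$ in $1-F(\al)^2$ is strictly positive. This is also where the dependence of $c$ on $z$ is forced, since $c_1\to0$ as $y\to0$ with $p<0$ fixed, i.e.\ as $\arg z\to\pm\pi$, so no estimate uniform in $\arg z\in(-\pi,\pi)$ is available. A more tedious alternative would be to insert the exact $s_{\min}$ of Lemma~\ref{l2.1} and expand $R(s_{\min})$ in powers of $\la_N^{-1}$; the suboptimal choice above avoids that algebra.
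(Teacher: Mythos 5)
Your proof is correct, but it takes a genuinely different route from the paper's. The paper's proof keeps the exact optimal parameter~$\al_z$ from Lemma~\ref{l2.1} and expands $\ff_1$, $\ff_2$, $s_{\min}$ and hence $\al_z$ and $|1-\al_z\a|^2$ asymptotically in powers of~$\lambda_N^{-1}$, arriving at $|1-\al_z\a|^2=1-\beta\la_N^{-1}+O(\la_N^{-2})$ with an explicit leading coefficient~$\beta(z,\la_1)>0$. You instead observe that since $\ep_z$ is a minimum over~$\al$, it suffices to evaluate~$F$ at one explicit, generally suboptimal~$\al$ on the perpendicular bisector of~$[\a,\b]$: $\al=2/(\a+\b)$ when $\Re z+\la_1>0$, and a tilted choice~$1/\al=(p+L)+i\si_0L$ with $\si_0=1+2|p|/y$ when $\Re z+\la_1\le0$. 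This avoids the algebra of~$s_{\min}$ entirely and gives a short, direct $O(\la_N^{-1})$ bound; the price is that you do not recover the sharp leading coefficient nor an asymptotic description of the optimal~$\al_z$, both of which the paper's expansion provides (and which are used in Table~\ref{tab1} for the numerics). Both arguments correctly identify the only delicate regime as~$\Re z+\la_1\le0$; you make it explicit via the two-case split, whereas the paper's treatment hides it in the sign convention for~$s_\pm$. Both proofs likewise pass over the degenerate case~$y=0$ (where $\a$~and $\b$ are proportional and Lemma~\ref{l2.1} does not literally apply), which you acknowledge in a parenthesis; since $\arg z\in(-\pi,\pi)$ forces~$\Re z>0$ there, the classical real estimate covers it, so this is not a gap in either argument.
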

\begin{proof}
With the notation of Lemma \ref{l2.1} we have, for~$\la_N\to\infty$, 
\[
2\Re(\a\cbar)=(x+\la_1)\la_N+O(1),\quad 
\Re(\a\dbar) =y\la_N+O(1),\quad
\Re(\c\dbar) =y\la_N+O(1).
\]
Hence, putting $s_\pm:=-\big(-x-\la_1\pm\sqrt{(x+\la_1)^2+y^2}\big)/(2y)$,
\[
\ff_1=\frac{x+\la_1}{2y}+O(\lambda_N^{-1}),\quad 
\ff_2=-\frac14+O(\lambda_N^{-1}),\qquad
s_{\min}=s_\pm+O(\lambda_N^{-1}),
\]
and it follows by Lemma~\ref{l2.1} that 
$\al_z=(\frac12+is_\pm)^{-1}\lambda_N^{-1}+O(\lambda_N^{-2})$.  Therefore,
\[
|1-\al_z\a|^2=1-2\Re(\al_z\a)+|\alpha_z\a|^2
	=1-\beta\la_N^{-1}+O(\lambda_N^{-2}),
\]
where
\begin{align*}
\beta&=\beta(z,\lambda_1)
	=2\Re\biggl(\frac{x+\lambda_1+iy}{\tfrac12+is_\pm}\biggr)\\
	&=\frac{x+\lambda_1+2ys_\pm}{\tfrac14+s_\pm^2}
	=\pm\frac{\sqrt{(x+\lambda_1)^2+y^2}}{2y(\tfrac14+s_\pm^2)}>0,
\end{align*}
since the sign in $\pm$ is that of $y$, and the desired estimate follows
for~$\lambda_N$ sufficiently large.
\end{proof}

When, as above $A=L_h$, with~$\{\T_h\}$
quasiuniform, so that $\la_N\approx ch^{-2}$, 
the error bound is  of the same form as in \eqref{sa},
except that now the constant $c$ depends on $z.$ 

The rate of convergence shown in Theorem \ref{th1}
is too slow for the iteration to be of practical use.
In Table~\ref{tab1}, we show the values of the 
parameter~$\alpha=\rho e^{-i\varphi}$ and
the error reduction factor~$\ep_z$ given by Theorem~\ref{th1},
with~$z=z_j$ on the hyperbola $(x-1)^2-y^2=1$, for even
$j$ in the range $0\le j\le q=20$.  Here,
the operator~$A$ is from the model problem described in
Section~\ref{sec5}, for which $\lambda_1\approx1$
and $\lambda_N\approx4,000$.  
\medskip

One way to improve the convergence  of the iterative method \eqref{6.5},
considered briefly in \cite{SST03},
is to precondition the linear system by multiplication by a positive definite
Hermitian operator~$B_z$, which, in contrast to the choice
in~\cite{SST03},  we here allow to depend on $z$.  Rewriting
\eqref{1.0} as
\begin{equation}\label{2.10}
G_zw= \wt g_z:=B_zg, \where \ 
G_z:=B_zA_z,
\end{equation}
the Richardson iteration algorithm becomes
\begin{equation}\label{2.2}
w^{n+1}=(I-\al G_z)w^n+\al \wt g_z.
\end{equation}

We first  consider the special preconditioner~$B_z=(\mu_zI+A)^{-1},$ 
where $\mu_z>-\lambda_1$.
One could choose, for example, $\mu_z=0$, as in \cite{SST03}, or
$\mu_z=|z|$. For $\mu_z=0$ we have $B_z=A^{-1}$, independently of $z$,
and for $\mu_z=|z|, \ G_z$ is bounded in $z$.
Since
\begin{equation}\label{2.1}
G_z=G_z(A,\mu_z)=
(\mu_zI+A)^{-1}(zI+A),
\end{equation}
the error reduction is now measured by
\begin{equation}\label{2.3}
\|I-\al G_z(A,\mu_z)\|=\max_{\la\in\si(A)}
|1-\al G_z(\la,\mu_z)|,
\quad G_z(\la,\mu_z)=\frac{z+\la}{\mu_z+\la},
\end{equation}
and we want to choose $\al$ so that this quantity is as small
as possible.

\begin{theorem}\label{th2.3} 
Let $z=x+iy$ with $\arg z\in (-\pi,\pi)$, let $\mu_z>-\la_1$,
and determine $\alpha=\al_z$ by
taking $\a=G_z(\la_1,\mu_z)$ and $\b=G_z(\la_N,\mu_z)$ in Lemma~\ref{l2.1}.  
Then the error reduction
factor  in \eqref{2.2} is bounded independently of $\la_N$ by
\begin{equation}\label{2.wtep}
\wt\ep_z:=\|I-\al_z G_z(A,\mu_z)\|=|1-\al_z\a|\le c(z,\lambda_1,\mu_z)<1.
\end{equation}
\end{theorem}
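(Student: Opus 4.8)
The plan is to reduce everything to Lemma~\ref{l2.1} by observing that, in contrast with the situation of Theorem~\ref{th1}, the spectrum of the preconditioned operator $G_z(A,\mu_z)$ lies on a \emph{genuine line segment}, and then to compare that segment with a fixed one that does not involve~$\lambda_N$.

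First I would rewrite, as in \eqref{2.3},
$G_z(\la,\mu_z)=\dfrac{z+\la}{\mu_z+\la}=1+\dfrac{z-\mu_z}{\mu_z+\la}$, and note that since $\mu_z>-\la_1$ the factor $\mu_z+\la$ runs over the \emph{positive} interval $[\mu_z+\la_1,\mu_z+\la_N]$ as $\la$ runs over $[\la_1,\la_N]\supseteq\si(A)$.  Hence $G_z(\cdot,\mu_z)$ maps $[\la_1,\la_N]$ onto the line segment $[\a,\b]$ contained in the line $\ell=\{1+t(z-\mu_z):t\in\R\}$, with $\a=G_z(\la_1,\mu_z)$ and $\b=G_z(\la_N,\mu_z)$ corresponding to the parameters $t_\a=(\mu_z+\la_1)^{-1}\ge t_\b=(\mu_z+\la_N)^{-1}>0$.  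In particular $\si\bigl(G_z(A,\mu_z)\bigr)\subseteq[\a,\b]$, and since $t=0$ gives the point $1\in\ell$ (the limit of $G_z(\la,\mu_z)$ as $\la\to\infty$), we obtain the nesting $[\a,\b]\subseteq[\a,1]$.  For $z\notin\R$ the direction $z-\mu_z$ is non-real, so $\ell$ avoids the origin; consequently no two points of $\ell$ are proportional, and in particular $\a,\b$ are nonproportional and so are $\a,1$.  (The case of real $z>0$ is classical: $G_z(A,\mu_z)$ is then Hermitian positive definite with $\kappa\bigl(G_z(A,\mu_z)\bigr)\le(z+\la_1)/(\mu_z+\la_1)\cdot$const bounded as $\la_N\to\infty$, so standard Richardson theory already gives the claim.)

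With $\a=G_z(\la_1,\mu_z)$ and $\b=G_z(\la_N,\mu_z)$, Lemma~\ref{l2.1} applies and yields the $\alpha=\al_z$ of the theorem, minimizing $F(\al)=\max_{\la\in[\a,\b]}|1-\al\la|$ with $|1-\al_z\a|=|1-\al_z\b|=F(\al_z)<1$.  Because $w\mapsto|1-\al_z w|^2$ is convex on $\C\cong\R^2$, its maximum over the segment $[\a,\b]$ is attained at an endpoint; together with $\si\bigl(G_z(A,\mu_z)\bigr)\subseteq[\a,\b]$ and $\a,\b\in\si\bigl(G_z(A,\mu_z)\bigr)$ (as $\la_1,\la_N\in\si(A)$) and the normality of $G_z(A,\mu_z)$ used in~\eqref{2.3}, this gives
\[
\wt\ep_z=\|I-\al_zG_z(A,\mu_z)\|=\max_{\la\in\si(A)}\bigl|1-\al_zG_z(\la,\mu_z)\bigr|=\max_{w\in[\a,\b]}|1-\al_z w|=|1-\al_z\a|,
\]
which is the asserted identity in~\eqref{2.wtep}.

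It remains to bound $\wt\ep_z$ independently of $\la_N$, and this is the crux.  By the optimality of $\al_z$ and the nesting $[\a,\b]\subseteq[\a,1]$ established above,
\[
\wt\ep_z=\min_{\al\in\C}\max_{w\in[\a,\b]}|1-\al w|\le\min_{\al\in\C}\max_{w\in[\a,1]}|1-\al w|=:c(z,\la_1,\mu_z),
\]
and a second application of Lemma~\ref{l2.1}, now to the nonproportional pair $\a$ and $1$, shows $c(z,\la_1,\mu_z)<1$.  Since $\a=G_z(\la_1,\mu_z)=(z+\la_1)/(\mu_z+\la_1)$ depends only on $z,\la_1,\mu_z$, the constant $c$ is independent of $\la_N$, proving~\eqref{2.wtep}.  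I expect the only real obstacle to be the geometric observation that the preconditioned spectrum lies exactly on the segment $[\a,\b]$, which collapses into the fixed segment $[\a,1]$ as $\la_N\to\infty$; once that picture is in place, both the identity for $\wt\ep_z$ and the uniform bound are immediate consequences of Lemma~\ref{l2.1}.
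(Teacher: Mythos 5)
Your proof is correct and takes essentially the same route as the paper's: observe that $G_z(\la,\mu_z)=1+(z-\mu_z)/(\mu_z+\la)$ maps $[\la_1,\la_N]$ onto a subsegment of the fixed line segment $[\a,1]$ (because $G_z(\la_N,\mu_z)\to 1$ as $\la_N\to\infty$), then apply Lemma~\ref{l2.1} to the fixed segment $[\a,1]$ to get a bound independent of $\la_N$. The paper's proof is terser, leaving implicit the normality of $G_z(A,\mu_z)$, the convexity argument showing the maximum is attained at an endpoint, and the monotonicity of min-max under set inclusion that you spell out; your extra care over the degenerate real-$z$ case (where $\a$, $\b$ are proportional and Lemma~\ref{l2.1} does not literally apply) is a small genuine refinement the paper glosses over, but it does not change the argument in any essential way.
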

\begin{proof}
We note that 
\[
G_z(\la,\mu_z)=1+\frac{z-\mu_z}{\mu_z+\la}\in
	\bigl[G_z(\la_1,\mu_z), G_z(\la_N,\mu_z)\bigr], 
	\quad\text{for $\la\in[\la_1,\la_N]$,}
\]
and that $G_z(\la_N,\mu_z)\to 1$ as $\la_N\to\infty$.  Thus,
$G_z(\la,\mu_z)\in[\a,1]$ for all $\la\in\si(A)$, and
since this is a fixed line segment, Lemma \ref{l2.1} shows the
theorem.
\end{proof}

Since, for $z$, $\lambda_1$ and $\lambda_N$ given, the factor~$\wt\ep_z$
is an explicit, albeit complicated, function of~$\mu_z$, it is natural
to choose $\mu_z$ as the value that minimizes this function.  The
numerical values of~$\mu_z$ used in this section were determined in
this way, via an optization routine,
\texttt{scipy.optimize.fminbound}~\cite{scipy}, 
based on a well-known algorithm due to Brent that does not require 
derivative values.  We obtained almost identical results, not shown here,
by setting $b=1$, corresponding to~$\lambda_N=\infty$.

In Table~\ref{tab1}, we see the dramatic effect of the 
preconditioner~$B_z=(\mu_zI+A)^{-1}$ on the
error reduction factor, in the case of the model problem from 
Section~\ref{sec5}, with~$z=z_j$.  
Notice that $\wt\ep_z$ increases with~$j$, whereas $\ep_z$
decreases.

\begin{table}
\renewcommand{\arraystretch}{1.2}
\begin{center}
\caption{Richardson iteration with $\alpha=\rho e^{-i\varphi}$, and
preconditioning with $B_z=(\mu_zI+A)^{-1}$.}
\label{tab1}
\begin{tabular}{rrr|rrr|rrrr}
\multicolumn{3}{c|}{}&
\multicolumn{3}{c|}{Theorem~\ref{th1}}&
\multicolumn{4}{c}{Theorem~\ref{th2.3}}\\
\hline
\multicolumn{1}{c}{$j$}  &
\multicolumn{1}{c}{$x_j$}  &
\multicolumn{1}{c|}{$y_j$}  &
\multicolumn{1}{c}{$\rho_z$}  &
\multicolumn{1}{c}{$\varphi_z$}  &
\multicolumn{1}{c|}{$\ep_z$}  &
\multicolumn{1}{c}{$\rho_z$}  &
\multicolumn{1}{c}{$\varphi_z$}  &
\multicolumn{1}{c}{$\mu_z$}  &
\multicolumn{1}{c}{$\wt\ep_z$}  \\
\hline 
  0&   0.00&   0.00&  4.99e-04&  -0.00& 0.9995&  1.000&  -0.00&   0.00&  0.000\\
  2&  -0.05&   0.30&  4.93e-04&   0.15& 0.9995&  0.988&   0.15&   0.00&  0.152\\
  4&  -0.18&   0.64&  4.73e-04&   0.33& 0.9995&  0.947&   0.33&   0.03&  0.321\\
  6&  -0.43&   1.02&  4.31e-04&   0.53& 0.9996&  0.864&   0.53&   0.16&  0.503\\
  8&  -0.81&   1.51&  3.76e-04&   0.72& 0.9996&  0.753&   0.72&   0.51&  0.658\\
 10&  -1.35&   2.12&  3.24e-04&   0.86& 0.9995&  0.650&   0.86&   1.14&  0.760\\
 12&  -2.10&   2.93&  2.85e-04&   0.96& 0.9995&  0.571&   0.96&   2.11&  0.821\\
 14&  -3.13&   4.01&  2.58e-04&   1.03& 0.9994&  0.516&   1.03&   3.52&  0.856\\
 16&  -4.54&   5.45&  2.39e-04&   1.07& 0.9993&  0.478&   1.07&   5.47&  0.878\\
 18&  -6.45&   7.38&  2.25e-04&   1.10& 0.9991&  0.451&   1.10&   8.15&  0.892\\
 20&  -9.02&   9.97&  2.16e-04&   1.12& 0.9988&  0.432&   1.12&  11.78&  0.902\\
\hline
 21& -20.00&  20.00&  1.98e-04&   1.17& 0.9978&  0.395&   1.17&  26.56&  0.919\\
\end{tabular}
\end{center}
\end{table}

\begin{table}
\renewcommand{\arraystretch}{1.2}
\begin{center}
\caption{Preconditioned Richardson iterations using Theorem~\ref{2t.prec}.}
\label{tab2}
\begin{tabular}{r|rrr|rrr|rrr}
&\multicolumn{6}{c|}{$B_z=(\mu_zI+A)^{-1}$} &
\multicolumn{3}{c}{Incomplete Cholesky}\\
\hline
\multicolumn{1}{c|}{$j$}  &
\multicolumn{1}{c}{$\rho_z$}  &
\multicolumn{1}{c}{$\varphi_z$}  &
\multicolumn{1}{c|}{$\wh\ep_z$}  &
\multicolumn{1}{c}{$\rhosz$}  &
\multicolumn{1}{c}{$\varphisz$}  &
\multicolumn{1}{c|}{$\epsilonsz$} &
\multicolumn{1}{c}{$\rho_z$}  &
\multicolumn{1}{c}{$\varphi_z$}  &
\multicolumn{1}{c}{$\wh\ep_z$}  \\
\hline
 0&  1.000&   0.00&  0.000&  1.000&   0.00&  0.000&  0.643&   0.00&  0.997\\
 2&  0.510&   0.55&  0.751&  0.988&   0.15&  0.152&  0.606&   0.23&  0.997\\
 4&  0.335&   0.73&  0.866&  0.947&   0.33&  0.321&  0.554&   0.40&  0.997\\
 6&  0.226&   0.89&  0.926&  0.864&   0.53&  0.503&  0.479&   0.60&  0.998\\
 8&  0.160&   1.03&  0.958&  0.753&   0.72&  0.658&  0.391&   0.79&  0.998\\
10&  0.122&   1.12&  0.973&  0.650&   0.86&  0.760&  0.314&   0.94&  0.998\\
12&  0.100&   1.19&  0.981&  0.571&   0.96&  0.821&  0.256&   1.04&  0.998\\
14&  0.087&   1.23&  0.985&  0.516&   1.03&  0.856&  0.213&   1.11&  0.998\\
16&  0.079&   1.26&  0.988&  0.478&   1.07&  0.878&  0.182&   1.15&  0.998\\
18&  0.073&   1.28&  0.989&  0.451&   1.10&  0.892&  0.157&   1.19&  0.997\\
20&  0.070&   1.29&  0.990&  0.432&   1.12&  0.902&  0.138&   1.22&  0.997\\

\end{tabular}
\end{center}
\end{table}

Since computing the action of~$(\mu_zI+A)^{-1}$ is expensive, we
now want to consider a more general preconditioner~$B_z$ (still
assumed to be positive definite and Hermitian).
Suppose first that $z=0$ and write $B=B_0$.
If $B^{-1}$ is spectrally equivalent to~$A$, that is, if
\begin{equation}
\label{2.ff}
m (B^{-1}v,v)\le (Av,v)\le M(B^{-1}v,v),\quad\forall v\in V,
\end{equation}
for some positive $m$~and $M$, then for suitable~$\alpha$ the iterative 
scheme converges geometrically with respect to a suitable energy norm.
More precisely, setting
\begin{equation}\label{2.bb}
[v,w]:=(B^{-1}v,w),\quad \db{v}:=[v,v]^{1/2},
\end{equation}
the operator~$BA$ is Hermitian with respect to $[\cdot,\cdot]$,
with eigenvalues~$\lambda_j(BA)=\lambda_j(A,B^{-1})$ in the closed
interval~$[m,M]$, so that $\ka(BA)\le M/m$, and thus
\begin{equation}\label{2.eps}
\db{I-\alpha BA}=\frac{\kappa(BA)-1}{\kappa(BA)+1}
\le \frac{M-m}{M+m}\quad\text{if}\quad
\alpha=\frac{2}{\lambda_1(BA)+\lambda_N(BA)}.
\end{equation}

In the general case of \eqref{2.10} with $z\ne0$, we shall 
write $G_z$ in the form
\begin{equation}
\label{2.Gz1}
G_z=B_zA_z=\wh z\, B_z+B_z(\mu_zI+A),\where \wh z:=z-\mu_z.
\end{equation}
We take $B_z^{-1}$ to be spectrally equivalent to~$\mu_zI+A$, replacing 
the assumption~\eqref{2.ff} by
\begin{equation}\label{2.ff1}
m_z (B_z^{-1}v,v)\le 
((\mu_zI+A)v,v)
\le M_z(B_z^{-1}v,v),\quad\forall v\in V,
\end{equation}
and define the associated inner product and norm,
\begin{equation}
[v,w]:=(B_z^{-1}v,w),\quad \db{v}:=[v,v]^{1/2},
\label{2.bb1}
\end{equation} 
which now depend on~$z$.
The operator $B_z(\mu_zI+A)$ is then Hermitian with respect to
$[\cdot,\cdot]$, with eigenvalues in the closed interval~$[m_z,M_z]$.

In \cite{SST03},  the preconditioning of (2.1) by
using an operator $B$ independent of $z$, corresponding to $\mu_z=0$,
was briefly discussed, and this turned out to be
advantageous only for small~$|z|$. Here
we shall show the following estimate in the present
more general case for the error reduction factor with respect 
to the norm~$|[\cdot]|$,
which is an improvement of the result in \cite{SST03}.
For simplicity we assume $y=\Im z>0$.

\begin{theorem}\label{2t.prec}
Consider the preconditioned equation \eqref{2.Gz1} and the corresponding
iterative scheme \eqref{2.2}. Let $\al_z$ be determined as follows:
With $\wh z=z-\mu_z$,
assume that $\zeta=\arg \wh z\in(\tfrac{1}{2}\pi,\pi)$ and that
$B_z$ satisfies~\eqref{2.ff1}. 
Let $\fy_z=-\arg\al_z$ be the value in $J:=(\zeta-\tfrac12\pi,\tfrac12\pi)$ 
that maximizes the function
\[
\nu_z(\fy):=
\frac{m_z\cos^2\fy\,\cos(\zeta-\fy)}{M_z\cos(\zeta-\fy)+\La_z\cos\fy},
\where   \La_z=|\wh z|\,\|B_z\|,
\] 
and set
$\rho_z=|\al_z|=\nu_z(\fy_z)/(m_z\cos\fy_z)$.
Then we have for the error reduction factor
\begin{equation}\label{2.est0}
\db{I-\alpha_z G_z}\le \wh\ep_{z}:= \big(1-\nu_z(\fy_z)\big)^{1/2},
\quad\al_z:=\rho_z e^{-i\fy_z}.
\end{equation}
If, in addition, there is a $\gamma_z\ge0$ such that
\begin{equation}\label{2.ga}
\Re\big(\wh z\,[B_zv,B_z(\mu_zI+A)v]\big)\le
-\ga_z
[B_zv,v],\quad\forall v\in V,
\end{equation}
we  define $\alphasz$ by choosing $\varphisz=-\arg\alphasz\in J$
to maximize the function
\begin{equation}\label{2.wt}
\nusz(\varphi):=
\frac{m_z\cos^2\fy\,\cos(\zeta-\fy)}
{\max(M_z\cos(\zeta-\fy),\Lambdasz\cos\fy)},
\where \Lambdasz:=\La_z-\frac{2\ga_z}{|\wh z|}, 
\end{equation}
and put $\rhosz=|\alphasz|=\nusz(\varphisz)/(m_z\cos\varphisz)$. 
We then have the sharper estimate
\begin{equation}\label{2.est1}
\db{I-\alphasz G_z}\le \epsilonsz:=
	\big(1-\nusz(\varphisz)\big)^{1/2},
\quad \alphasz:=\rhosz e^{-i\varphisz}.
\end{equation}
\end{theorem}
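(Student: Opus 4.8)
The plan is to estimate $\db{I-\alpha_z G_z}$ by bounding the quadratic form $\db{(I-\alpha_z G_z)v}^2$ from above in terms of $\db{v}^2$, exploiting the splitting $G_z=\wh z\,B_z+B_z(\mu_zI+A)$ from~\eqref{2.Gz1}. Writing $\alpha_z=\rho_z e^{-i\fy_z}$ and abbreviating $C_z:=B_z(\mu_zI+A)$, I would expand
\[
\db{(I-\alpha_z G_z)v}^2=\db{v}^2-2\Re\bigl(\alpha_z[G_zv,v]\bigr)+|\alpha_z|^2\db{G_zv}^2,
\]
using that $[\cdot,\cdot]=(B_z^{-1}\cdot,\cdot)$ makes $C_z$ Hermitian with spectrum in $[m_z,M_z]$, and $B_z$ Hermitian positive with $[B_zv,v]=(v,v)\le\|B_z\|^{-1}\db{v}^2$ — wait, more precisely $[B_zv,v]=(v,v)$ and $(v,v)\ge\|B_z\|^{-1}(B_z^{-1}v,v)$ is false; rather one uses $\db{B_zv}^2=(B_z v,v)\le\|B_z\|(v,v)$ and $\db{v}^2=(B_z^{-1}v,v)$. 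The key inequalities to assemble are: $\Re(\alpha_z\wh z[B_zv,v])\ge 0$ is not automatic, so instead I would write $\alpha_z\wh z=\rho_z|\wh z|e^{i(\zeta-\fy_z)}$ and note $\zeta-\fy_z\in(0,\tfrac12\pi)$ by the choice $\fy_z\in J=(\zeta-\tfrac12\pi,\tfrac12\pi)$, so $\cos(\zeta-\fy_z)>0$; similarly $\alpha_z$ has argument $-\fy_z$ with $|\fy_z|<\tfrac12\pi$ so $\cos\fy_z>0$. This is exactly why the hypothesis $\zeta\in(\tfrac12\pi,\pi)$ (equivalently $J$ nonempty) is needed.

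Concretely I would show
\[
\Re\bigl(\alpha_z[G_zv,v]\bigr)\ge \rho_z\bigl(|\wh z|\cos(\zeta-\fy_z)(v,v)+\cos\fy_z\,[C_zv,v]\bigr)
\ge \rho_z\bigl(\text{something}\bigr),
\]
bounding $[C_zv,v]\ge m_z\db{v}^2$ is not quite right either since $[C_zv,v]=((\mu_zI+A)v,v)\ge m_z(B_z^{-1}v,v)=m_z\db{v}^2$ — that one is fine. For the quadratic term, $\db{G_zv}^2\le 2|\wh z|^2\db{B_zv}^2+2\db{C_zv}^2\le 2|\wh z|^2\|B_z\|(v,v)+2M_z[C_zv,v]$, which I'd want to keep linear in $[C_zv,v]$ rather than squaring. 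The cleaner route: write $t:=[C_zv,v]/\db{v}^2\in[m_z,M_z]$ and $s:=(v,v)/\db{v}^2$, establish the two-sided relation linking $s$ and $t$ (here $\db{v}^2=(B_z^{-1}v,v)$ and $(v,v)=[B_zv,v]$, with $[C_zv,v]\le M_z\db{v}^2$ giving $t\le M_z$, and also $s\ge \Lambda_z^{-1}|\wh z|t$-type bound? no). I think the honest bookkeeping is: choose $\rho_z$ to make the coefficient of the worst-case term vanish, leading to $\db{I-\alpha_zG_z}^2\le 1-\nu_z(\fy_z)$ with $\nu_z$ as displayed, after optimizing the scalar expression over $t\in[m_z,M_z]$ (the max being attained at an endpoint by linearity/monotonicity) and recognizing $\Lambda_z=|\wh z|\,\|B_z\|$ as the coefficient that enters. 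Then maximizing over $\fy_z\in J$ gives the stated $\fy_z$, and $\rho_z=\nu_z(\fy_z)/(m_z\cos\fy_z)$ is the corresponding optimal modulus.

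For the sharper estimate~\eqref{2.est1}, the extra hypothesis~\eqref{2.ga} controls the cross term $\Re\bigl(\wh z[B_zv,C_zv]\bigr)$ that appears when one does not split $G_zv$ crudely via the triangle inequality but instead keeps $\db{G_zv}^2=|\wh z|^2\db{B_zv}^2+2\Re\bigl(\wh z[B_zv,C_zv]\bigr)+\db{C_zv}^2$. Using~\eqref{2.ga} to replace the middle term by $-2\gamma_z[B_zv,v]=-2\gamma_z(v,v)$ effectively reduces $|\wh z|^2\|B_z\|$ — i.e.\ $\Lambda_z$ — to $\Lambdasz=\Lambda_z-2\gamma_z/|\wh z|$, and the $\max(M_z\cos(\zeta-\fy),\Lambdasz\cos\fy)$ in the denominator of $\nusz$ comes from the fact that after this improvement the worst endpoint $t\in\{m_z,M_z\}$ in the scalar optimization can switch, so one takes the max of the two competing denominators. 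I would then repeat the $\rho$-optimization and $\fy$-maximization verbatim with $\Lambda_z$ replaced by $\Lambdasz$.

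The main obstacle I anticipate is the scalar optimization bookkeeping: correctly reducing $\db{(I-\alpha_zG_z)v}^2/\db{v}^2$ to a function of the single ratio $t=[C_zv,v]/\db{v}^2$ together with the auxiliary ratio $(v,v)/\db{v}^2$, controlling the latter by $\|B_z\|$ and $m_z$ consistently, and verifying that the resulting bound is maximized at an endpoint of $[m_z,M_z]$ so that the formulas for $\nu_z$ and $\nusz$ (with the $\max$ in the denominator) emerge exactly as stated. Getting the trigonometry of $\arg(\alpha_z\wh z)=\zeta-\fy_z$ versus $\arg\alpha_z=-\fy_z$ to line up with the constraint set $J$ is the conceptual heart, but it is the interplay between that sign control and the spectral bounds $m_z\le t\le M_z$ that requires care.
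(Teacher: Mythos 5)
Your overall architecture matches the paper's proof: expand $\db{(I-\alpha G_z)v}^2$, use the split $G_z=\wh z B_z+B_z(\mu_zI+A)$ from~\eqref{2.Gz1}, control signs by observing that $\fy\in J$ forces $\cos\fy>0$ and $\cos(\zeta-\fy)>0$, and for the sharper bound keep the cross term in $\db{G_zv}^2$ and hit it with~\eqref{2.ga} to lower $\La_z$ to $\Lambdasz$. All of that is on target, and you correctly anticipate why a $\max$ appears in the denominator of~$\nusz$.

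The gap is in the single step you flagged yourself but did not resolve: bounding $\db{G_zv}^2$ after the triangle inequality. Your crude $(a+b)^2\le 2a^2+2b^2$ would saddle the quadratic term with a factor~$2$ that never appears in the stated $\nu_z$, so the bound you would actually obtain is strictly weaker than~\eqref{2.est0}. The paper's device is a \emph{weighted} two-term Cauchy--Schwarz: writing $X:=[B_z(\mu_zI+A)v,v]$, $Y:=[B_zv,v]$, $c_0=\cos\fy$, $c_1=\cos(\zeta-\fy)$, one uses
\[
\db{G_zv}\le \sqrt{M_z}\sqrt{X}+\sqrt{\La_z|\wh z|}\sqrt{Y}
=\sqrt{\tfrac{M_z}{c_0}}\sqrt{c_0X}+\sqrt{\tfrac{\La_z}{c_1}}\sqrt{c_1|\wh z|Y}
\le\Bigl(\tfrac{M_z}{c_0}+\tfrac{\La_z}{c_1}\Bigr)^{1/2}\bigl(c_0X+c_1|\wh z|Y\bigr)^{1/2},
\]
whose square is linear in the \emph{same} bracket $c_0X+c_1|\wh z|Y$ that appears in $\Re(\alpha[G_zv,v])=\rho(c_0X+c_1|\wh z|Y)$. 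That matching is the whole point: the $\rho$-optimization then collapses to a one-dimensional minimization of $-2\rho B+\rho^2AB$ with $A=c_0^{-1}M_z+c_1^{-1}\La_z$ fixed, giving $\rho=1/A$ and the bound $\db{v}^2-\rho(c_0X+c_1|\wh z|Y)$, and finally one simply drops $c_1|\wh z|Y\ge0$ and uses $X\ge m_z\db{v}^2$. There is no ``optimize over $t\in[m_z,M_z]$, max at an endpoint'' argument to be done here; your attempt to reduce to a scalar ratio~$t$ is an unnecessary detour that obscures rather than supplies the key inequality. For the sharper estimate, the same scheme goes through with $\db{G_zv}^2\le M_zX+|\wh z|\Lambdasz Y$ directly (no square roots), and there the decomposition $M_zX+|\wh z|\Lambdasz Y=(M_z/c_0)(c_0X)+(\Lambdasz/c_1)(c_1|\wh z|Y)$ gives the $\max$ in place of the sum.
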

\begin{proof}
We have, for $\al=\rho\,e^{-i\fy}$,
\begin{align}\label{opt}
\db{(I-\al G_z)v}^2
=\db{v}^2-
2\Re(\alpha[G_zv,v])
+|\al|^2\db{G_zv}^2,
\end{align}
so, writing for brevity
$c_0:=c_0(\fy)=\cos\fy$ and $c_1:=c_1(\fy)=\cos(\zeta-\fy)$,
and noting that $\wh z=|\wh z|\,e^{i\zeta}$, 
\begin{equation}\label{eq: Re(alpha[Gv,v])}
\begin{aligned}
\Re\,(\alpha[G_zv,v])
&=
\Re\alpha\,[B_z(\mu_zI+A)v,v]+
\Re(\alpha \wh z)[B_zv,v]
\\&=
\rho\big(
c_0[B_z(\mu_zI+A)v,v]+
c_1|\wh z| [B_zv,v]\big).
\end{aligned}
\end{equation}
Noting that $c_0>0$ and $c_1>0$ for $\fy\in J$, we find,
since $|\wh z|\,\|B_z\|=\La_z,$
\begin{align}
\label{2.waste}
\db{G_zv}&\le \db{B_z(\mu_zI+A)v}+|\wh z|\,\db{B_zv}
\\
&\le M_z^{1/2} [B_z(\mu_zI+A)v,v]^{1/2}+\La_z^{1/2}|\wh z|^{1/2}[B_zv,v]^{1/2} 
\notag
\\
&\le(c_0^{-1}M_z+c_1^{-1}\La_z)^{1/2}
\big(c_0[B_z(\mu_zI+A)v,v]+c_1|\wh z| [B_zv,v]\big)^{1/2},
\notag
\end{align}
so that, by \eqref{opt}~and \eqref{eq: Re(alpha[Gv,v])}
\begin{align*}
\db{(I-\al G_z)v}^2
&\le
\db{v}^2-
2\rho\big(
c_0[B_z(\mu_zI+A)v,v]+
c_1|\wh z| [B_zv,v]\big)
\\
&\quad
+\rho^2(c_0^{-1}M_z+c_1^{-1}\La_z)
\big(
c_0[B_z(\mu_zI+A)v,v]+
c_1|\wh z| [B_zv,v]\big).
\end{align*}
Minimizing in $\rho$ we find
$\rho=1/(c_0^{-1}M_z+c_1^{-1}\La_z)
=c_0c_1/(c_1M_z+c_0\La_z),$
 and hence
 \[
 \db{(I-\alpha G_zv}^2
 \le \db{v}^2-\frac{c_0c_1}{c_1M_z+c_0\La_z}
 \big(c_0[B_z(\mu_zI+A)v,v]+
 c_1|\wh z| [B_zv,v]\big).
 \]
Here, by \eqref{2.ff1},
 \[
 c_0[B_z(\mu_zI+A)v,v]+
 c_1|\wh z| [B_zv,v]
\ge 
 c_0[B_z(\mu_zI+A)v,v]
\ge 
m_zc_0\db{v}^2,
 \]
and thus, remembering that $\rho$ depends on~$\varphi$ through $c_0$~and $c_1$,
 \[
 \db{(I-\alpha G_z)v}^2
 \le \db{v}^2
 -\frac{c_0^2\,c_1\,m_z}{c_1M_z+c_0\La_z}
 \db{v}^2=(1-\nu_0(\fy))
\db{v}^2.
 \]
Minimizing in $\fy$ over $J$ shows the result stated.

The first inequality in \eqref{2.waste} could be somewhat wasteful.
If we assume  that \eqref{2.ga} holds, then we find, instead
of~\eqref{2.waste},
\begin{align*}
\db{G_zv}^2&= \db{B_z(\mu_zI+A)v}^2+|\wh z|^2\,\db{B_zv}^2
+2\Re\big(\wh z\,[B_zv,B_z(\mu_zI+A)v]\big)
\\
&\le \db{B_z(\mu_zI+A)v}^2+|\wh z|^2\,\db{B_zv}^2
 -2\,\ga_z[B_zv,v]
\\
&\le M_z [B_z(\mu_zI+A)v,v]+|\wh z|(\La_z-2\ga_z/|\wh z|)[B_zv,v] 
\\
&\le\max(c_0^{-1}M_z,c_1^{-1}\Lambdasz)
\big(c_0[B_z(\mu_zI+A)v,v]+c_1|\wh z| [B_zv,v]\big),
\end{align*}
so that, by \eqref{opt},
\begin{align*}
\db{(I-&\al G_z)v}^2 \le
\db{v}^2- 2\rho\big( c_0[B_z(\mu_zI+A)v,v]+
c_1|\wh z| [B_zv,v]\big)
\\
&+\rho^2\max(c_0^{-1}M_z,c_1^{-1}\Lambdasz)
\big(
c_0[B_z(\mu_zI+A)v,v]+
c_1|\wh z| [B_zv,v]\big).
\end{align*}
The proof of~\eqref{2.est1} is now finished in the same way
as that of~\eqref{2.est0} above.
\end{proof}

In the limiting case when~$z\to0$~and $\mu_z\to0$, with~$\zeta\to\tfrac12\pi$,
the method of analysis in Theorem~\ref{2t.prec} gives
\begin{equation}\label{eq: gen precond z=0}
\alpha_0=\alphaso=\frac{1}{M}\quad\text{and}\quad
\wh\ep_0=\epsilonso=\sqrt{1-\frac{m}{M}}
	\approx1-\frac{m}{2M},
\end{equation}
compared to the error reduction ratio~$(M-m)/(M+m)\approx1-2m/M$
in~\eqref{2.eps}.

Applying Theorem~\ref{2t.prec} in the special case
$B_z=(\mu_zI+A)^{-1}$, with
\[
\|B_z\|=\frac{1}{\la_1+\mu_z},\quad m_z=M_z=1,\quad \ga_z=-\Re\wh z,
\]
we see from Table~\ref{tab2} that, for our model problem, 
$\rhosz$, $\varphisz$ and $\epsilonsz$
are close to the corresponding values in Table~\ref{tab1},
but the values of~$\wh\ep_{z}$ are worse.  (The points~$z_j$ are the
same for both tables, as are the values of~$\mu_z$.)

\begin{table}
\renewcommand{\arraystretch}{1.2}
\begin{center}
\caption{Richardson iteration preconditioned by $k$~V-cycles of AMG.}
\label{tab: Richardson AMG}
\begin{tabular}{r|ccc|cc|ccc}
\multicolumn{1}{c|}{$j$} &
$\rho_z$  &
$\varphi_z$  &
$\wh\ep_z$  &
$k$ &
$\lambda_1(F_z)$ &
$\rhosz$  &
$\varphisz$  &
$\epsilonsz$ \\
\hline
  0&  1.000&  0.00& 0.643&   1&   0.000& 1.000&  0.00& 0.643\\
  2&  0.517&  0.54& 0.767&   3&   0.003& 0.919&  0.41& 0.464\\
  4&  0.341&  0.73& 0.873&   3&   0.086& 0.811&  0.63& 0.623\\
  6&  0.238&  0.88& 0.934&   2&   0.043& 0.541&  1.00& 0.869\\
  8&  0.166&  1.02& 0.962&   2&   0.301& 0.429&  1.13& 0.918\\
 10&  0.125&  1.12& 0.976&   2&   0.738& 0.341&  1.22& 0.947\\
 12&  0.102&  1.19& 0.983&   2&   1.351& 0.277&  1.27& 0.963\\
 14&  0.093&  1.23& 0.989&   1&   0.387& 0.185&  1.30& 0.983\\
 16&  0.083&  1.26& 0.991&   1&   1.112& 0.175&  1.32& 0.985\\
 18&  0.077&  1.28& 0.992&   1&   2.388& 0.169&  1.34& 0.985\\
 20&  0.072&  1.29& 0.992&   1&   3.426& 0.158&  1.35& 0.987\\
\end{tabular}
\end{center}
\end{table}

To better understand the condition~\eqref{2.ga} for general~$B_z$, we
write
\begin{equation}\label{eq: Hz Fz}
H_z:=B_z(\mu_zI+A)=H^+_z+iH^-_z
\quad\text{and}\quad
F_z:=|x-\mu_z|H^+_z-yH^-_z,
\end{equation}
where the Hermitian operators~$H^\pm_z$ are defined by
\[
H^+_z:=\tfrac12(H_z+H_z^*)\quad\text{and}\quad 
H^-_z:=-i\tfrac12(H_z-H_z^*),
\]
and we have used ${}^*$ to denote the adjoint with 
respect to~$(\cdot,\cdot)$.
When $B_z$ commutes with~$A$, the operator~$H_z$ is Hermitian in~$V$,
and \eqref{2.ga} follows if $\ga_z\le |\Re\wh z|\,m_z$, because
$\la_1(H_z)\ge m_z$ by our assumption~\eqref{2.ff1}.
This result is contained as the case~$H^-_z=0$ 
of the following proposition.

\begin{proposition}\label{prop: gamma_z}
Fix $\wh z=\wh x+i\, y$ with $\wh x=x-\mu_z\le 0$
and $y\ge 0$, and let $F_z$ be the Hermitian operator defined 
in~\eqref{eq: Hz Fz}.
Then a necessary and sufficient condition for~\eqref{2.ga} is that
$0\le\gamma_z\le\lambda_1(F_z)$.
\end{proposition}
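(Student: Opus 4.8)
The plan is to rewrite the inequality~\eqref{2.ga} as a statement about the Rayleigh quotient of the Hermitian operator~$F_z$ from~\eqref{eq: Hz Fz}. First I would unwind the two bracket expressions in~\eqref{2.ga} using the definition $[u,w]=(B_z^{-1}u,w)$ from~\eqref{2.bb1}. This gives at once $[B_zv,v]=(B_z^{-1}B_zv,v)=(v,v)=\|v\|^2$ and, writing $H_z=B_z(\mu_zI+A)$ as in~\eqref{eq: Hz Fz},
\[
[B_zv,B_z(\mu_zI+A)v]=(B_z^{-1}B_zv,H_zv)=(v,H_zv).
\]
Hence~\eqref{2.ga} is exactly the assertion that $\Re\bigl(\wh z\,(v,H_zv)\bigr)\le-\ga_z\|v\|^2$ for every $v\in V$.

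Next I would split $H_z=H^+_z+iH^-_z$ into its Hermitian and skew-Hermitian parts as in~\eqref{eq: Hz Fz}. Since $H^\pm_z$ are Hermitian, the scalars $(H^+_zv,v)$ and $(H^-_zv,v)$ are real, so $(v,H_zv)=(H^+_zv,v)-i(H^-_zv,v)$; multiplying by $\wh z=\wh x+iy$, taking the real part, and invoking the hypothesis $\wh x=x-\mu_z\le0$ to write $\wh x=-|x-\mu_z|$, one obtains
\[
\Re\bigl(\wh z\,(v,H_zv)\bigr)
=\wh x\,(H^+_zv,v)+y\,(H^-_zv,v)
=-\bigl(|x-\mu_z|(H^+_zv,v)-y\,(H^-_zv,v)\bigr)
=-(F_zv,v).
\]
Therefore~\eqref{2.ga} is equivalent to $(F_zv,v)\ge\ga_z\|v\|^2$ for all $v\in V$, which, $F_z$ being Hermitian, holds precisely when $\ga_z\le\la_1(F_z)$ by the variational characterization of the smallest eigenvalue. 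Together with the standing requirement $\ga_z\ge0$, this is the claimed condition; in particular a nonnegative $\ga_z$ satisfying~\eqref{2.ga} exists at all iff $\la_1(F_z)\ge0$.

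I do not expect a genuine obstacle, since the argument is a short computation; the one place needing care is the sign and conjugation bookkeeping. The conjugate-linearity of $(\cdot,\cdot)$ in its second slot is what produces the $-i$ in $(v,H_zv)=(H^+_zv,v)-i(H^-_zv,v)$, and it is precisely the hypothesis $\wh x\le0$ that turns $\wh x\,(H^+_zv,v)$ into $-|x-\mu_z|(H^+_zv,v)$, so that $F_z$ appears with exactly the coefficients $|x-\mu_z|$ and $-y$ fixed in~\eqref{eq: Hz Fz}. As a sanity check, in the commuting case $H^-_z=0$ one has $F_z=|x-\mu_z|H^+_z$, whence $\la_1(F_z)\ge|x-\mu_z|\,m_z$ by~\eqref{2.ff1}, which recovers the sufficient condition $\ga_z\le|\Re\wh z|\,m_z$ noted just before the proposition.
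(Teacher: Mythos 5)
Your proof is correct and follows essentially the same route as the paper's: unwind the brackets to get $[B_zv,v]=\|v\|^2$ and $[B_zv,B_z(\mu_zI+A)v]=(v,H_zv)$, split $H_z$ into Hermitian and skew-Hermitian parts, take the real part of $\wh z(v,H_zv)$ to obtain exactly $-(F_zv,v)$, and then invoke the Rayleigh quotient characterization of $\lambda_1(F_z)$. The sign and conjugation bookkeeping you flag is handled correctly, and your closing sanity check matches the remark in the paper preceding the proposition.
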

\begin{proof}
We find that
$[B_zv,B_z(\mu_zI+A)v]=(v,H_zv)=(v,H^+_zv)-i\,(v,H^-_zv)$, so
\[
-\Re\bigl(\wh z[B_zv,B_z(\mu_zI+A)v]\bigr)
	=|\wh x|(v,H^+_zv)- y\,(v,H^-_zv)=(v,F_zv).
\]
Since $F_z$ is Hermitian and $[B_zv,v]=(v,v)$, it follows that
\[
\inf_{0\ne v\in V}\frac{-\Re\bigl(\wh z[B_zv,B_z(\mu_zI+A)v]\bigr)}{[B_zv,v]}
	=\inf_{0\ne v\in V}\frac{(v,F_zv)}{(v,v)}=\lambda_1(F_z).
\]
\end{proof}

In the Hermitian case, $H_z^-=0$, this proposition implies that \eqref{2.est1}
holds with $\Lambdasz=\La_z-2m_z|\cos\zeta|$ in~\eqref{2.wt}.
In general, since
\[
\lambda_1(F_z)\ge|\hat x|\lambda_1(H_z^+)-y\|H_z^-\|,
\]
a sufficient condition for $\lambda_1(F_z)\ge0$ is that
$\|H_z^-\|\le|\hat x|y^{-1}\lambda_1(H_z^+)$, which makes
$H_z$ essentially Hermitian.

We  have also the following simple consequence of 
Proposition~\ref{prop: gamma_z}.

\begin{corollary}\label{cor: ||Hz-I||}
If $\|H_z-I\|\le\delta$ for some $\delta<|\wh x|/(|\wh x|+y)$, then
\eqref{2.ga} is satisfied with $\gamma_z=|\wh x|-\delta(|\wh x|+y)$.
\end{corollary}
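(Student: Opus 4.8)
The goal is to prove Corollary~\ref{cor: ||Hz-I||}: from the hypothesis $\|H_z-I\|\le\delta$ with $\delta<|\wh x|/(|\wh x|+y)$, deduce that \eqref{2.ga} holds with $\gamma_z=|\wh x|-\delta(|\wh x|+y)$. By Proposition~\ref{prop: gamma_z}, condition~\eqref{2.ga} is equivalent to $0\le\gamma_z\le\lambda_1(F_z)$, where $F_z=|\wh x|H_z^++|y|H_z^-$ wait---actually $F_z=|x-\mu_z|H_z^+-yH_z^-$. So the plan is to bound $\lambda_1(F_z)$ from below in terms of $\delta$, and separately check that the proposed $\gamma_z$ is nonnegative.

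Let me think about the structure. First, since $\delta<|\wh x|/(|\wh x|+y)\le 1$, and $\|H_z-I\|\le\delta$, the Hermitian and skew-Hermitian parts satisfy $\|H_z^+-I\|\le\delta$ and $\|H_z^-\|\le\delta$ (the latter because $H_z^-=-\tfrac i2(H_z-H_z^*)$ and $\|H_z-H_z^*\|\le 2\|H_z-I\|$; similarly for $H_z^+-I=\tfrac12((H_z-I)+(H_z-I)^*)$). Hence $\lambda_1(H_z^+)\ge 1-\delta$ and $\|H_z^-\|\le\delta$. Then, using the inequality $\lambda_1(F_z)\ge|\wh x|\lambda_1(H_z^+)-y\|H_z^-\|$ already recorded in the excerpt,
\[
\lambda_1(F_z)\ge|\wh x|(1-\delta)-y\delta=|\wh x|-\delta(|\wh x|+y)=\gamma_z.
\]
Finally, the hypothesis $\delta<|\wh x|/(|\wh x|+y)$ gives exactly $\delta(|\wh x|+y)<|\wh x|$, i.e.\ $\gamma_z>0\ge0$, so $0\le\gamma_z\le\lambda_1(F_z)$ and Proposition~\ref{prop: gamma_z} applies.

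The only subtlety worth spelling out is the passage from $\|H_z-I\|\le\delta$ to the bounds on $H_z^\pm$; this is routine operator-norm bookkeeping and the estimate $\|H_z^-\|\le\|H_z-I\|$ deserves a one-line justification since $I$ is Hermitian so it contributes only to $H_z^+$. I do not anticipate any real obstacle here---this is a short deduction chaining the already-proven Proposition~\ref{prop: gamma_z} with the displayed lower bound for $\lambda_1(F_z)$; the ``hard part'' is merely being careful that all three quantities $|\wh x|$, $y$, $\delta$ enter with the right signs, which is guaranteed by the standing assumptions $\wh x\le0$, $y\ge0$ from Proposition~\ref{prop: gamma_z}.
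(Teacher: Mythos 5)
Your proof is correct and follows essentially the same route as the paper's: both derive $\|H_z^+-I\|\le\delta$ and $\|H_z^-\|\le\delta$ from $\|H_z-I\|\le\delta$ via the triangle inequality and $\|H_z^*-I\|=\|H_z-I\|$, then bound $\lambda_1(F_z)$ from below by $|\wh x|(1-\delta)-y\delta=\gamma_z$ and invoke Proposition~\ref{prop: gamma_z}. The only superficial difference is that you apply the already-displayed inequality $\lambda_1(F_z)\ge|\wh x|\lambda_1(H_z^+)-y\|H_z^-\|$, while the paper writes $F_z=|\wh x|I+|\wh x|(H_z^+-I)-yH_z^-$ and bounds $(v,F_zv)$ directly, which amounts to the same estimate.
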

\begin{proof}
Since $\|H_z^*-I\|=\|H_z-I\|$, we have
$\|H_z^-\|=\|\tfrac12(H_z-I)-\tfrac12(H_z^*-I)\|\le\delta$ and
$\|H_z^+-I\|\le\tfrac12\|H_z-I\|+\tfrac12\|H_z^*-I\|=\|H_z-I\|\le\delta$,
so it follows from $F_z=|\wh x|I+|\wh x|(H_z^+-I)-yH_z^-$ that
\[
(v,F_zv)\ge|\wh x|\|v\|^2-\delta|\wh x|\|v\|^2-\delta y\|v\|^2=\gamma_z\|v\|^2
\quad\text{for all $v\in V$.}
\]
Hence, $\lambda_1(F_z)\ge\gamma_z>0$.
\end{proof}

We now consider the practical application of these methods to the
linear system~\eqref{1.matsg}.  Putting $\A_z:=z\M +\S$ so that
$\A_z\vecw=\vecg$, the basic Richardson iteration~\eqref{6.5}
takes the form
\[
\vecw^{n+1}=(\I-\alpha\M^{-1}\A_z)\vecw^n+\alpha\M^{-1}\vecg
	=\vecw^n+\alpha\M^{-1}\vecr^n,
\]
where $\vecr^n:=\vecg-(z\M+\S)\vecw^n$ denotes the $n$th residual.
For the lumped mass method, we replace~$\M$ throughout by the corresponding
diagonal matrix~$\D$, whose inverse is trivial to compute.

In the case of the special preconditioner $B_z=(\mu_z I+A)^{-1}$,
we find that $G_z\vecv=(\mu_zI+A)^{-1}A_z\vecv=(\mu_z\M+S)^{-1}\A_z\vecv$ 
and so \eqref{2.2} takes the form
\[
\vecw^{n+1}=\vecw^n+\alpha(\mu_z\M+\S)^{-1}\vecr^n.
\]
We may write a general preconditioner in the form~$B_z\vecv=\B_z\M\vecv$, 
where $\B_z$ is Hermitian and positive-definite with respect to the standard
unitary inner product on~$\C^N$, since then
$(B_z\vecv,\vecw)=\iprod{\B_z\M\vecv,\M\vecw}$.  In this way,
\[
\vecw^{n+1}=\vecw^n+\alpha\B_z\vecr^n.
\]
The condition~\eqref{2.ff1} is equivalent to
\[
m_z\iprod{\B_z^{-1}\vecu,\vecu}\le\iprod{(\mu_z\M+\S)\vecu,\vecu}
	\le M_z\iprod{\B_z^{-1}\vecu,\vecu}
	\quad\forall \vecu\in\C^N,
\]
which means that 
$\lambda_j(\mu_z\M+\S,\B_z^{-1})$ belongs to the closed 
interval~$[m_z,M_z]$ for all~$j$.  In Tables \ref{tab2}~and 
\ref{tab: Richardson AMG}, the values of~$\mu_z$ are the same as in
Table~\ref{tab1}, and for our computations we used
best possible values $m_z=\lambda_1(\mu_z\M+\S,\B_z^{-1})$
and $M_z=\lambda_N(\mu_z\M+\S,\B_z^{-1})$.
Note also that $\|B_z\|=\lambda_N(\M,\B_z^{-1})=\lambda_N(\B_z,\M^{-1})$ 
because $B_z\vecv=\lambda\vecv$ is equivalent to 
$\M\vecv=\lambda\B_z^{-1}\vecv$ and to $\B_z(\M\vecv)=\lambda\M^{-1}(\M\vecv)$.

To apply Proposition~\ref{prop: gamma_z}, we introduce Hermitian 
matrices
\[
\H^+_z:=\mu_z\M\B_z\M+\tfrac12(\S\B_z\M+\M\B_z\S)
\quad\text{and}\quad
\H^-_z:=i\tfrac12(\S\B_z\M-\M\B_z\S),
\]
so that $\H^\pm_z\vecv=\M H^\pm_z\vecv$ for all~$\vecv\in\C^N$, and then
put $\F_z:=|x-\mu_z|\H^+_z-y\H^-_z$ so that
$F_z\vecv=\M^{-1}\F_z\vecv$.  In this way,
$\lambda_1(F_z)=\lambda_1(\F_z,\M)$.

Table~\ref{tab2} also shows the values of $\rho_z$, $\fy_z$ 
and $\wh\ep_{z}$ using $\B_z=(\L_z\L_z^T)^{-1}$ for an incomplete
Cholesky factorization~$\L_z\L_z^T\approx\mu_z\I+\A$, computed using
\cite{JonesPlassmann1995}.  
Although better than than no preconditioning, the error reduction factors
are still too close to unity for the method to be of practical use.  
We can compare the values when~$z=0$ to the optimal ones given 
by~\eqref{2.eps}.  In our case,
$\lambda_1(\B_0\S)=0.0102$~and $\lambda_N(\B_0\S)=1.55$, so
$\alpha=1.28$, $\kappa=152.0$ and $(\kappa-1)/(\kappa+1)=0.987$,
compared to
the values $\alpha_0=\alphaso=0.643$~and 
$\wh\ep_{0}=\epsilonso=0.997$
given by~\eqref{eq: gen precond z=0}.  For~$j\ge1$, we found that 
$\lambda_1(F_z)<0$ at $z=z_j$, so we could not apply the second 
estimate~\eqref{2.est1} of Theorem~\ref{2t.prec}.

To find a better preconditioner, consider any symmetric, linear iterative
process for the equation~$(\mu_z\M+\S)\vecv=\vecg$, of the form
\begin{equation}\label{eq: lin it}
\vecv^{j+1}=\vecv^j+\B_z\bigl(\vecg-(\mu_z\M+\S)\vecv^j\bigr),
\quad\text{with $\B_z^T=\B_z$.}
\end{equation}
Performing $k$~steps of this iteration defines another linear iterative
process,
\begin{equation}\label{eq: k lin it}
\vecv^{j+k}=\vecv^j+\B_{z,k}\bigl(\vecg-(\mu_z\M+\S)\vecv^j\bigr),
\end{equation}
and the relation between $\B_z=\B_{z,1}$ and $\B_{z,k}$ may be seen 
from the error reduction operator:
\[
\I-\B_{z,k}(\mu_z\M+\S)=\bigl(\I-\B_z(\mu_z\M+\S)\bigr)^k.
\]
It follows that $\B_{z,k}^T=\B_{z,k}$, so the $k$-step process is also
symmetric.  The 1-step process converges if and only if 
$\sigma\bigl(\I-\B_z(\mu_z\M+\S)\bigr)\subseteq[-\varrho_z,\varrho_z]$
for some $\varrho_z<1$, because $\B_z(\mu_z\M+\S)$ is symmetric with respect 
to the inner product~$\iprod{(\mu_z\M+\S)\vecv,\vecw}$; 
cf.~Bramble~\cite[page~4]{Br}.
In this case, the eigenvalues of $\B_z(\mu_z\M+\S)$ lie in the 
interval~$[1-\varrho_z,1+\varrho_z]$, or equivalently,
\[
(1-\varrho_z)\iprod{(\mu_z\M+S)^{-1}\vecv,\vecv}\le\iprod{\B_z\vecv,\vecv}
	\le(1+\varrho_z)\iprod{(\mu_z\M+S)^{-1}\vecv,\vecv}
\]
for all $\vecv\in\C^N$, showing that $\B_z$ is positive-definite.  
In the same way, the eigenvalues of~$\B_{z,k}$ lie in the interval
$[(1-\varrho_z)^k,(1+\varrho_z)^k]$ and $\B_{z,k}$ is positive-definite.
Thus, any symmetric and
convergent linear iterative process yields a suitable 
preconditioner~$\B_{z,k}$, and moreover the hypothesis of
Corollary~\ref{cor: ||Hz-I||} will be satisfied for~$k$ sufficiently large,
because $\H_z=\B_{z,k}(\mu_z\M+\S)\to\I$ as~$k\to\infty$.

Table~\ref{tab: Richardson AMG} shows the results obtained when one step
of the linear iteration~\eqref{eq: lin it} corresponds to a single V-cycle
of a symmetric, algebraic multigrid (AMG) solver~\cite{BellOlsonSchroder2011}, 
and thus \eqref{eq: k lin it} corresponds to $k$~V-cycles.
For each quadrature point~$z_j$, the value of~$k$ shown is the 
smallest for which $\lambda_1(F_z)\ge0$, allowing application of
Proposition~\ref{prop: gamma_z}.

The need to compute $m_z$~and $M_z$, and ideally also 
$\lambda_1(F_z)$, to determine a good choice of the acceleration 
parameter~$\alpha$, means that Richardson iteration is less attractive
in practice than the Krylov methods of the next section, which do not 
suffer from this drawback, and also exhibit faster convergence.
\section{Conjugate gradient method}
\label{sec3}

Once again, assume that $A$ is a positive definite Hermitian operator in a
finite-dimensional complex inner product space $V$, 
and consider the equation
\begin{equation}\label{cg1}
A_zw=g,\where A_z:=zI+A,\ z=x+iy,\ \arg z\in (-\pi,\pi).
\end{equation}
Given $w_0$, a preliminary guess for the solution $w$, 
we define the residual~$r_0:=g-A_zw_0$ and the associated 
Krylov subspace of order~$n\ge1$,
\[
V_n:=\vecspan\{r_0, A_zr_0, \dots, A_z^{n-1} r_0\}=
\vecspan\{r_0, Ar_0, \dots, A^{n-1} r_0\},
\] 
with $V_0:=\{0\}$.  Note that $V_n$ depends on $z$ through $r_0$. 
The exact soloution of \eqref{cg1} satisfies
\begin{equation}
\label{3.ex}
(A_zw,\fy)=(g,\fy),\quad\forall\ \fy\in V.
\end{equation}
As in the classical conjugate gradient method, we define
the approximate solution  
$w_n=w_0+v_n$, with $v_n\in  V_n$,
by Galerkin's method,  or
\begin{equation}\label{cg2}
(A_zw_n,\fy)
=(g,\fy),\quad\forall\fy\in V_n,
\end{equation}
and find that $v_n=w_n-w_0$ satisfies
\begin{align*}
(A_zv_n,\fy)
=(A_z(w_n-w_0),\fy)
=(g,\fy)-
(A_zw_0,\fy)=(r_0,\fy),\quad \forall\fy\in V_n.
\end{align*}
The solution of \eqref{cg2} is therefore unique,
because if $r_0=0$ we have
\[
(A_zv_n,v_n)=
z\|v_n\|^2+
(Av_n,v_n)=0,
\]
which implies $v_n=0$. Hence there also exists a solution of 
the finite dimensional problem \eqref{cg2}. 
The error
$e_n:=w_n-w$ satisfies
\begin{equation}\label{4.erreq}
(A_ze_n,\fy)=0,\quad\forall\fy\in V_n.
\end{equation}
	
To study the convergence of $w_n$, we introduce the norm 
\begin{equation}\label{3.tbdef}
\tb{v}^2:=
|z|\|v\|^2
+(Av,v),
\end{equation}
and note the following lemma.

\begin{lemma}\label{l4.1}
If $\arg z=\phi\in(-\pi,\pi)$, then for all $v$, $w\in V$ we have
\[
|(A_zv,w)|\le\tb{v}\,\tb{w}
\andy\
|(A_zv,v)|\ge
\cos(\tfrac12\phi)\,\tb{v}^2.
\]
\end{lemma}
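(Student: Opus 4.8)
The plan is to diagonalize $A$ and reduce both inequalities to elementary estimates on the eigenvalues. Since $A$ is positive definite Hermitian on the finite-dimensional space $V$, we may write $v=\sum_k v_k \phi_k$ and $w=\sum_k w_k\phi_k$ in an orthonormal eigenbasis $\{\phi_k\}$ with $A\phi_k=\lambda_k\phi_k$, $\lambda_k>0$. Then $A_z=zI+A$ acts diagonally with eigenvalues $z+\lambda_k$, so
\[
(A_zv,w)=\sum_k (z+\lambda_k)\,v_k\bar w_k,\qquad
\tb{v}^2=\sum_k\bigl(|z|+\lambda_k\bigr)|v_k|^2 .
\]
The first claim will follow from the Cauchy–Schwarz inequality once we show the pointwise bound $|z+\lambda_k|\le |z|+\lambda_k$ for each $k$; this is just the triangle inequality, since $\lambda_k>0$. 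Indeed, $|(A_zv,w)|\le\sum_k(|z|+\lambda_k)|v_k||w_k|\le\bigl(\sum_k(|z|+\lambda_k)|v_k|^2\bigr)^{1/2}\bigl(\sum_k(|z|+\lambda_k)|w_k|^2\bigr)^{1/2}=\tb{v}\tb{w}$.

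For the second claim I would work with the quantity $(A_zv,v)=\sum_k(z+\lambda_k)|v_k|^2$, which is a convex combination (up to the positive scalar $\tb{v}^2$) of the complex numbers $z+\lambda_k$, each lying in the sector $\{\,\zeta : \Re\zeta>0,\ |\arg\zeta|\le|\arg z|\,\}$ — here one uses that $\lambda_k>0$ shifts $z$ to the right, which only decreases the argument in absolute value. The key elementary fact is that if a complex number $\zeta$ satisfies $|\arg\zeta|\le|\phi|$ with $|\phi|<\pi$, then $\Re\zeta\ge\cos\phi\cdot|\zeta|$, hence also $\Re(z+\lambda_k)\ge\cos(\tfrac12\phi)\,|z+\lambda_k|\ge\cos(\tfrac12\phi)(|z|+\lambda_k)$? — no, the last step is false in general, so instead I take real parts directly: $\Re(z+\lambda_k)=x+\lambda_k$ and $|z+\lambda_k|\le|z|+\lambda_k$, and I want $x+\lambda_k\ge\cos(\tfrac12\phi)(|z|+\lambda_k)$. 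Since $\lambda_k>0$ and $\cos(\tfrac12\phi)\le 1$, it suffices to check this at $\lambda_k=0$, i.e. $x\ge\cos(\tfrac12\phi)|z|$, i.e. $\cos\phi\ge\cos(\tfrac12\phi)$ — which is false. So the naive "take real parts" route needs the sharper sector estimate: I will instead prove $|(A_zv,v)|\ge\Re\bigl(e^{-i\phi/2}(A_zv,v)\bigr)=\sum_k\bigl(\cos(\tfrac12\phi)\,\lambda_k+\Re(e^{-i\phi/2}z)\bigr)|v_k|^2$, and then use $\Re(e^{-i\phi/2}z)=|z|\cos(\tfrac12\phi)$ together with $\cos(\tfrac12\phi)\le 1$ to bound $\cos(\tfrac12\phi)\lambda_k\ge\cos(\tfrac12\phi)\lambda_k$ trivially, giving $\sum_k\cos(\tfrac12\phi)(\lambda_k+|z|)|v_k|^2=\cos(\tfrac12\phi)\tb{v}^2$.

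The main obstacle, then, is exactly the temptation to bound $|z+\lambda_k|$ by $|z|+\lambda_k$ on the wrong side: the lower bound must be obtained by rotating by the half-angle $e^{-i\phi/2}$ and taking real parts \emph{before} splitting off the $\lambda_k$ term, so that the real positive contribution $\lambda_k\cos(\tfrac12\phi)$ and the contribution $|z|\cos(\tfrac12\phi)$ from the shift combine cleanly into $\cos(\tfrac12\phi)\tb{v}^2$. Once this bookkeeping is arranged, both inequalities are immediate, and the finite-dimensionality of $V$ is used only to justify the eigenexpansion (though the argument goes through verbatim in the general Hermitian case via the spectral theorem).
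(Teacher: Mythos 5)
Your argument is correct and rests on the same key device as the paper's proof, namely rotating by the half-angle $e^{-i\phi/2}$ and taking real parts to obtain the lower bound, and Cauchy--Schwarz for the upper bound. The only cosmetic difference is that you pass through an orthonormal eigenbasis of $A$, whereas the paper works directly with $\|v\|^2$ and $(Av,v)$ (so $\Re\bigl(e^{-i\phi/2}(A_zv,v)\bigr)=|z|\cos(\tfrac12\phi)\|v\|^2+\cos(\tfrac12\phi)(Av,v)$ without any spectral decomposition, and the first inequality follows from $|z|\,\|v\|\,\|w\|+(Av,v)^{1/2}(Aw,w)^{1/2}\le\tb{v}\,\tb{w}$ by the two-term Cauchy--Schwarz inequality).
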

\begin{proof}
The first part follows at once from
\[
|(A_zv,w)|\le |z|\,|(v,w)| +|(Av,w)|
	\le |z|\,\|v\|\,\|w\| +(Av,v)^{1/2} \,(Aw,w)^{1/2}.
\]
Setting $\be:=e^{-i\phi/2}$, the second part now results from
\begin{align*}
\Re\bigl(\be(A_zv,v)\bigr)&= \Re(\be z)\|v\|^2 +\Re\be (Av,v) \\
&\ge|z|\cos(\tfrac12\phi)\|v\|^2 +\cos(\tfrac12\phi)\,(Av,v)
	=\cos(\tfrac12\phi)\tb{v}^2.
\end{align*}
\end{proof}

Using this lemma, we have the following quasi-optimality result.

\begin{proposition}\label{l4.2}
Let $w$ and $w_n$ be the solutions of \eqref{cg1}~and \eqref{cg2}, respectively.
Then, for $\arg z=\phi\in(-\pi,\pi)$, 
\[
\tb{w_n-w}\le\sec(\tfrac12\phi)\, \inf_{v\in w_0+V_n}\tb{v-w}.
\]
\end{proposition}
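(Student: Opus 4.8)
The plan is to run the standard C\'ea-type quasi-optimality argument, adapted to the complex-shifted (non-Hermitian) setting by using the two halves of Lemma~\ref{l4.1} in place of the coercivity and boundedness of a symmetric sesquilinear form. First I would fix an arbitrary $v\in w_0+V_n$ and note that $w_n-v\in V_n$, since both $w_n$ and $v$ lie in the affine space $w_0+V_n$. Hence the Galerkin orthogonality~\eqref{4.erreq} applies with the test function $\fy=w_n-v$, giving $(A_ze_n,w_n-v)=0$, and therefore, writing $e_n=w_n-w$,
\[
(A_ze_n,e_n)=(A_ze_n,w_n-w)=(A_ze_n,v-w)+(A_ze_n,w_n-v)=(A_ze_n,v-w).
\]

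Next I would apply the lower bound of Lemma~\ref{l4.1} to the left-hand side and the Cauchy--Schwarz-type upper bound of the same lemma to the right-hand side, obtaining
\[
\cos(\tfrac12\phi)\,\tb{e_n}^2\le|(A_ze_n,e_n)|=|(A_ze_n,v-w)|\le\tb{e_n}\,\tb{v-w}.
\]
If $\tb{e_n}=0$ the asserted inequality is trivial; otherwise I divide through by $\tb{e_n}$ to get $\tb{e_n}\le\sec(\tfrac12\phi)\,\tb{v-w}$. Since $v\in w_0+V_n$ was arbitrary, passing to the infimum over all such $v$ gives the stated estimate.

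The argument is essentially routine once Lemma~\ref{l4.1} is available, so there is no real obstacle; the only point needing a word of care is that $\cos(\tfrac12\phi)>0$ for $\phi\in(-\pi,\pi)$, which is precisely why the hypothesis $\arg z\in(-\pi,\pi)$ is imposed and what makes $\sec(\tfrac12\phi)$ finite and the division legitimate. The substantive work has already been done in establishing the sesquilinear-form bounds of Lemma~\ref{l4.1}.
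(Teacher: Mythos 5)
Your argument is correct and coincides with the paper's own proof: both use the Galerkin orthogonality \eqref{4.erreq} with the test function $w_n-v\in V_n$ to replace $(A_ze_n,e_n)$ by $(A_ze_n,v-w)$, then invoke the two inequalities of Lemma~\ref{l4.1} and divide by $\tb{e_n}$. The only difference is that you spell out the intermediate decomposition and the trivial case $\tb{e_n}=0$, which the paper leaves implicit.
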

\begin{proof}
Lemma~\ref{l4.1} and \eqref{4.erreq} show that, for any $v\in w_0+V_n$,
\begin{align*}
\cos(\tfrac12\phi)\tb{w_n-w}^2 &\le 
	\bigl|\bigl(A_z(w_n-w),w_n-w\bigr)\bigr| 
	=\bigl|\bigl(A_z(w_n-w),v-w\bigr)\bigr| \\
	&\le\tb{w_n-w}\,\tb{v-w},
\end{align*}
which implies the result stated.
\end{proof}

We now proceed to generalize the classical convergence analysis of the 
CG method by allowing for the complex shift in~$A_z$.  Let $\Poly_n$ denote
the space of polynomials of degree at most~$n$, with complex
coefficients.

\begin{theorem}\label{t4.1}
Let $w$ and $w_n$ be the solutions of \eqref{cg1}~and \eqref{cg2}, respectively.
If $Q_n\in\Poly_n$ and $Q_n(0)=1$, then,
for $\arg z=\phi\in(-\pi,\pi)$, 
\[
\tb{e_n}
\le\sec(\tfrac{1}{2}\phi)\,
\max_{\la\in\si(A)}|Q_n(z+\la)|\,\tb{e_0},\where e_n:=w_n-w.
\]
\end{theorem}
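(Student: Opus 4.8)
The plan is to combine the quasi-optimality estimate of Proposition~\ref{l4.2} with a polynomial approximation of the identity, exactly as in the classical ($z=0$) theory, but keeping track of the complex shift. First I would invoke Proposition~\ref{l4.2} to bound $\tb{e_n}\le\sec(\tfrac12\phi)\inf_{v\in w_0+V_n}\tb{v-w}$, so the task reduces to constructing a good trial element $v\in w_0+V_n$. The natural choice is $v=w_0+v_n$ with $v_n\in V_n$ chosen so that $v-w=P_n(A_z)e_0$ for a suitable polynomial $P_n$; since $V_n=\vecspan\{r_0,Ar_0,\dots,A^{n-1}r_0\}$ and $r_0=g-A_zw_0=-A_ze_0$, any element of $V_n$ has the form $-\tilde P(A_z)A_ze_0$ with $\tilde P\in\Poly_{n-1}$, hence $v-w = e_0 - \tilde P(A_z)A_ze_0 = Q_n(A_z)e_0$ where $Q_n(\zeta):=1-\zeta\tilde P(\zeta)$ satisfies $Q_n\in\Poly_n$ and $Q_n(0)=1$. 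Conversely, every such $Q_n$ arises this way, so the infimum over $w_0+V_n$ is an infimum over $\{Q_n\in\Poly_n:Q_n(0)=1\}$ of $\tb{Q_n(A_z)e_0}$.

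It then remains to estimate $\tb{Q_n(A_z)e_0}$ for a fixed admissible $Q_n$. Here I would diagonalize $A$: write $e_0=\sum_j c_j\psi_j$ in an orthonormal eigenbasis $A\psi_j=\lambda_j\psi_j$. Then $A_z\psi_j=(z+\lambda_j)\psi_j$, so $Q_n(A_z)e_0=\sum_j c_j Q_n(z+\lambda_j)\psi_j$, and since the $\psi_j$ are orthonormal with $A$ acting diagonally,
\[
\tb{Q_n(A_z)e_0}^2=\sum_j|c_j|^2|Q_n(z+\lambda_j)|^2\bigl(|z|+\lambda_j\bigr)
\le\Bigl(\max_{\la\in\si(A)}|Q_n(z+\la)|^2\Bigr)\sum_j|c_j|^2\bigl(|z|+\lambda_j\bigr)
=\max_{\la\in\si(A)}|Q_n(z+\la)|^2\,\tb{e_0}^2.
\]
Taking square roots and combining with the quasi-optimality bound gives the claimed inequality. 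The one point requiring a little care — and the only genuine obstacle — is the reduction of $\inf_{v\in w_0+V_n}$ to an infimum over polynomials: one must check that $V_n$ as a $z$-dependent Krylov space is exactly $\{\tilde P(A)r_0:\tilde P\in\Poly_{n-1}\}=\{\tilde P(A_z)r_0:\tilde P\in\Poly_{n-1}\}$ (the two descriptions in the excerpt's definition of $V_n$ coincide because $A_z=zI+A$ and constants are available), and that multiplying by $r_0=-A_ze_0$ and subtracting from $e_0$ produces precisely the polynomials vanishing to value~$1$ at the origin.

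Since the theorem only asserts the bound for \emph{given} $Q_n$, no minimization over $Q_n$ is needed and no Chebyshev argument enters at this stage; that optimization is carried out afterwards to derive~\eqref{1.errbd}. So the proof is short: quasi-optimality, identify the trial space with $\{Q_n:Q_n(0)=1\}$, diagonalize to pull out $\max_{\la\in\si(A)}|Q_n(z+\la)|$, done.
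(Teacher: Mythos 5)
Your proof is correct and follows essentially the same route as the paper: invoke Proposition~\ref{l4.2}, choose the trial vector $v=w+Q_n(A_z)e_0\in w_0+V_n$ via the factorization $Q_n(\zeta)=1-\zeta\widetilde P(\zeta)$ and the identity $r_0=-A_ze_0$, and then bound $\tb{Q_n(A_z)e_0}$ spectrally. The only cosmetic difference is that the paper cites normality of~$A_z$ to obtain $\|Q_n(A_z)e_0\|\le\max_{\la\in\si(A)}|Q_n(z+\la)|\,\|e_0\|$ and the analogous bound in the $A$-inner product, while you unpack the same fact by explicitly expanding $e_0$ in an orthonormal eigenbasis of~$A$; these are equivalent.
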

\begin{proof}
Let $v:=w+Q_n(A_z)e_0$.  Since $Q_n(\lambda)=1+\lambda P_{n-1}(\lambda)$ 
with $P_{n-1}\in\Poly_{n-1}$ and $r_0=g-A_zw_0=-A_z(w_0-w)
=-A_ze_0$, we have $Q_n(A_z)e_0=e_0-P_{n-1}(A_z)r_0$. 
Hence $v=w_0-P_{n-1}(A_z)r_0\in w_0+V_n$, and we conclude by
Proposition~\ref{l4.2} that
\[
\cos(\tfrac{1}{2}\phi)\tb{e_n}\le\tb{v-w}=\tb{Q_n(A_z)e_0}.
\]
Since $A_z$ is a normal operator,
\[
\|Q_n(A_z)e_0\|\le\max_{\lambda\in\sigma(A)}|Q_n(z+\lambda)|\,\|e_0\|.
\]
Similarly,
\[
\bigl(AQ_n(A_z)e_0, Q_n(A_z)e_0\bigr)
	 \le\max_{\la\in\si(A)}|Q_n(z+\la)|^2\,(Ae_0,e_0),
\]
and we conclude that
\begin{equation*}\label{4.q}
\tb{Q_n(A_z)e_0}\le 
\max_{\la\in\si(A)}|Q_n(z+\la)|\,\tb{e_0},
\end{equation*}
which completes the proof.
\end{proof}

We now introduce the Tchebyshev polynomial $T_n\in\Poly_n$ defined by
\[
T_n(\cos\theta)=\cos(n\theta)\quad\text{for $\theta\in\mathbb C$,}
\]
or, equivalently, since $\cos(i\theta)=\cosh\theta$, by
$T_n(\cosh\theta)=\cosh(n\theta)$,
and show the following consequence of Theorem \ref{t4.1}.

\begin{theorem}\label{t3.2}
With the above notation, we have, for $\phi\in(-\pi,\pi)$,
\[
\tb{e_n} \le\sec(\tfrac{1}{2}\phi)\, |T_n(s_z)|^{-1} \tb{e_0},
\where s_z:=-\frac{\la_1+\la_N+2z}{\la_N-\la_1}.
\]
With $\arg\sqrt{\la_j+z}\in(-\tfrac12\pi,\tfrac12\pi),\
j=1,N,$ 
we  may write
\[
T_n(s_z)=\tfrac12(\eta_z^n+\eta_z^{-n}),
\where \ \eta_z:=-
\frac{\sqrt{\la_N+z}-\sqrt{\la_1+z}}
{\sqrt{\la_N+z}+\sqrt{\la_1+z}}.
\]
Furthermore, $|\eta_z|\le 1-c\lambda_N^{-1/2}$ with~$c=c(z,\lambda_1)>0$.
\end{theorem}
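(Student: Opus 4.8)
The plan is to apply Theorem~\ref{t4.1} with the classical choice
$Q_n(\zeta)=T_n\bigl((\la_1+\la_N-2\zeta)/(\la_N-\la_1)\bigr)\big/T_n(s_z)$,
where $s_z$ is the value of the argument at $\zeta=0$, so that $Q_n(0)=1$ automatically. First I would check that $Q_n\in\Poly_n$ and that, as $\zeta=z+\la$ ranges over $z+\si(A)\subset z+[\la_1,\la_N]$, the argument $(\la_1+\la_N-2(z+\la))/(\la_N-\la_1)$ traces the line segment from $s_z$-direction down to $-s_z$; more precisely it lies in the segment $[-1-2z/(\la_N-\la_1)\cdot{}\dots]$—the key point is just that for $\la\in[\la_1,\la_N]$ the real part $(\la_1+\la_N-2\la)/(\la_N-\la_1)$ runs over $[-1,1]$ while the imaginary contribution $-2z/(\la_N-\la_1)$ is fixed. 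Since $|T_n|\le1$ on $[-1,1]$ and $T_n$ is entire, I would bound $\max_{\la\in\si(A)}|Q_n(z+\la)|\le |T_n(s_z)|^{-1}\max_{u\in[-1,1]}|T_n(u-2z/(\la_N-\la_1))|$. This last max is not obviously $\le1$ because of the imaginary perturbation; the cleanest route is the standard one: $T_n$ maps the exterior of $[-1,1]$ conformally, and on the relevant short vertical-ish segment $|T_n|$ is dominated by its value at the far endpoint, which is exactly $|T_n(s_z)|$ up to the sign bookkeeping. So I would argue $\max_{u\in[-1,1]}|T_n(u+w)|=|T_n(1+w)|$ or $|T_n(-1+w)|$ for $w=-2z/(\la_N-\la_1)$ with $\Re z>-\la_1$ roughly, giving the ratio $\le1$ after combining, hence the first displayed bound follows from Theorem~\ref{t4.1}.

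Next I would establish the factorization $T_n(s_z)=\tfrac12(\eta_z^n+\eta_z^{-n})$. This is the identity $T_n(\cosh\theta)=\cosh(n\theta)=\tfrac12(e^{n\theta}+e^{-n\theta})$, so I need to exhibit $\theta$ with $\cosh\theta=s_z$ and $e^{\theta}=\eta_z^{-1}$ (or $\eta_z$). Writing $\la_j+z=(\sqrt{\la_j+z})^2$ with the stated branch choice $\arg\sqrt{\la_j+z}\in(-\tfrac12\pi,\tfrac12\pi)$ (legitimate since $\la_j+z\notin(-\infty,0]$ when $\arg z\in(-\pi,\pi)$ and $\la_j>0$), set $p:=\sqrt{\la_N+z}$, $q:=\sqrt{\la_1+z}$. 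Then a direct computation gives
\[
\tfrac12\Bigl(\frac{p+q}{p-q}\cdot(-1)+\frac{p-q}{p+q}\cdot(-1)\Bigr)
=-\frac{(p+q)^2+(p-q)^2}{2(p^2-q^2)}=-\frac{p^2+q^2}{p^2-q^2}
=-\frac{\la_N+\la_1+2z}{\la_N-\la_1}=s_z,
\]
which is exactly $\tfrac12(\eta_z^n+\eta_z^{-n})$ evaluated with $n=1$ and identifies $\eta_z=-(p-q)/(p+q)$; the case of general $n$ then follows because $\eta_z^n+\eta_z^{-n}=2T_n\bigl(\tfrac12(\eta_z+\eta_z^{-1})\bigr)=2T_n(s_z)$ by the Tchebyshev recursion. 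I would also note $|\eta_z|<1$: since $\Re(p\bar q)=\Re\bigl(\sqrt{(\la_N+z)(\la_1+z)}\bigr)>0$ under the branch convention (both factors have argument in $(-\tfrac12\pi,\tfrac12\pi)$, so their product of square roots does too, up to the product having argument the average), we get $|p-q|^2=|p|^2+|q|^2-2\Re(p\bar q)<|p|^2+|q|^2+2\Re(p\bar q)=|p+q|^2$.

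Finally, for the rate $|\eta_z|\le1-c\la_N^{-1/2}$ I would expand for $\la_N\to\infty$. Here $p=\sqrt{\la_N+z}=\sqrt{\la_N}\,(1+O(\la_N^{-1}))$ while $q=\sqrt{\la_1+z}$ is fixed, so $\eta_z=-(p-q)/(p+q)=-(1-q/p)/(1+q/p)=-1+2q/p+O(\la_N^{-1})=-1+2q\la_N^{-1/2}+O(\la_N^{-1})$. Thus $|\eta_z|=|{-1}+2q\la_N^{-1/2}+O(\la_N^{-1})|$, and since $\Re(q\cdot(-\overline{1}))=-\Re q<0$ when... — more carefully, $1-|\eta_z|^2 = 1-\bigl|1-2q/p+O(\la_N^{-1})\bigr|^2 = 4\Re(q/p)+O(\la_N^{-1}) = 4\Re(q)\la_N^{-1/2}+O(\la_N^{-1})$, and $\Re(q)=\Re\sqrt{\la_1+z}>0$ by the branch choice, so $1-|\eta_z|^2\ge c'\la_N^{-1/2}$ for large $\la_N$, whence $|\eta_z|\le 1-c\la_N^{-1/2}$ with $c=c(z,\la_1)>0$.

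The main obstacle I anticipate is the first step: justifying that the maximum of $|T_n|$ over the perturbed segment $[-1,1]+w$ is attained at an endpoint and equals $|T_n(s_z)|$, i.e., getting the ratio $\max_{\la}|Q_n(z+\la)|\le|T_n(s_z)|^{-1}$ cleanly rather than with an extra constant. The honest fix, if an endpoint argument is awkward, is to invoke the maximum principle on the image region: $Q_n$ composed with the Joukowski-type map shows $|Q_n|$ on $z+[\la_1,\la_N]$ is bounded by its value where the confocal ellipse through the segment is largest, which is the point farthest from the focal interval, namely $\zeta=0$ — giving precisely $|Q_n(0)|=1$. Everything else is bookkeeping with branch cuts, which the stated convention $\arg\sqrt{\la_j+z}\in(-\tfrac12\pi,\tfrac12\pi)$ is designed to make routine.
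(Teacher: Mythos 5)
Your second and third parts (the factorization $T_n(s_z)=\tfrac12(\eta_z^n+\eta_z^{-n})$, the computation $|\eta_z|<1$, and the asymptotic expansion giving $|\eta_z|\le 1-c\la_N^{-1/2}$) are essentially correct and match the paper's argument. However, your first step — the Tchebyshev bound — has a genuine gap, and it is exactly the one you anticipated.

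The problem is your choice of $Q_n$. You use the classical affine map
$s(\zeta)=(\la_1+\la_N-2\zeta)/(\la_N-\la_1)$, which sends the \emph{real} interval $[\la_1,\la_N]$ to $[-1,1]$. But the relevant set for Theorem~\ref{t4.1} is the shifted segment $\{z+\la:\la\in\si(A)\}\subset[z+\la_1,z+\la_N]$, and under your map this goes to the horizontal segment $[-1,1]-2z/(\la_N-\la_1)$, which lies off the real axis since $\Im z\ne0$. On that segment $|T_n|$ is not bounded by~$1$, and your proposed "confocal ellipse" argument to compare the max there with the value at $\zeta=0$ is not valid: $\zeta=0$ is not in the segment, and the maximum of the modulus of an analytic function on a segment is attained \emph{on} the segment, not at an exterior point. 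Moreover, your "$s_z$" (the argument at $\zeta=0$ under your map, namely $(\la_1+\la_N)/(\la_N-\la_1)$, a real number) does not even agree with the theorem's $s_z=-(\la_1+\la_N+2z)/(\la_N-\la_1)$, which is a clear warning sign that the map is the wrong one.

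The fix is to choose the affine map that sends the \emph{shifted} segment $[\la_1+z,\la_N+z]$ to the real interval $[-1,1]$, rather than sending $[\la_1,\la_N]$ to $[-1,1]$. Concretely, take
\[
s(\tau)=\frac{2\tau-\la_1-\la_N-2z}{\la_N-\la_1}
=-\frac{\la_1+\la_N+2(z-\tau)}{\la_N-\la_1},
\qquad
Q_n(\tau)=\frac{T_n(s(\tau))}{T_n(s(0))}.
\]
Then $s(\la_1+z)=-1$, $s(\la_N+z)=1$, and for $\tau=z+\la$ with $\la\in[\la_1,\la_N]$ the value $s(\tau)=(2\la-\la_1-\la_N)/(\la_N-\la_1)$ is real and lies in $[-1,1]$, so $|T_n(s(\tau))|\le1$ trivially. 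Meanwhile $s(0)=s_z$ as in the theorem statement, which may be complex. Thus $\max_{\la\in\si(A)}|Q_n(z+\la)|\le1/|T_n(s_z)|$, and the first bound follows from Theorem~\ref{t4.1} with no maximum-principle gymnastics needed. The key structural insight you missed is that the complex shift $z$ must be absorbed into the affine change of variables so that $T_n$ is evaluated at a real argument over the spectrum; with your map, the shift instead deforms the segment, which is exactly what makes the estimate hard.
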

\begin{proof}
The linear change of variables
$s\to\tau$ in the complex plane,
\[
\tau=\tfrac12\big((1-s)(\la_1+z)+(1+s)(\la_N+z)\big),
\]
takes the real interval $[-1,1]$
 onto the segment $[\la_1+z,\la_N+z]$, parallel to the real axis.
We note that
$\tau=0$ when $s=s_z$,
 so that,
if we define
\[
Q_n(\tau):=\frac{T_n(s)}{T_n(s_z)},
\with s=-\frac{\la_1+\la_N+2(z-\tau)}{\la_N-\la_1},
\]
then $Q_n(\tau)\in\mathbb P_n$ and $Q_n(0)=1$. We thus  have
\[
\max_{\la\in[\la_1,\la_N]}|Q_n(\la+z)|=
\max_{\tau\in[\la_1+z,\la_N+z]}|Q_n(\tau)|=
\max_{-1\le s\le1}
\frac{|T_n(s)|}{|T_n(s_z)|}=\frac1{|T_n(s_z)|},
\]
and hence the first statement of the theorem follows by Theorem \ref{t4.1}.

Defining $\theta$ by $\cosh\theta=\tfrac12(e^{\theta}+e^{-\theta})=s_z$
and letting $\eta_z=e^\theta$, we have
\[
T_n(s_z)=T_n(\cosh\theta)=\cosh(n\theta)=\tfrac12(\eta_z^n+\eta_z^{-n}).
\]
Here $\eta_z$ satisfies the quadratic equation  
$\eta_z+\eta_z^{-1}=2s_z$, with roots 
\[
\eta_{z,\pm}(s_z)=
s_z\mp\sqrt{s_z^2-1}=-\tfrac12\big(\sqrt{-s_z+1}\pm\sqrt{-s_z-1}\big)^2.
\]
Setting $\eta_z=\eta_{z,-}(s_z)$, we find
\begin{align*}
\eta_z=-\frac{\sqrt{-s_z+1}-\sqrt{-s_z-1}}{\sqrt{-s_z+1}+\sqrt{-s_z-1}},
\end{align*}
and the stated formula for~$\eta_z$ follows because
$-s_z+1=2(\la_N+z)/(\la_N-\la_1)$ and 
$-s_z-1=2(\la_1+z)/(\la_N-\la_1)$. Furthermore, writing
$\sqrt{-s_z\pm1}=a_\pm+ib_\pm$ we have $a_\pm>0$ with the sign of~$b_+$
the same as that of~$b_-$.  Thus,
\[
|\eta_z|^2=\frac{(a_+-a_-)^2+(b_+-b_-)^2}{
(a_++a_-)^2+(b_++b_-)^2}<1,
\]
and to complete the proof we put
$\kappa_z:=(\lambda_N+z)/(\lambda_1+z)=O(\lambda_N)$ and use
\[
\eta_z=\frac{\sqrt{\kappa_z}-1}{\sqrt{\kappa_z}+1}=
        \frac{1-\kappa_z^{-1/2}}{1+\kappa_z^{-1/2}}=1-2\kappa_z^{-1/2}
        +O(\kappa_z^{-1}).
\]
\end{proof}

\begin{table}
\renewcommand{\arraystretch}{1.2}
\begin{center}
\caption{Error reduction by CG iteration.}
\label{tab3}
\begin{tabular}{rrr|rrr|rr} 
\multicolumn{1}{c}{$j$}& 
\multicolumn{1}{c}{$x_j$}& 
\multicolumn{1}{c|}{$y_j$}& 
\multicolumn{1}{c}{$|\eta_z|$}& 
\multicolumn{1}{c}{$|\wt \eta_z|$}& 
\multicolumn{1}{c|}{$\mu_z$}&
\multicolumn{1}{c}{$|\wt \eta_z|$}& 
\multicolumn{1}{c}{$\mu_z$}\\
\hline
  0&   0.00&   0.00&  0.9687&  0.0000&  0.000&  0.0000&   0.00\\
  2&  -0.05&   0.30&  0.9690&  0.0762&  0.002&  0.0762&   0.00\\
  4&  -0.18&   0.64&  0.9699&  0.1650&  0.031&  0.1652&   0.00\\
  6&  -0.43&   1.02&  0.9708&  0.2698&  0.165&  0.2724&   0.00\\
  8&  -0.81&   1.51&  0.9711&  0.3749&  0.507&  0.3880&   0.00\\
 10&  -1.35&   2.12&  0.9703&  0.4605&  1.138&  0.4948&   0.00\\
 12&  -2.10&   2.93&  0.9686&  0.5221&  2.119&  0.5839&   0.00\\
 14&  -3.13&   4.01&  0.9659&  0.5646&  3.530&  0.6553&   0.00\\
 16&  -4.54&   5.45&  0.9622&  0.5939&  5.492&  0.7121&   0.00\\
 18&  -6.45&   7.38&  0.9577&  0.6143&  8.183&  0.7577&   0.00\\
 20&  -9.02&   9.97&  0.9523&  0.6287& 11.850&  0.7946&   0.00\\

\hline
   & -20.00&  20.00&  0.9364&  0.6570& 26.894&  0.8628&   0.00\\
\end{tabular}
\end{center}
\end{table}

Since $|\eta_z|<1$, it follows that $|T_n(s_z)|^{-1}\approx 2|\eta_z|^n$,
and so Theorem~\ref{t3.2} shows linear convergence 
with approximately this rate.  When $A=L_h$, so that 
$\lambda_N\approx ch^{-2}$, the error bound is thus of order $(1-ch)^n$.
The values of~$|\eta_z|$ 
shown in Table~\ref{tab3} refer to the model problem from Section~\ref{sec5},
for which $\lambda_1\approx1$ and $\lambda_N\approx4,000$.  Comparing
the $|\eta_z|$ with the corresponding values of~$\ep_z$ in 
Table~\ref{tab1} confirms the superiority of the CG method over the
Richardson iteration (without preconditioning).

\medskip

We now seek to precondition the CG
method applied to~\eqref{cg1}, and consider first the special 
preconditioner $B_z=(\mu_zI+A)^{-1}$.
We multiply \eqref{cg1} by
$\wt z:=(z-\mu_z)^{-1}$ and $B_z$
to write the equation in the form
\begin{equation}
\label{3.8}
\wt zw+B_zw=\wt z\,B_zg,
\end{equation}
in which thus $\wt z$ and $ B_z$ 
play the roles previously taken by
$z$~and $A$. In particular,
the Krylov subspaces are now
\begin{equation}\label{3.vnt}
V_n=\vecspan\{ r_0,B_z r_0,\dots,B_z^{n-1} r_0\},
\with  r_0=\wt z\,B_z g-(\wt zI+B_z)w_0,
\end{equation}
and the iterates are defined by
\begin{equation}\label{3.teq}
\bigl((\wt zI+B_z) w_n,\fy\bigr)=(\wt z\,B_zg,\fy),
	\quad\forall \fy\in  V_n,\quad w_n=w_0+ v_n,\  v_n\in V_n.
\end{equation}
The  earlier analysis  remains valid, with~$s_z$ now replaced by
\[
\wt s_z:=-\frac{\wt\la_1+ \wt\la_N+ 2\wt z}{\wt \la_N-\wt\la_1},
\with \wt \la_j:=(\mu_z+\la_{N+1-j})^{-1},\ j=1,\,N,
\]
and correspondingly for $\eta_z$. Theorem~\ref{t3.2} then shows that the error
reduction factor is bounded away from~$1$, independently of~$\lambda_N$.

\begin{theorem}\label{2}
For the CG method~\eqref{3.teq} applied to equation~\eqref{3.8}, and
for the norm $\tb{v}^2= | \wt z|\,\|v\|^2 + (B_zv,v)$,
we have
\[ \tb{e_n} \le \sec(\tfrac12\phi)\, |T_n(\wt s_z)|^{-1}\, \tb{e_0}, 
\quad\text{with}\quad T_n(\wt s_z)=\tfrac12(\wt \eta_z^n+\wt \eta_z^{-n}),
\]
where
\begin{equation}\label{3.wt} 
\wt \eta_z:=-\frac{\sqrt{\wt\la_N+\wt z}-\sqrt{\wt\la_1+\wt z}}%
{\sqrt{\wt\la_N+\wt z}+\sqrt{\wt\la_1+\wt z}}
\quad\text{and}\quad |\wt \eta_z|\le c(z,\lambda_1,\mu_z)<1.
\end{equation}
\end{theorem}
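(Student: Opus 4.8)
The plan is to recognize equation~\eqref{3.8} as an instance of~\eqref{cg1} (hence of~\eqref{1.0}) in which $z$ is replaced by $\wt z=(z-\mu_z)^{-1}$ and $A$ by $B_z=(\mu_zI+A)^{-1}$, and then to invoke the results of this section verbatim for that data. One first checks the hypotheses: $B_z$ is positive definite and Hermitian because $\mu_z>-\la_1$, and $\arg\wt z=-\arg(z-\mu_z)$ lies in $(-\pi,\pi)$ (automatically when $\Im z\ne0$), so $\wt z\notin-\si(B_z)$. The Krylov subspaces~\eqref{3.vnt} and iterates~\eqref{3.teq} are then precisely the CG subspaces and iterates~\eqref{cg2} for the operator $B_z$ with shift $\wt z$, and Lemma~\ref{l4.1}, Proposition~\ref{l4.2}, Theorem~\ref{t4.1} and Theorem~\ref{t3.2} apply with no change beyond this relabeling; in particular the relevant norm becomes $\tb v^2=|\wt z|\,\|v\|^2+(B_zv,v)$ and the angle is $\phi=\arg\wt z$.

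Next I would carry out the spectral bookkeeping. Since $A$ has eigenvalues $\la_1\le\cdots\le\la_N$, the operator $\mu_zI+A$ has the positive eigenvalues $\mu_z+\la_j$, so $B_z$ has eigenvalues $(\mu_z+\la_j)^{-1}$; listing these in increasing order reverses the labelling, whence $\la_j(B_z)=(\mu_z+\la_{N+1-j})^{-1}=\wt\la_j$. Substituting $\wt\la_1,\wt\la_N,\wt z$ for $\la_1,\la_N,z$ in Theorem~\ref{t3.2} produces exactly $\wt s_z=-(\wt\la_1+\wt\la_N+2\wt z)/(\wt\la_N-\wt\la_1)$, the factorization $T_n(\wt s_z)=\tfrac12(\wt\eta_z^n+\wt\eta_z^{-n})$, and the formula~\eqref{3.wt} for $\wt\eta_z$ in terms of $\sqrt{\wt\la_N+\wt z}$ and $\sqrt{\wt\la_1+\wt z}$; the principal branch of these square roots is legitimate because $\wt\la_j+\wt z$ has imaginary part $\Im\wt z\ne0$ and so lies off $(-\infty,0]$. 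The asserted bound $\tb{e_n}\le\sec(\tfrac12\phi)\,|T_n(\wt s_z)|^{-1}\tb{e_0}$ then follows at once.

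The one genuinely new point --- the analogue of what separates Theorem~\ref{th2.3} from Theorem~\ref{th1} --- is that $|\wt\eta_z|$ is bounded away from~$1$ by a constant independent of~$\la_N$. The computation in the proof of Theorem~\ref{t3.2} already gives $|\wt\eta_z|<1$ whenever $\wt\ka_z:=(\wt\la_N+\wt z)/(\wt\la_1+\wt z)\notin(-\infty,0]$, and this holds for every $\la_N$ here: $\wt\la_1+\wt z$ never vanishes (its imaginary part is $\Im\wt z\ne0$), and were $\wt\ka_z$ real, comparison of imaginary parts in $\wt\la_N+\wt z=\wt\ka_z(\wt\la_1+\wt z)$ would force $\wt\ka_z=1$. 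For uniformity I would note that $\wt\la_1=(\mu_z+\la_N)^{-1}$ decreases to~$0$ as $\la_N\to\infty$ while $\wt\la_N=(\mu_z+\la_1)^{-1}$ is fixed, so that $\wt\eta_z$ is a continuous function of $\wt\la_1$ on the compact interval $[0,\wt\la_N]$ with $|\wt\eta_z|<1$ at every point of it; hence $\max_{0\le\wt\la_1\le\wt\la_N}|\wt\eta_z|=c(z,\la_1,\mu_z)<1$, and $\wt\la_1=(\mu_z+\la_N)^{-1}\in(0,\wt\la_N]$ for every admissible~$\la_N$. (Alternatively, in the spirit of the proof of Theorem~\ref{th2.3}, one may enlarge the spectral interval $[\wt\la_1,\wt\la_N]$ to the $\la_N$-independent interval $[0,\wt\la_N]$ when choosing the comparison polynomial, obtaining a bound of the same shape with an explicit $c(z,\la_1,\mu_z)<1$.) I expect the main --- and only modest --- obstacle to be organizational: keeping the substitution $z\mapsto\wt z$, $A\mapsto B_z$, $\la_j\mapsto\wt\la_j$ consistent throughout and checking the branch choices, followed by the short continuity argument above; there is no new analytic difficulty.
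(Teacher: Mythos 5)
Your proposal is correct and takes the same approach the paper does: the paper gives no separate proof of Theorem~\ref{2}, instead remarking immediately before the theorem statement that ``the earlier analysis remains valid'' under the substitution $z\mapsto\wt z$, $A\mapsto B_z$, $\lambda_j\mapsto\wt\lambda_j$, and your write-up is exactly this substitution carried out explicitly, with the spectral bookkeeping $\lambda_j(B_z)=(\mu_z+\lambda_{N+1-j})^{-1}$ and the branch check for $\sqrt{\wt\lambda_j+\wt z}$. The only place you add something the paper leaves implicit is the uniformity claim $|\wt\eta_z|\le c(z,\lambda_1,\mu_z)<1$: the paper simply asserts that Theorem~\ref{t3.2} ``shows'' the reduction factor is bounded away from~$1$ independently of~$\lambda_N$, whereas you supply a short continuity-on-a-compact-interval argument (and note the alternative of enlarging the spectral interval to $[0,\wt\lambda_N]$, mirroring the device used in Theorem~\ref{th2.3}), which is a reasonable and correct way to make the paper's remark precise.
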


We want to discuss how to choose $\mu_z$ to minimize $|\wt \eta_z|$
for a given~$z$.
In practice we are only interested in $z=z_j$ 
with~$\Re z_j\ge\Re z_q\approx-q/2$ and $q\ll\lambda_N$, so
the assumption $|z+\la_N|>|z+\la_1|$ is not restrictive.
We show the following.

\begin{lemma}\label{3.muopt}
Let $z$ be fixed with $|z+\la_N|>|z+\la_1|$.
Then  $|\wt \eta_z|$,
defined in \eqref{3.wt}, is as small as possible for $\mu_z>-\la_1$
when
\[
\mu_z=-\la_1+\frac{q_z}{1-q_z}\,(\lambda_N-\lambda_1)>-\la_1,
\where q_z:=\biggl|\frac{z+\la_1}{z+\la_N}\biggr|<1.
\]
\end{lemma}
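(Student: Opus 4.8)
The plan is to reduce the minimization of $|\wt\eta_z|$ to a one-variable calculus problem in the quantity $\wt\kappa_z:=(\wt\la_N+\wt z)/(\wt\la_1+\wt z)$, using the observation already exploited in the proof of Theorem~\ref{t3.2}: writing $\wt\eta_z=(\sqrt{\wt\kappa_z}-1)/(\sqrt{\wt\kappa_z}+1)$, the map $\wt\kappa\mapsto|\wt\eta_z|$ is monotone, so minimizing $|\wt\eta_z|$ is the same as maximizing the distance of $\sqrt{\wt\kappa_z}$ from $-1$ relative to its distance from $1$, i.e.\ pushing $\wt\kappa_z$ as close as possible to the positive real axis (ideally making it a large positive real number). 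So the first step is to express $\wt\kappa_z$ explicitly as a function of $\mu_z$. Since $\wt\la_1=(\mu_z+\la_N)^{-1}$, $\wt\la_N=(\mu_z+\la_1)^{-1}$ and $\wt z=(z-\mu_z)^{-1}$, a short computation gives $\wt\la_N+\wt z=(\mu_z+\la_1)^{-1}(z-\mu_z)^{-1}(z+\la_1)$ and likewise $\wt\la_1+\wt z=(\mu_z+\la_N)^{-1}(z-\mu_z)^{-1}(z+\la_1)$ with $\la_1$ replaced by $\la_N$, so that
\[
\wt\kappa_z=\frac{\mu_z+\la_N}{\mu_z+\la_1}\cdot\frac{z+\la_1}{z+\la_N}.
\]
The first factor is a real number, ranging over $(1,\infty)$ as $\mu_z$ runs over $(-\la_1,\infty)$ (it equals $1+(\la_N-\la_1)/(\mu_z+\la_1)$), while the second factor $w_z:=(z+\la_1)/(z+\la_N)$ is a fixed complex number of modulus $q_z<1$ by hypothesis.

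The second step is to analyze $|\wt\eta_z|$ as a function of $t:=(\mu_z+\la_N)/(\mu_z+\la_1)\in(1,\infty)$, where $\wt\kappa_z=t\,w_z$. Using $|\wt\eta_z|^2=\bigl|(\sqrt{\wt\kappa_z}-1)/(\sqrt{\wt\kappa_z}+1)\bigr|^2=(|\wt\kappa_z|+1-2\Re\sqrt{\wt\kappa_z})/(|\wt\kappa_z|+1+2\Re\sqrt{\wt\kappa_z})$, we see that $|\wt\eta_z|$ is a decreasing function of the ratio $\Re\sqrt{\wt\kappa_z}/(|\wt\kappa_z|+1)$, so we want to maximize $g(t):=\Re\sqrt{t\,w_z}/(t|w_z|+1)=\sqrt{t}\,\Re\sqrt{w_z}/(t\,q_z+1)$, where $\Re\sqrt{w_z}>0$ under the branch convention in~\eqref{3.wt}. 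Differentiating $\sqrt t/(q_z t+1)$ and setting the derivative to zero gives $q_z t+1=2q_z t$, i.e.\ $t=1/q_z$. Since $q_z<1$ we have $t=1/q_z>1$, so the critical point is admissible, and it is a maximum because $g\to0$ at both ends of $(1,\infty)$ — wait, actually $g$ is positive throughout and $\to0$ only as $t\to\infty$, while at $t=1^+$ it is positive; one checks $g'$ changes sign from $+$ to $-$ at $t=1/q_z$, so this is the unique interior maximum, and it beats the endpoint value at $t=1$ precisely when $1/q_z>1$, which holds. Translating $t=1/q_z$ back via $t=1+(\la_N-\la_1)/(\mu_z+\la_1)$ yields $(\la_N-\la_1)/(\mu_z+\la_1)=1/q_z-1=(1-q_z)/q_z$, hence $\mu_z+\la_1=q_z(\la_N-\la_1)/(1-q_z)$, i.e.\ the stated formula; positivity of $\mu_z+\la_1$, and hence $\mu_z>-\la_1$, is immediate from $0<q_z<1$.

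I would organize the write-up as: (i) derive the closed form for $\wt\kappa_z=t\,w_z$ with $t=(\mu_z+\la_N)/(\mu_z+\la_1)$; (ii) record the monotonicity $|\wt\eta_z|$ decreasing in $\Re\sqrt{\wt\kappa_z}/(|\wt\kappa_z|+1)$ and reduce to maximizing $\sqrt t/(q_z t+1)$; (iii) do the one-line differentiation to get $t=1/q_z$ and note $t>1$; (iv) invert to obtain $\mu_z$. The main obstacle — really the only subtle point — is justifying that the branch convention $\arg\sqrt{\wt\la_j+\wt z}\in(-\tfrac12\pi,\tfrac12\pi)$ used in Theorem~\ref{t3.2} is consistent along the whole family $\mu_z\in(-\la_1,\infty)$, so that the formula $\wt\eta_z=(\sqrt{\wt\kappa_z}-1)/(\sqrt{\wt\kappa_z}+1)$ with $\sqrt{\wt\kappa_z}$ in the right half-plane is valid and $\Re\sqrt{w_z}>0$; this needs the observation that $\wt\la_j+\wt z$ is a positive real multiple of $z+\la_1$ (for $j=N$) or $z+\la_N$ (for $j=1$), both of which lie in the right half-plane since $\arg z\in(-\pi,\pi)$ and $\la_1,\la_N>0$, so their square roots, and the square root of the ratio $\wt\kappa_z$, stay in the open right half-plane throughout. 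Everything else is routine single-variable calculus.
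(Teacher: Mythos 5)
Your proof is correct and follows the same route as the paper: reduce $|\wt\eta_z|$ to a one-variable function of a real parameter and find the critical point by calculus. The parametrizations differ only trivially — you use $t=(\mu_z+\la_N)/(\mu_z+\la_1)\in(1,\infty)$ and find $t=1/q_z$, while the paper uses $\tau=\sqrt{(\mu_z+\la_1)/(\mu_z+\la_N)}=1/\sqrt t$ and finds $\tau^2=q_z$; the paper writes $|\wt\eta_z|^2=1-4\psi(\tau)$ with $\psi(\tau)=\xi_1\tau/((\xi_1+\tau)^2+\xi_2^2)$, which is your expression rewritten after substituting $\sqrt{w_z}=\xi_1+i\xi_2$, $q_z=\xi_1^2+\xi_2^2$, and $t=1/\tau^2$.

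One slip in your final paragraph on branches: $\wt\la_j+\wt z$ is \emph{not} a positive real multiple of $z+\la_{N+1-j}$. In fact $\wt\la_N+\wt z=(z+\la_1)/\bigl((\mu_z+\la_1)(z-\mu_z)\bigr)$, which carries the genuinely complex factor $1/(z-\mu_z)$; and $z+\la_j$ need not lie in the right half-plane (for the model problem $z_{20}\approx-9+10i$ with $\la_1\approx1$). What actually saves the argument is precisely the cancellation you exploit in writing $\wt\kappa_z=t\,w_z$: the factor $1/(z-\mu_z)$ cancels, leaving a positive real multiple of $w_z=(z+\la_1)/(z+\la_N)$. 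Since $z+\la_1$ and $z+\la_N$ have the same imaginary part, their arguments differ by less than $\pi$, so $w_z$ never lies on the negative real axis; hence the principal branch gives $\Re\sqrt{w_z}>0$, and $\sqrt{t\,w_z}=\sqrt t\,\sqrt{w_z}$ for $t>0$. The paper simply asserts $\xi_1>0$ without comment; your instinct to address this subtlety was good, but the stated justification does not hold.
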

\begin{proof}
It follows from~\eqref{3.wt} that
\[
\wt \eta_z
=-\frac{\sqrt{(z+\la_1)/(z+\la_N)}-\sqrt{(\mu_z+\la_1)/(\mu_z+\la_N)}}
       {\sqrt{(z+\la_1)/(z+\la_N)}+\sqrt{(\mu_z+\la_1)/(\mu_z+\la_N)}},
\]
so with $\xi_1+i\xi_2:=\sqrt{(z+\la_1)/(z+\la_N)}$ and
$\tau:=\sqrt{(\mu_z+\la_1)/(\mu_z+\la_N)}$, we obtain
\[
|\wt \eta_z|^2=\frac{(\xi_1-\tau)^2+\xi_2^2}
{(\xi_1+\tau)^2+\xi_2^2}=1-4\psi(\tau),\where
\psi(\tau):=\frac{\xi_1\tau}{(\xi_1+\tau)^2+\xi_2^2}.
\]
Here, $\xi_1>0$ and we want to choose $\tau>0$ so that $\psi(\tau)$ is 
as large as possible. A short calculation shows that $\psi'(\tau)=0$ implies
$(\xi_1+\tau)^2+\xi_2^2=2(\xi_1+\tau)\tau$, or
$\tau^2=\xi_1^2+\xi_2^2$.  Thus, the maximum
is attained when $(\mu_z+\la_1)/(\mu_z+\la_N)=q_z$, or equivalently when
$\mu_z=-\la_1+(\la_N-\la_1)\,q_z/(1-q_z)$.
\end{proof}

Note that $\mu_z$ tends to $|z+\la_1|-\la_1$ as $\lambda_N$ tends to infinity.

Table \ref{tab3} includes some values of $|\wt \eta_z|$, first 
for the optimal~$\mu_z$ determined by Lemma~\ref{3.muopt}, 
and then (in the final column) for~$\mu_z=0$.  
Comparing the 
$|\wt\eta_z|$ with the corresponding values of~$\wt\ep_z$ in 
Table~\ref{tab1}, we see that, once again, the CG method
is always superior to the Richardson iteration, although in both cases
the preconditioning becomes less effective with increasing~$j$.

\medskip

We now consider a more general preconditioned form of \eqref{1.0}, as in
\eqref{2.10}, where $B_z$ is an Hermitian positive definite operator, so
that the equation may now be written 
\begin{equation}\label{3.gprec}
G_zw=\wt g_z:=B_zg, \where G_z=B_zA_z=zB_z+B_zA.
\end{equation}
Note that $B_z$ and $B_zA$ are Hermitian with respect 
to~$[v,w]:=(B_z^{-1}v,w)$.  We now  define
the Krylov subspaces by
\begin{equation}
\label{3.kryt}
\wt V_n:=\vecspan\{\wt r_0, G_z\wt r_0, \dots, G_z^{n-1}\wt r_0\},
\where \wt r_0:=\wt g_z-G_zw_0=B_zr_0,
\end{equation}
and the CG iterates~$w_n$ by
\begin{equation}\label{6.eq}
(A_z w_n,\fy)
=(g,\fy),\quad\forall\ \fy\in \wt V_n,
\quad\text{where $w_n=w_0+v_n$ with $v_n\in\wt V_n$,}
\end{equation}
or equivalently,
\[
[G_zw_n,\fy]=[\wt g_z,\fy],\quad \forall \fy\in \wt V_n,
\quad\text{where $w_n=w_0+v_n$ with $v_n\in\wt V_n$.}
\]

The existence and uniqueness of~$w_n$ follow as before,
and the inequalities in Lemma~\ref{l4.1} remain valid,
with $\tb{\cdot}$ defined in \eqref{3.tbdef}.
The error again satisfies an orthogonality property,
\[
(A_ze_n,\fy)=0,\quad\forall\fy\in \wt V_n,
\]
and the following quasi-optimality result and its proof carry over verbatim.

\begin{proposition}\label{p6.1}
Let $w$ and $w_n$ be the solutions of \eqref{cg1}~and \eqref{6.eq}.
Then
\[
\tb{w_n-w}\le\sec(\tfrac12\phi)
\inf_{v\in w_0+\wt V_n}\tb{v-w},
\for  \phi=\arg z\in(-\pi,\pi).
\]
\end{proposition}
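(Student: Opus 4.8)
The plan is to mimic exactly the proof of Proposition~\ref{l4.2}, since the statement is identical in form and the excerpt explicitly says ``the following quasi-optimality result and its proof carry over verbatim.'' First I would recall that the generalized CG iterate~$w_n$ defined by \eqref{6.eq} satisfies the Galerkin equation $(A_zw_n,\fy)=(g,\fy)$ for all $\fy\in\wt V_n$, and since $A_zw=g$ exactly, subtracting gives the orthogonality relation $(A_ze_n,\fy)=0$ for all $\fy\in\wt V_n$, where $e_n=w_n-w$. This is the key structural fact, and it holds regardless of which inner product is used to set up the method, because the defining equation \eqref{6.eq} is written in the ambient product $(\cdot,\cdot)$.

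Next I would fix an arbitrary $v\in w_0+\wt V_n$, so that $v-w_n\in\wt V_n$ and hence $(A_ze_n,v-w_n)=0$ by the orthogonality relation. Then I would write $e_n=w_n-w=(w_n-v)+(v-w)$ and use this to replace $w_n-w$ by $v-w$ in one slot: $(A_ze_n,e_n)=(A_ze_n,v-w_n)+(A_ze_n,v-w)=(A_ze_n,v-w)$. Now apply the two inequalities from Lemma~\ref{l4.1}: the lower bound gives $\cos(\tfrac12\phi)\tb{e_n}^2\le|(A_ze_n,e_n)|$, and the upper (Cauchy--Schwarz-type) bound gives $|(A_ze_n,v-w)|\le\tb{e_n}\,\tb{v-w}$. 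Combining, $\cos(\tfrac12\phi)\tb{e_n}^2\le\tb{e_n}\,\tb{v-w}$, so dividing by $\tb{e_n}$ (the case $e_n=0$ being trivial) yields $\tb{e_n}\le\sec(\tfrac12\phi)\tb{v-w}$. Taking the infimum over $v\in w_0+\wt V_n$ gives the claimed bound.

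There is essentially no obstacle here: the only points requiring a word of care are that Lemma~\ref{l4.1} applies unchanged because $\tb{\cdot}$ is the same norm \eqref{3.tbdef} as before and $A_z=zI+A$ is unchanged (the preconditioner $B_z$ enters only through the choice of Krylov space $\wt V_n$, not through the operator or norm in the quasi-optimality estimate), and that existence and uniqueness of $w_n$ — needed to make the statement meaningful — follow as in the unpreconditioned case from positive-definiteness of $A_z$ restricted to the relevant subspace, exactly as argued after \eqref{cg2}. Thus the proof is a verbatim repetition of that of Proposition~\ref{l4.2}, and I would simply write: ``The proof is identical to that of Proposition~\ref{l4.2}, using the orthogonality relation $(A_ze_n,\fy)=0$ for $\fy\in\wt V_n$ together with Lemma~\ref{l4.1}.''
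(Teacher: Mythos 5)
Your proof is correct and follows the paper's approach exactly: the paper itself notes that the orthogonality relation $(A_ze_n,\fy)=0$ for $\fy\in\wt V_n$ holds as before and that "the quasi-optimality result and its proof carry over verbatim" from Proposition~\ref{l4.2}, which is precisely what you argue. (A trivial slip: in the decomposition you write $(A_ze_n,v-w_n)$ where the correct term is $(A_ze_n,w_n-v)$ from $e_n=(w_n-v)+(v-w)$, but since that term vanishes by orthogonality the conclusion is unaffected.)
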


The proof of the error bound of Theorem~\ref{t4.1} does not remain valid, 
in general, because of the presence of the operator $B_z$ in the 
definition of the Krylov spaces $\wt V_n$, 

\section{Practical implementation of the conjugate gradient method}
\label{sec: implementation}
We first derive an algorithm for computing the  iterates~$w_n$ 
in the basic CG method \eqref{cg2} of Section~\ref{sec3}.
In doing so, we make repeated use of the following result.

\begin{lemma}\label{lem: orthog resid}
If $1\le n\le N=\dim(V)$ then the residual~$r_n=g-A_zw_n$ for
\eqref{cg2} satisfies
\[
r_n\in V_{n+1},\andy\
(r_n,\fy)=0,\quad\forall\ \fy\in V_n.
\]
If $r_0\ne0$,
there exists $N^*\le N$ such that $r_n\ne0$ for $0\le n<N^*$, and
$r_n=0$ for $n\ge N^*$.
\end{lemma}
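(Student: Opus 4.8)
The plan is to exploit the Galerkin definition~\eqref{cg2} together with the nesting $V_n\subseteq V_{n+1}$. First I would verify $r_n\in V_{n+1}$: since $w_n=w_0+v_n$ with $v_n\in V_n$, and $V_n$ is $A$-invariant in the sense that $A_zV_n\subseteq V_{n+1}$ (because $A_z^nr_0\in V_{n+1}$), we get $A_zw_n=A_zw_0+A_zv_n$; using $A_zw_0=g-r_0$ and $r_0\in V_1\subseteq V_{n+1}$, together with $A_zv_n\in V_{n+1}$, it follows that $r_n=g-A_zw_n=r_0-A_zv_n\in V_{n+1}$. For the orthogonality, take any $\fy\in V_n$; then by~\eqref{cg2} we have $(A_zw_n,\fy)=(g,\fy)$, hence $(r_n,\fy)=(g-A_zw_n,\fy)=0$.

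Next I would establish the termination behaviour. Set $N^*:=\min\{n\ge0:r_n=0\}$, which is well defined provided $r_n=0$ for some $n\le N$; this is guaranteed because $\dim V_n$ cannot exceed $N$, so the chain $V_0\subseteq V_1\subseteq\cdots$ must stabilize, say at step $m\le N$, with $V_m=V_{m+1}$; then $r_m\in V_{m+1}=V_m$ while $(r_m,\fy)=0$ for all $\fy\in V_m$, forcing $(r_m,r_m)=0$, i.e. $r_m=0$, so $N^*\le N$. To see that $r_n\ne0$ for $n<N^*$ is immediate from the definition of $N^*$ as the smallest such index, but one must also check $r_n=0$ for \emph{all} $n\ge N^*$, not just at $n=N^*$. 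For this I would argue that once $r_n=0$ we have $A_zw_n=g$, so $w_n$ solves~\eqref{cg1}; then for any $m\ge n$ the vector $w_n\in w_0+V_n\subseteq w_0+V_m$ is an admissible trial element, and the uniqueness of the Galerkin solution of~\eqref{cg2} (established earlier in Section~\ref{sec3} via the fact that $(A_zv,v)=0\Rightarrow v=0$) gives $w_m=w_n$, whence $r_m=0$.

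I do not anticipate a serious obstacle here; the lemma is essentially bookkeeping with the Krylov chain. The one point requiring a little care is the claim that the Krylov spaces stabilize at some index $m\le N$ with $V_m=V_{m+1}$, and that stabilization is genuinely inherited at all later steps — that is, $V_m=V_{m+1}$ implies $V_m=V_{m+k}$ for all $k\ge1$. This follows because $V_{m+1}=V_m$ means $A^mr_0\in V_m$, hence $A^{m+1}r_0\in A\,V_m\subseteq V_{m+1}=V_m$, and inductively $A^{m+k}r_0\in V_m$, so $V_{m+k}=V_m$. Feeding this back through the uniqueness argument of the previous paragraph closes the proof.
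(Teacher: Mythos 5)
Your proof is correct and follows essentially the same route as the paper's: the residual recursion $r_n=r_0-A_zv_n$ together with $A_zV_n\subset V_{n+1}$ gives membership, Galerkin orthogonality~\eqref{cg2} gives $(r_n,\fy)=0$, and the persistence of $r_n=0$ is obtained from the uniqueness of the Galerkin solution. You add an explicit stabilization argument for the Krylov chain to justify $N^*\le N$, a detail the paper leaves implicit, but the substance is the same.
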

\begin{proof}
The first conclusion is trivial if $r_n=0$, so we may assume $r_n\ne0$.
Since $r_n=g-A_z(w_0+v_n)=r_0-A_zv_n$ and $A_zV_n\subset V_{n+1}$,
we have $r_n\in V_{n+1}$.  The orthogonality property follows at once
from~\eqref{cg2}.
If $r_n=0$ then $w_n=u$ so that, by \eqref{3.ex}~and \eqref{cg2}, $w_j=u$
also for~$j>n$, and thus $r_j=0$ for~$j>n$.
\end{proof}

Lemma~\ref{lem: orthog resid} shows, in particular, that the residuals
$r_0$, $r_1$, \dots, $r_{n-1}$ form an orthogonal basis for the
Krylov space~$V_n$ if $n<N^*$.

We introduce a second sequence of vectors~$p_n$, for $0\le n< N^*$, 
recursively: put $p_0:=r_0$ and, if $p_k\ne0$ 
for $0\le k\le n$, put
\begin{equation}\label{4.pn}
p_{n+1}:=r_{n+1}+\sum_{k=0}^{n}\beta_{nk}p_k,\where\
\beta_{nk}:=-\frac{(A_zr_{n+1},p_k)}{(A_zp_k,p_k)}.
\end{equation}
Here, $\beta_{nk}$ is well-defined since $p_k\ne0$ ensures
$(A_zp_k,p_k)\ne0$.  Also, since $p_0\in V_1$, we have $p_n\in V_{n+1}$
(when defined).
For real $z>0$, the  construction in \eqref{4.pn} amounts to applying
the usual Gramm--Schmidt procedure to construct a new basis for~$V_n$
that is orthogonal with respect to the inner product~$(A_zv,w)$.
For a general complex~$z$, the sesquilinear form~$(A_zv,w)$ is
not an inner product. Even so, we may now show that,
just as for the classical CG method, the sum over~$k$ in \eqref{4.pn}
collapses to include at most one non-zero term.

\begin{lemma}\label{lem: beta_n}
Assume $r_0\ne0$. Then $p_n\in V_{n+1}$ is well defined by \eqref{4.pn} for
$0\le n<N^*$, and $p_n\not\in V_n$, so that
$V_{n+1}=\text{span}\,\{p_0,\dots,p_n\}.$
If $n\ge1$ we have $\beta_{nk}=0$ for $0\le k\le n-1$.
It follows that, recursively,
for $n+1<N^*,$ 
\begin{equation}\label{4.pstep}
p_{n+1}=r_{n+1}+\beta_n p_{n},\where\
\beta_n:=\beta_{n,n}=-\frac{(r_{n+1},A_zp_{n})}{(A_zp_{n},p_{n})}.
\end{equation}
We also have $(A_zp_n,p_k)=0$ for $0\le k\le n-1$, and hence
$(A_zp_n,\fy)=0$ for $\fy\in V$.
\end{lemma}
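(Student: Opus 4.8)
The plan is to prove Lemma~\ref{lem: beta_n} by induction on~$n$, following the structure of the classical CG analysis but keeping careful track of the fact that $(A_zv,w)$ is only a sesquilinear form, not an inner product. First I would establish that $p_n\in V_{n+1}$ but $p_n\notin V_n$: this follows because $p_n=r_n+\sum_{k<n}\beta_{n-1,k}p_k$ with $r_n\in V_{n+1}\setminus V_n$ (the second part of Lemma~\ref{lem: orthog resid}, valid since $n<N^*$) and $\sum_{k<n}\beta_{n-1,k}p_k\in V_n$ by the inductive hypothesis that $V_n=\vecspan\{p_0,\dots,p_{n-1}\}$. Hence $\{p_0,\dots,p_n\}$ is a basis for $V_{n+1}$, and in particular $p_n\ne0$ so that $(A_zp_n,p_n)\ne0$ (using $(A_zv,v)=z\|v\|^2+(Av,v)\ne0$ for $v\ne0$, as already noted after \eqref{cg2}), which makes $\beta_n$ well defined.

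The heart of the argument is showing $\beta_{nk}=0$ for $0\le k\le n-1$, equivalently $(A_zr_{n+1},p_k)=0$ for those~$k$. The key observation is that $A_z^*p_k=\bar zp_k+Ap_k$, and since $p_k\in V_{k+1}\subseteq V_{n}$ for $k\le n-1$ and $A$ maps $V_m$ into $V_{m+1}$, we get $A_z^*p_k\in V_{k+2}\subseteq V_{n+1}$. Now I would use the residual orthogonality: by Lemma~\ref{lem: orthog resid}, $r_{n+1}\perp V_{n+1}$ in the standard inner product, so $(A_zr_{n+1},p_k)=(r_{n+1},A_z^*p_k)=0$ whenever $A_z^*p_k\in V_{n+1}$, i.e.\ for $k\le n-1$. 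This collapses the sum in \eqref{4.pn} to the single term $k=n$, giving the three-term-style recursion \eqref{4.pstep}; note also $\beta_{n,n}=-(A_zr_{n+1},p_n)/(A_zp_n,p_n)=-\overline{(p_n,A_z r_{n+1})}\big/(A_zp_n,p_n)$—actually I would just rewrite $(A_zr_{n+1},p_n)=(r_{n+1},A_z^*p_n)$ and match the stated form $-(r_{n+1},A_zp_n)/(A_zp_n,p_n)$ by observing that $p_n\in V_{n+1}$ forces the relevant terms to agree, or simply present $\beta_n$ in the form that makes \eqref{4.pstep} transparent.

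For the final claim that $(A_zp_n,p_k)=0$ for $0\le k\le n-1$ (and hence $(A_zp_n,\fy)=0$ for all $\fy\in V$, since $V_{n+1}$ has dimension one more but $p_n\notin V_n$ means... actually $(A_zp_n,\cdot)$ need not vanish on all of $V$—rather the statement must be $(A_zp_n,\fy)=0$ for $\fy\in V_n$), I would again induct: write $p_n=r_n+\beta_{n-1}p_{n-1}$ and expand $(A_zp_n,p_k)=(A_zr_n,p_k)+\beta_{n-1}(A_zp_{n-1},p_k)$. For $k\le n-2$ the second term vanishes by the inductive hypothesis on $p_{n-1}$, and the first vanishes because $A_z^*p_k\in V_{n}$ and $r_n\perp V_n$. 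For $k=n-1$ the combination vanishes by the very definition of $\beta_{n-1}$. The main obstacle—and the only place real care is needed—is handling the adjoint $A_z^*=\bar zI+A$ correctly: the argument $(A_zr_{n+1},p_k)=(r_{n+1},A_z^*p_k)$ and the containment $A_z^*p_k\in V_{k+2}$ is exactly the substitute for the self-adjointness used in the classical proof, and one must verify that $\arg z\in(-\pi,\pi)$ (hence $A_z$ invertible and $(A_zv,v)\ne0$ for $v\ne0$) is all that is needed, with no appeal to positivity of a real inner product.
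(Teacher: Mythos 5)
Your proof is correct and follows essentially the same route as the paper. The one small cosmetic difference is that you work directly with the adjoint~$A_z^*p_k=\bar zp_k+Ap_k\in V_{k+2}\subseteq V_{n+1}$, whereas the paper spells out the equivalent identity $(A_zr_{n+1},\varphi)=(z-\bar z)(r_{n+1},\varphi)+(r_{n+1},A_z\varphi)=(r_{n+1},A_z\varphi)$ for $\varphi\in V_{n+1}$; both collapse to the same use of the residual orthogonality from Lemma~\ref{lem: orthog resid}. You are also right that the final clause of the lemma should read $(A_zp_n,\fy)=0$ for $\fy\in V_n$, not~$V$; this is a typo in the statement (and indeed it is used with $\fy\in V_n$ in the proof of Proposition~\ref{l4.3}).
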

\begin{proof}
We prove the first statement by induction over~$n$.  
To begin with, note that $p_0\ne0$ and $p_0\in V_1$ since $p_0=r_0$.
Let $1\le n<N^*$ and assume $p_k\ne0$ and $p_k\in V_{k+1}$ 
for~$0\le k\le n-1$,  so that $p_n$  is well-defined by~\eqref{4.pn}.
We cannot have $p_n\in V_n$ because then
$r_n=p_n-\sum_{k=0}^{n-1}\beta_{n-1,k}p_k\in V_n$ and so $r_n=0$
by Lemma \ref{lem: orthog resid}, which would
mean that $n\ge N^*$.

We now observe that, by Lemma~\ref{lem: orthog resid},
\[
(A_zr_{n+1},\varphi)=(z-\bar z)(r_{n+1},\varphi)+(r_{n+1},A_z\varphi)
	=(r_{n+1},A_z\varphi)\quad\text{for all $\varphi\in V_{n+1}$,}
\]
so $\beta_{nk}=0$ for $0\le k\le n-1$, and thus \eqref{4.pstep} holds.
We finally show the last statement by induction on~$n$.  For~$n=1$,
the definition of~$\beta_0$ means that
\[
(A_zp_1,p_0)=\bigl(A_z(r_1+\beta_0p_0),p_0\bigr)
	=(A_zr_1,p_0)+\beta_0(A_zp_0,p_0)=0.
\]
Now let $2\le n<N^*$ and 
assume  that
$(A_zp_{n-1},p_k)=0$ for $0\le k\le n-2$. Then, since
$(A_zr_n,\fy)=0$ for~$\fy\in V_{n-1}$,
\[
(A_zp_n,p_k)=(A_zr_n,p_k)+\beta_{n-1}(A_zp_{n-1},p_k)=0,
	\for 0\le k\le n-2,
\]
and we also have 
$(A_zp_n,p_{n-1})=(A_zr_n,p_{n-1})+\beta_{n-1}(A_zp_{n-1},p_{n-1})=0$.
\end{proof}

Using $w_n$ and $p_n$ we may compute $w_{n+1}$ 
as follows,
and hence  $p_{n+1}$  from \eqref{4.pstep}.

\begin{proposition}\label{l4.3}
If $0\le n<N^*$, then
\[
w_{n+1}=w_n+\alpha_np_n,
\where
\alpha_n:=\frac{\|r_n\|^2}{(A_zp_n,p_n)}.
\]
\end{proposition}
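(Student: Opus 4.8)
The plan is to characterize $w_{n+1}$ as the Galerkin solution in the space $w_0 + V_{n+1}$ and then use the $A_z$-orthogonality of the direction vectors $p_k$ established in Lemma~\ref{lem: beta_n}. Since $V_{n+1} = \vecspan\{p_0,\dots,p_n\}$, I can write $w_{n+1} = w_0 + \sum_{k=0}^{n} \gamma_k p_k$ for some coefficients $\gamma_k \in \C$, and the defining equation \eqref{cg2} says $(A_z w_{n+1},\fy) = (g,\fy)$ for all $\fy \in V_{n+1}$; equivalently $(A_z(w_{n+1}-w_0),\fy) = (r_0,\fy)$. Testing with $\fy = p_j$ and using $(A_z p_k, p_j) = 0$ for $k \ne j$ (from the last statement of Lemma~\ref{lem: beta_n}, which gives $(A_z p_k,\fy)=0$ for all $\fy \in V_k$, hence in particular for $k>j$; and by symmetry of the roles one gets the full orthogonality $(A_zp_k,p_j)=0$ whenever $k\ne j$) isolates $\gamma_j = (r_0,p_j)/(A_zp_j,p_j)$. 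Comparing the expansions for $w_{n+1}$ and $w_n$ shows all coefficients $\gamma_k$ for $k\le n-1$ agree, so $w_{n+1} = w_n + \gamma_n p_n$ with $\alpha_n := \gamma_n = (r_0,p_n)/(A_zp_n,p_n)$.

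The remaining task is to simplify the numerator $(r_0, p_n)$ to $\|r_n\|^2$. First I would replace $r_0$ by $r_n$ in the inner product: since $r_0 - r_n = A_z v_n$ with $v_n \in V_n$ and $p_n \in V_{n+1}$, I need $(A_z v_n, p_n)$, but actually it is cleaner to use that $p_n \in V_{n+1}$ and that the residual satisfies an orthogonality — here I should be careful about which slot. Using $r_0 = r_n + A_z v_n$ and $(A_z p_n, \fy) = 0$ for $\fy \in V_n$ (hence $(A_z p_n, v_n) = 0$), I cannot directly conclude because the inner product $(r_0,p_n)$ has $p_n$ in the second slot. Instead I would argue: $(r_0, p_n) = (r_n, p_n) + (A_z v_n, p_n)$, and $(A_z v_n, p_n) = \overline{(A_z^* p_n, v_n)}$... this gets delicate. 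The clean route is to go back to \eqref{4.pstep}: $p_n = r_n + \beta_{n-1} p_{n-1}$, so $(r_n, p_n)$ — wait, I want $(r_0,p_n)$ — let me instead expand $p_n$ in terms of residuals. Since $p_n - r_n \in V_n = \vecspan\{r_0,\dots,r_{n-1}\}$ and, by Lemma~\ref{lem: orthog resid}, $(r_n, \fy) = 0$ for $\fy \in V_n$, and also $r_0,\dots,r_{n-1} \in V_n$ while $(r_j, \fy)=0$ for $\fy \in V_j$, I can show $(r_j, p_n) = 0$ for $j < n$ by induction, using $p_n = r_n + \beta_{n-1}p_{n-1}$ and $p_{n-1} \in V_n$. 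Then $(r_0, p_n) = (r_n, p_n) = (r_n, r_n) + \beta_{n-1}(r_n, p_{n-1}) = \|r_n\|^2$ because $p_{n-1} \in V_n$ and $r_n \perp V_n$.

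So the structure is: (i) expand $w_{n+1}$ in the $p_k$ basis and use Galerkin orthogonality plus $A_z$-orthogonality of the $p_k$ to get $w_{n+1} = w_n + \alpha_n p_n$ with $\alpha_n = (r_0,p_n)/(A_zp_n,p_n)$; (ii) show $(r_j, p_n) = 0$ for $0 \le j < n$ using the recursion \eqref{4.pstep} and Lemma~\ref{lem: orthog resid}; (iii) conclude $(r_0,p_n) = (r_n, p_n) = \|r_n\|^2$ again via \eqref{4.pstep} and $r_n \perp V_n$. The main obstacle is step~(ii)/(iii): keeping track of which slot of the (merely sesquilinear, since $z$ is complex) form $(A_z\cdot,\cdot)$ one is working in, and making sure the orthogonality relations $(r_j,p_n)=0$ use only the genuine inner product $(\cdot,\cdot)$ — where Lemma~\ref{lem: orthog resid}'s conclusion $(r_n,\fy)=0$ for $\fy\in V_n$ applies without any conjugation subtleties — rather than the $A_z$-form. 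Once one commits to proving the residual–direction orthogonality in the plain inner product, everything else is bookkeeping.
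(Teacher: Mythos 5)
Your argument breaks down precisely at the point you flag as delicate: the form $(A_z\cdot,\cdot)$ is not Hermitian (since $A_z^*=\bar zI+A\ne A_z$), and the ``by symmetry of the roles'' step is false. Lemma~\ref{lem: beta_n} gives only the one-sided orthogonality $(A_zp_n,\varphi)=0$ for $\varphi\in V_n$, with the direction $p_n$ in the \emph{first} slot; swapping slots is not permitted. Concretely, $(A_zp_{n-1},p_n)=\overline{(A_zp_n,p_{n-1})}+(z-\bar z)(p_{n-1},p_n)=2i\,\Im(z)\,\bar\beta_{n-1}\,\|p_{n-1}\|^2$, which is non-zero whenever $\Im z\ne0$ and $\beta_{n-1}\ne0$. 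Hence the Gram matrix $\bigl((A_zp_k,p_j)\bigr)_{j,k}$ is lower triangular, not diagonal, and testing the Galerkin equation with $p_n$ leaves $\gamma_n(A_zp_n,p_n)=(r_0,p_n)-\sum_{k<n}\gamma_k(A_zp_k,p_n)$, where the sum does not vanish.

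Your second gap compensates the first. The claimed orthogonality $(r_j,p_n)=0$ for $0\le j<n$ is also false: using $(r_j,r_m)=0$ for $j<m$ and unwinding $p_n=r_n+\beta_{n-1}p_{n-1}$ gives $(r_j,p_n)=\bar\beta_{n-1}(r_j,p_{n-1})=\cdots=\bigl(\prod_{i=j}^{n-1}\bar\beta_i\bigr)\|r_j\|^2$, generically non-zero, so $(r_0,p_n)\ne(r_n,p_n)$. The two errors cancel exactly --- the correct identity is $\gamma_n(A_zp_n,p_n)=(r_0,p_n)-(A_zv_n,p_n)=(r_n,p_n)=\|r_n\|^2$, and the subtracted term is exactly $(r_0-r_n,p_n)$, the same quantity by which your claim~(ii)/(iii) is off --- so your final formula matches the proposition even though neither intermediate step holds. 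The paper's proof avoids both traps: it writes $w_{n+1}-w_n=\varphi+\alpha_np_n$ with $\varphi\in V_n$, kills $\varphi$ via $(A_z\varphi,\varphi)=0$, then pairs $A_z(w_{n+1}-w_n)$ against $r_n$ in the \emph{second} slot (using $(r_{n+1},r_n)=0$), and simplifies $(A_zp_n,r_n)$ to $(A_zp_n,p_n)$ via $r_n=p_n-\beta_{n-1}p_{n-1}$ --- so every orthogonality appealed to places its argument in the slot where Lemmas~\ref{lem: orthog resid} and \ref{lem: beta_n} actually apply. You need to redo steps (i)--(iii) with the same care.
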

\begin{proof}
Since $w_{n+1}-w_n\in V_{n+1}$ we have $w_{n+1}-w_n=\fy+\alpha_np_n$
for some $\fy\in V_n$ and some scalar~$\alpha_n$.  
Since $(A_zp_n,\fy)=0$ by Lemma \ref{lem: beta_n}, and using \eqref{cg2},
we have
\[
(A_z\fy,\fy)=\bigl(A_z(w_{n+1}-w_n),\fy\bigr)-\al_n(A_zp_n,\fy)=(g,\fy)-(g,\fy)=0,
\]
implying that $\fy=0$.  
For $\al_n$ we find, because $(r_{n+1},r_n)=0$,
\[
\al_n(A_zp_n,r_n)=(A_z(w_{n+1}-w_n),r_n)=
(r_n-r_{n+1},r_n)=\|r_n\|^2.
\]
Here,  since $(A_zp_n,p_{n-1})=0$,
\[
(A_zp_n,r_n)=(A_zp_n,p_n-\beta_{n-1}p_{n-1})=(A_zp_n,p_n),
\]
which shows the value of $\al_n$ stated.
\end{proof}

Note that, by Proposition \ref{l4.3},
\begin{equation}\label{eq: rn recursion}
r_{n+1}=r_n-A_z(w_{n+1}-w_n)=r_n-\al_nA_zp_n,
\end{equation}
so that also the $r_n$ may be
computed recursively. Since $A_zp_n$ needs to be computed anyway
to determine $\al_n$ and $\be_n$ this saves one application
of $A_z$.  We remark that for real $z>0$ the scalar~$\alpha_n$ is real so
$-\alpha_n(r_{n+1},A_zp_n)=(r_{n+1},r_n-\alpha_nA_zp_n)
=\|r_{n+1}\|^2$ and $\beta_n=\|r_{n+1}\|^2/\|r_n\|^2$, which is the
formula used in the classical CG method.

We readily show, using \eqref{eq: rn recursion}~and Proposition~\ref{l4.3},
that 
\[
p_{n+1}=(1+\beta_n)p_n-\alpha_n A_zp_n-\beta_{n-1} p_{n-1},
\]
which is consistent with a result of Faber and 
Manteuffel~\cite[Section~F]{FaberManteuffel1987}:
if a matrix has a complete set of eigenvectors with all eigenvalues lying 
on a line segment in the complex plane, then there exists an inner product
for which the CG iteration yields vectors~$p_n$ that 
satisfy such a three-term recurrence relation.

The algorithm to compute $w_n$ suggested by
Lemma \ref{lem: beta_n} and Proposition \ref{l4.3}
then goes as follows:
Given a preliminary guess $w_0$, compute $r_0=g-A_zw_0$, and set $p_0=r_0$.
The iterative step for $w_n$ and $p_n$ known is then to find first
$w_{n+1}$ from Proposition~\ref{l4.3} and then, 
using \eqref{eq: rn recursion} to
determine $r_{n+1}$, to find $p_{n+1}$ from \eqref{4.pstep}.
The iterations continue until, e.g.,
$\|w_{n+1}-w_n\|$ or $\|r_{n+1}\|$ is bounded by a tolerance, or, 
cf.~Theorem~\ref{t4.1}, this holds for $|\eta_z|^n$.

Consider using this algorithm when~\eqref{cg1} is the linear 
system~\eqref{1.matsg} arising from the semidiscrete, standard Galerkin method 
applied to the heat equation~\eqref{1.heat}.
As before, we have $V=\C^N$, $A=\M^{-1}\S$ and 
$(\vecv,\vecw)=\iprod{\M\vecv,\vecw}$.  Thus, each
application of $A_z$ involves multiplication by~$\M^{-1}$,
however this cost is not incurred in the computation
of $\al_n$ and $\be_n$, since
$(A_z\vecv,\vecw) =z\iprod{\M\vecv,\vecw}+\iprod{\S\vecv,\vecw}.$
\medskip

We now turn to the preconditioned CG method, and consider first the
special preconditioner~$B_z=(\mu_zI+A)^{-1}$, and the
method based on reformulating~\eqref{cg1}
as~\eqref{3.8}, with iterates defined by
\eqref{3.vnt} and \eqref{3.teq}.
The above analysis and the corresponding algorithm  may  be 
applied also in this case. In the iteration step, we now have
$r_n=\wt z B_zg-(\wt z\,I+B_z)w_n$ and in the computation of
$\al_n$ and $\be_n$, the inner product $(A_zv,w)$ is
replaced by $\bigl((\wt z\,I+B_z)v,w\bigr)$. In matrix form,
\eqref{3.8} may be written
\[
\wt z\vecw+(\mu_z\M+\S)^{-1}\M\vecw=\wt z(\mu_z\M+S)^{-1}\vecg,
\]
and for the inner product we have
\[
\wt z(\vecv,\vecw)+(B_z\vecv,\vecw)
=\wt z\iprod{\M\vecv,\vecw}+\iprod{(\mu_z\M+\S)^{-1}\M\vecv,\M\vecw}.
\]
In particular, this method admits a three term recurrence relation,
although the algorithm then requires the application
of~$(\mu_z\M+\S)^{-1}$, which is normally more expensive than that 
of~$\M^{-1}$. This drawback holds also in the case of the lumped mass 
variant of the spatial discretization, where $\M$ is replaced by a
diagonal matrix~$\D$.

\medskip

Although, as noted at the end of Section~\ref{sec3}, the
error analyses of Theorems \ref{t4.1}~and \ref{t3.2} do not carry
over to preconditioned equations of the form \eqref{3.gprec},
we shall
nevertheless proceed to consider the CG method for such equations,
given by \eqref{3.kryt}~and \eqref{6.eq}.  We
derive a recursive  algorithm for computing the~$w_n$, and
in the same way as above first show the following, in which we again
put $r_n=g-A_zw_n$.

\begin{lemma}\label{5.ortres}
The preconditioned residual $\wt r_n:=\wt g_z-G_zw_n=B_zr_n$ satisfies 
\[
\wt r_n\in\wt V_{n+1}\quad\text{and}\quad
[\wt r_n,\fy]=(r_n,\fy)=0\quad
\text{for all $\fy\in \wt V_n$,}\quad\text{for $1\le n\le N$.}
\]
If $r_0\ne0$,
there exists $N^*\le N$ such that 
$r_n\ne0$ for $0\le n<N^*$,
$r_n=0$ for $n\ge N^*$.
\end{lemma}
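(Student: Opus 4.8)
The statement to prove is Lemma~\ref{5.ortres}, which is the preconditioned analogue of Lemma~\ref{lem: orthog resid}. The plan is to imitate the proof of Lemma~\ref{lem: orthog resid} step by step, replacing the inner product $(\cdot,\cdot)$ by $[\cdot,\cdot]=(B_z^{-1}\cdot,\cdot)$, the operator $A_z$ by $G_z=B_zA_z$, the Krylov spaces $V_n$ by $\wt V_n$, and the residual $r_n$ by the preconditioned residual $\wt r_n=B_zr_n$. The key algebraic identity that makes the translation work is $[\wt r_n,\fy]=(B_z^{-1}B_zr_n,\fy)=(r_n,\fy)$, so the $[\cdot,\cdot]$-orthogonality of $\wt r_n$ to $\wt V_n$ is the \emph{same} as $(\cdot,\cdot)$-orthogonality of $r_n$ to $\wt V_n$, which is exactly what the Galerkin equation \eqref{6.eq} asserts.

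First I would check the inclusion $\wt r_n\in\wt V_{n+1}$. Since $w_n=w_0+v_n$ with $v_n\in\wt V_n$, we have
\[
\wt r_n=\wt g_z-G_zw_n=(\wt g_z-G_zw_0)-G_zv_n=\wt r_0-G_zv_n,
\]
and because $G_z\wt V_n\subset\wt V_{n+1}$ by the definition \eqref{3.kryt} of $\wt V_n$, while $\wt r_0\in\wt V_1\subset\wt V_{n+1}$, the inclusion follows. (As in Lemma~\ref{lem: orthog resid}, if $\wt r_n=0$ the claim is trivial, so one may tacitly assume $\wt r_n\ne0$.) Next, the orthogonality: for $\fy\in\wt V_n$ the Galerkin condition \eqref{6.eq} gives $(A_zw_n,\fy)=(g,\fy)$, hence $(r_n,\fy)=(g-A_zw_n,\fy)=0$, and therefore also $[\wt r_n,\fy]=(B_z^{-1}\wt r_n,\fy)=(r_n,\fy)=0$, using $B_z^{-1}\wt r_n=B_z^{-1}B_zr_n=r_n$. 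This establishes both displayed relations.

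For the termination statement, I would argue exactly as in Lemma~\ref{lem: orthog resid}. If $r_n=0$ then $A_zw_n=g$, so $w_n$ solves \eqref{cg1}; uniqueness of the solution of \eqref{cg1} (shown in Section~\ref{sec3} via positivity of $(A_zv,v)$ when $\arg z\in(-\pi,\pi)$) together with the fact that $w_n$ also solves \eqref{6.eq} forces $w_j=w_n$, hence $r_j=0$, for all $j\ge n$. Consequently the set $\{n:r_n\ne0\}$ is an initial segment $\{0,1,\dots,N^*-1\}$ for some $N^*$, and the bound $N^*\le N$ follows since the residuals $r_0,\dots,r_{N^*-1}$ are nonzero and, being the $B_z$-images of the $[\cdot,\cdot]$-orthogonal preconditioned residuals $\wt r_0,\dots,\wt r_{N^*-1}$ (orthogonal by the relation just proved applied with varying $n$), are linearly independent in the $N$-dimensional space $V$.

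I do not expect any serious obstacle here: the whole point is that the substitution $[\cdot,\cdot]\leftrightarrow(\cdot,\cdot)$ via $\wt r_n=B_zr_n$ is exact, so nothing in the proof of Lemma~\ref{lem: orthog resid} actually uses that the relevant sesquilinear form is an inner product in a way that fails in the preconditioned setting. The one point requiring a word of care is the linear independence used for $N^*\le N$: one should note that $[\cdot,\cdot]$ \emph{is} a genuine inner product (since $B_z$ is Hermitian positive definite), so mutual $[\cdot,\cdot]$-orthogonality of the nonzero vectors $\wt r_0,\dots,\wt r_{N^*-1}$ does give independence, and applying the isomorphism $B_z$ preserves it.
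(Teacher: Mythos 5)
Your proof is correct and is exactly what the paper intends: the paper states Lemma~\ref{5.ortres} without a separate proof, explicitly saying it is shown ``in the same way as above,'' i.e., by repeating the argument for Lemma~\ref{lem: orthog resid} with $(\cdot,\cdot)$, $A_z$, $V_n$, $r_n$ replaced by $[\cdot,\cdot]$, $G_z$, $\wt V_n$, $\wt r_n$, using the identity $[\wt r_n,\fy]=(r_n,\fy)$. (One tiny slip: you describe the $r_n$ as ``the $B_z$-images of the preconditioned residuals,'' but it is the other way round, $\wt r_n=B_zr_n$; this does not affect the linear-independence argument since $B_z$ is an isomorphism.)
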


We define the the sequence~$p_n$ recursively, cf.~\eqref{4.pn}: if 
$p_k\ne0$ for $0\le k\le n$, set
\begin{equation}\label{eq: preconditioned pn}
p_{n+1}:=\wt r_{n+1}+\sum_{k=0}^n\beta_{nk}p_k,\for n\ge0,\with
p_0:=\wt r_0,
\end{equation}
where $\bigl(\be_{n0},\beta_{n1},\ldots,\beta_{nn}\bigr)$ is now
the solution of the lower-triangular, $(n+1)\times(n+1)$ linear system
\begin{equation}\label{6.bk}
\sum_{k=0}^j(A_zp_k,p_j)\beta_{nk}=-(A_z\wt r_{n+1},p_j),\quad
\text{for $0\le j\le n$.}
\end{equation}
The existence and uniqueness of the $\beta_{nk}$ follows since the
diagonal entries $(A_zp_n,p_n)$ are non-zero for~$n<N^*$,
because otherwise $p_n=0$ and we would have 
$\wt r_n\in \wt V_n$ and thus $r_n=0$.
Unfortunately, in contrast to the situation earlier,
$\beta_{nk}\ne0$ is possible for~$k<n-1$,
which requires all the $p_j$ to be stored.
Using the definition \eqref{6.bk} of the $\beta_{nk}$, 
we may now show the following partial
analogue of Lemma~\ref{lem: beta_n}.
 
\begin{lemma}\label{5.pj}
If $r_0\ne0$, then $0\ne p_n\in \wt V_{n+1}$ and
$(A_zp_n,p_k)=0$ for $0\le k<n<N^*$.
\end{lemma}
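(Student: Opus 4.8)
The plan is to argue by induction on $n$, in close parallel with the proof of Lemma~\ref{lem: beta_n}, carrying simultaneously three assertions for $0\le n<N^*$: that $p_n$ is well defined by \eqref{eq: preconditioned pn} and \eqref{6.bk}, that $0\ne p_n\in\wt V_{n+1}$, and that $(A_zp_n,p_k)=0$ for $0\le k<n$. The base case $n=0$ is immediate, since $p_0=\wt r_0=B_zr_0\ne0$ (because $B_z$ is invertible and $r_0\ne0$) and $p_0\in\wt V_1$ by definition, the orthogonality assertion being vacuous.

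For the inductive step I would fix $1\le n<N^*$, assume the three assertions for all $m<n$, and proceed in three stages. First, $p_n$ is well defined: the coefficients $\beta_{n-1,k}$ solve the lower-triangular system \eqref{6.bk} with $n$ replaced by $n-1$, whose diagonal entries $(A_zp_j,p_j)$, $0\le j\le n-1$, are nonzero because $p_j\ne0$ by the inductive hypothesis and because $(A_zv,v)=z\|v\|^2+(Av,v)\ne0$ whenever $v\ne0$, exactly as in the uniqueness argument for \eqref{cg2}. Second, $p_n\in\wt V_{n+1}$, since $\wt r_n\in\wt V_{n+1}$ by Lemma~\ref{5.ortres} while each $p_k\in\wt V_{k+1}\subseteq\wt V_n\subseteq\wt V_{n+1}$ for $0\le k\le n-1$ by the inductive hypothesis. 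Third, $p_n\ne0$: if $p_n=0$ then $\wt r_n=-\sum_{k=0}^{n-1}\beta_{n-1,k}p_k$ lies in $\wt V_n$, so Lemma~\ref{5.ortres} with $\fy=\wt r_n$ gives $[\wt r_n,\wt r_n]=0$; since $[\cdot,\cdot]$ is an inner product this forces $\wt r_n=0$, hence $r_n=B_z^{-1}\wt r_n=0$, contradicting $n<N^*$.

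There remains the orthogonality $(A_zp_n,p_j)=0$ for $0\le j\le n-1$. Expanding $p_n$ via \eqref{eq: preconditioned pn} gives
\[
(A_zp_n,p_j)=(A_z\wt r_n,p_j)+\sum_{k=0}^{n-1}\beta_{n-1,k}(A_zp_k,p_j),
\]
and the idea is that the sum reduces to its terms with $k\le j$, which then cancel $(A_z\wt r_n,p_j)$. For the reduction I would invoke the inductive orthogonality in the form $(A_zp_k,p_j)=0$ for $j<k\le n-1$ --- note it is this ordering of the arguments that the hypothesis provides, which matters since $(A_z\cdot,\cdot)$ is not symmetric for complex $z$. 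For the cancellation I would observe that the surviving terms $\sum_{k=0}^j(A_zp_k,p_j)\beta_{n-1,k}$ are precisely the left-hand side of the $j$-th equation of \eqref{6.bk}, equal to $-(A_z\wt r_n,p_j)$. This closes the induction.

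The main obstacle is essentially bookkeeping: being consistent about which of $(A_zp_k,p_j)$ or $(A_zp_j,p_k)$ is supplied by the inductive hypothesis, and checking that the truncated system \eqref{6.bk} matches exactly the terms left over. I would also emphasize, by way of contrast with Lemma~\ref{lem: beta_n}, that the collapse $\beta_{nk}=0$ for $k<n-1$ has no analogue here, so all the $p_k$ must be retained and no short recurrence for $p_{n+1}$ follows in general.
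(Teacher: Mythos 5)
Your proposal is correct and follows essentially the same route as the paper's proof: the nonvanishing and membership $0\ne p_n\in\widetilde V_{n+1}$ are established by the same induction as in Lemma~\ref{lem: beta_n} (you spell out the contradiction via $[\widetilde r_n,\widetilde r_n]=0$, which is what that argument amounts to here), and the orthogonality is proved by induction on~$n$, splitting the sum at $k=j$ so that the terms with $k>j$ vanish by the inductive hypothesis and the terms with $k\le j$ cancel against $(A_z\widetilde r_n,p_j)$ by the $j$-th equation of~\eqref{6.bk}. The only cosmetic difference is that you merge all three claims into a single simultaneous induction, whereas the paper handles the first two by reference to Lemma~\ref{lem: beta_n} and runs a separate induction for the orthogonality.
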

\begin{proof}
The argument used for Lemma~\ref{lem: beta_n} again establishes
that $0\ne p_n\in \wt V_{n+1}$ for $0\le n<N^*$, and to prove the second
claim we again use finite induction on~$n$:
Taking~$n=0$ in~\eqref{6.bk} gives
$\beta_{00}=-(A_z\wt r_1,p_0)/(A_zp_0,p_0)$
so
\[
(A_zp_1,p_0)=\bigl(A_z(\wt r_1+\beta_{00}p_0),p_0\bigr)
	=(A_z\wt r_1,p_0)+\beta_{00}(A_zp_0,p_0)=0.
\]
Now let $1\le n<N^*$ and assume that $(A_zp_k,p_j)=0$ 
for $0\le j<k\le n$.  For $0\le j\le n$,
\[
(A_zp_{n+1},p_j)=(A_z\wt r_{n+1},p_j)+\sum_{k=0}^j\beta_{nk}(A_zp_k,p_j)
	+\sum_{k=j+1}^{n}\beta_{nk}(A_zp_k,p_j)=0.
\]
This completes the induction step and thus the proof of the lemma.
\end{proof}

As a consequence of Lemma \ref{5.pj}, the conclusion of
Proposition~\ref{l4.3} remains valid: 

\begin{proposition}\label{p4.3}
If $0\le n<N^*$, then
\[
w_{n+1}=w_n+\alpha_np_n,
\where
\alpha_n:=\frac{\db{\wt r_n}^2}{(A_zp_n,p_n)}
	=\frac{(r_n,\wt r_n)}{(A_zp_n,p_n)}.
\]
\end{proposition}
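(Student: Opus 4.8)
The plan is to follow the proof of Proposition~\ref{l4.3} step by step, with $\wt V_n$, $\wt r_n=B_zr_n$, Lemma~\ref{5.pj} and Lemma~\ref{5.ortres} replacing $V_n$, $r_n$, Lemma~\ref{lem: beta_n} and Lemma~\ref{lem: orthog resid}. First I would dispose of the asserted equality of the two expressions for~$\alpha_n$: since $\wt r_n=B_zr_n$ and $B_z$ is Hermitian, $\db{\wt r_n}^2=[\wt r_n,\wt r_n]=(B_z^{-1}\wt r_n,\wt r_n)=(r_n,\wt r_n)$, which is moreover real and strictly positive whenever $r_n\ne0$, that is, for $n<N^*$.

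Next I would note, just as in Lemma~\ref{lem: beta_n}, that $p_n\notin\wt V_n$: otherwise $\wt r_n=p_n-\sum_{k<n}\beta_{n-1,k}p_k\in\wt V_n$, whence $[\wt r_n,\wt r_n]=(r_n,\wt r_n)=0$ and so $r_n=0$, contradicting $n<N^*$. Together with Lemma~\ref{5.pj} this gives $\wt V_{n+1}=\vecspan\{p_0,\dots,p_n\}$ and $\wt V_n=\vecspan\{p_0,\dots,p_{n-1}\}$. Hence $w_{n+1}-w_n\in\wt V_{n+1}$ can be written as $w_{n+1}-w_n=\fy+\alpha_np_n$ with $\fy\in\wt V_n$ and $\alpha_n$ a scalar. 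To see that $\fy=0$: we have $(A_zp_n,\fy)=0$ by Lemma~\ref{5.pj}, since $\fy$ is a linear combination of $p_0,\dots,p_{n-1}$; and \eqref{6.eq} applied at levels $n+1$ and $n$ gives $(A_z(w_{n+1}-w_n),\fy)=0$ because $\fy\in\wt V_n\subseteq\wt V_{n+1}$. Subtracting, $(A_z\fy,\fy)=0$, and then the second inequality of Lemma~\ref{l4.1} forces $\tb{\fy}=0$, so $\fy=0$ and $w_{n+1}-w_n=\alpha_np_n$.

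Finally, to identify $\alpha_n$, I would use the residual recursion $r_{n+1}=r_n-A_z(w_{n+1}-w_n)=r_n-\alpha_nA_zp_n$ and pair it with $\wt r_n$. Since $\wt r_n\in\wt V_{n+1}$ by Lemma~\ref{5.ortres}, we have $(r_{n+1},\wt r_n)=0$, so $\alpha_n(A_zp_n,\wt r_n)=(r_n-r_{n+1},\wt r_n)=(r_n,\wt r_n)=\db{\wt r_n}^2$; and from $p_n=\wt r_n+\sum_{k<n}\beta_{n-1,k}p_k$ together with $(A_zp_n,p_k)=0$ for $k<n$ (Lemma~\ref{5.pj}) we obtain $(A_zp_n,\wt r_n)=(A_zp_n,p_n)$, which yields the stated formula. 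The case $n=0$ is identical, with the term $\fy$ absent because $\wt V_0=\{0\}$.

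The one delicate point, and the sole real departure from the unpreconditioned argument, is that here $r_n$ itself need not lie in $\wt V_{n+1}$ (only the preconditioned residual $\wt r_n=B_zr_n$ does), so the orthogonality relation that read $(r_{n+1},r_n)=0$ in Proposition~\ref{l4.3} must be replaced by $(r_{n+1},\wt r_n)=0$; this is precisely why the quantity naturally arising in $\alpha_n$ is $(r_n,\wt r_n)=\db{\wt r_n}^2$ rather than $\|r_n\|^2$. Everything else is the same bookkeeping already carried out for Proposition~\ref{l4.3}, so I do not anticipate any further obstruction.
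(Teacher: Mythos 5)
Your proof is correct and takes essentially the same approach as the paper's: you verify that the opening steps of Proposition~\ref{l4.3} carry over (using Lemma~\ref{5.pj} for $(A_zp_n,\fy)=0$ and the Galerkin conditions at levels $n$~and $n+1$ to force $\fy=0$), and then identify $\alpha_n$ by pairing the residual recursion against $\wt r_n$. The paper phrases the last computation in the $[\cdot,\cdot]$ inner product, writing $\alpha_n(A_zp_n,\wt r_n)=[G_z(w_{n+1}-w_n),\wt r_n]=[\wt r_n-\wt r_{n+1},\wt r_n]=[\wt r_n,\wt r_n]$ and using $[\wt r_{n+1},\wt r_n]=0$, whereas you phrase it in the $(\cdot,\cdot)$ inner product via $(r_{n+1},\wt r_n)=0$ from Lemma~\ref{5.ortres}; since $[\wt r_{n+1},\wt r_n]=(r_{n+1},\wt r_n)$ these are the same orthogonality relation, and both versions reach $(A_zp_n,\wt r_n)=(A_zp_n,p_n)$ by expanding $\wt r_n=p_n-\sum_{k<n}\beta_{n-1,k}p_k$.
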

\begin{proof}
The beginning of the proof of Proposition~\ref{l4.3} goes through
verbatim, but since $[\wt r_{n+1},\wt r_n]=0$,
\begin{align*}
\alpha_n(A_zp_n,\wt r_n)&=[G_z(\alpha_np_n),\wt r_n]
	=[G_z(w_{n+1}-w_n),\wt r_n]\\
	&=[\wt r_n-\wt r_{n+1},\wt r_n]=[\wt r_n,\wt r_n]
\end{align*}
and, since $\wt r_n=p_n-\sum_{k=0}^{n-1}\beta_{n-1,k}p_k$,
\begin{align*}
(A_zp_n,\wt r_n)
	=(A_zp_n,p_n)-\sum_{k=0}^{n-1}\bar\beta_{n-1,k}(A_zp_n,p_k)
	=(A_zp_n,p_n).
\end{align*}
\end{proof}

Again, the residuals satisfy
$r_{n+1}=r_n-\alpha_n A_zp_n$, implying that
the preconditioned residuals satisfy
$\wt r_{n+1}=\wt r_n-\alpha_n G_zp_n$.
Each iteration is now more expensive than in the algorithm
proposed by Lemma \ref{lem: beta_n} and Proposition \ref{l4.3},
both in CPU time and memory requirements, and one
may want to restart the iteration every $m$~steps for 
some moderate choice of~$m$.  Figure~\ref{fig: gen precond mod CG}
provides a pseudocode outline of the method in its matrix formulation,
where, as in the discussion following Corollary~\ref{cor: ||Hz-I||},
we let $\A_z=z\M+\S$ and allow $\B_z$ to be any symmetric positive 
definite matrix.  Notice that by working with $\M\vecr_n$ instead 
of~$\vecr_n$, we can avoid computing the action of~$\M^{-1}$.

\begin{figure}
\hrule\vspace{1\jot}
\begin{algorithmic}
\STATE{$\M\vecr_0=\vecg-\A_z\vecw_0$}
\STATE{$\vecp_0=\wt\vecr_0=\B_z\M\vecr_0$}
\FOR{$n=0$ to \textit{max\_iterations}}
\STATE{$\alpha_n=\iprod{\M\vecr_n,\wt\vecr_n}
/\iprod{\A_z\vecp_n,\vecp_n}$}
\STATE{$\vecw_{n+1}=\vecw_n+\alpha_n\vecp_n$}
\STATE{$\M\vecr_{n+1}=\M\vecr_n-\alpha_n\A_z\vecp_n$
(or $\M\vecr_{n+1}=\vecg-\A_z\vecw_n$)}
\STATE{$\wt\vecr_{n+1}=\B_z\M\vecr_n$}
\IF{\textit{converged}}
\STATE{\textbf{break}}
\ENDIF
\STATE{Solve $\sum_{k=0}^j\iprod{\A_z\vecp_k,\vecp_j}\beta_{nk}
=-\iprod{\A_z\wt\vecr_{n+1},\vecp_j}$ for $0\le j\le n$}
\STATE{$\vecp_{n+1}=\wt\vecr_{n+1}+\sum_{k=0}^n\beta_{nk}\vecp_k$}
\ENDFOR
\end{algorithmic}
\vspace{1\jot}\hrule
\caption{Matrix version of CG method for $\A_z\vecw=\vecg$,
preconditioned by~$\B_z$.}
\label{fig: gen precond mod CG}
\end{figure}

\section{A model problem}
\label{sec5}
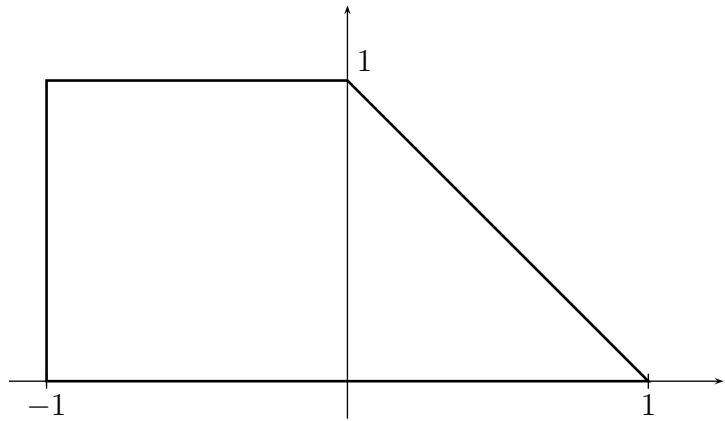
\begin{figure}
\begin{center}
\begin{pspicture}(-5,-1)(5,5)
\psset{linewidth=1pt}
\pspolygon(-4,0)(4,0)(0,4)(-4,4)
\psset{linewidth=0.5pt}
\psline{->}(-4.5,0)(5,0)
\psline{->}(0,-0.5)(0,5)
\uput[d](4,0){$1$}
\psline(4,-0.1)(4,0.1)
\uput[d](-4,0){$-1$}
\psline(-4,-0.1)(-4,0.1)
\uput[ur](0,4){$1$}
\end{pspicture}
\end{center}
\caption{The domain~$\Omega$.}\label{fig: Omega}
\end{figure}

We now describe a concrete initial boundary-value problem~\eqref{1.heat},
mentioned already in the numerical examples of Sections 
\ref{sec: Richardson}~and \ref{sec3}, and
present some further illustrations of our results.

For the domain~$\Omega$ we took the trapezium with vertices
$(1,0)$, $(0,1)$, $(-1,1)$ and $(-1,0)$, shown in
Figure~\ref{fig: Omega}.  The minimum eigenvalue of~$-\nabla^2$
on~$\Omega$ is close to~$15$, so we chose the diffusivity~$a=1/15$
to give a time scale of order~1 for~\eqref{1.heat}.  
We chose the data $u_0$~and $f$ so that the exact solution is
\[
u(x,y,t)=(1+x)(1-x-y)\sin(\pi y)(1+2t)e^{-t},
\]
and used continuous, piecewise linear finite elements on 
a quasi-uniform, unstructured triangulation~$\T_h$ of~$\Omega$, generated by 
the program Gmsh~\cite{GeuzaineRemacle}.  The dimension of the finite
element space~$V_h$ was $N=2663$, and the maximum element diameter was
$h=0.035$.  The extremal eigenvalues of the operator~$A=\M^{-1}\S$ were
$\lambda_1=1.01380$ and $\lambda_N=4006.79$.

Table~\ref{tab: discretization error} shows the (discrete) $L_2$-norm
of the error in~$U_{q,h}(t)$ at four values of~$t$, for three choices 
of~$q$, as well as the norm of the solution itself.  We see that once
$q$ is about 20, the $O(h^2)$~error from the spatial discretization
dominates the $O(e^{-q/\log q})$ error from the time discretization;
cf.~\eqref{1.err}.
(Interestingly,
the lumped mass approximation, in which we replace the mass matrix~$\M$
by a diagonal matrix~$\D$, gave slightly more accurate results, with
the added bonus of more favourable extremal eigenvalues: 
$\lambda_1=1.01248$ and $\lambda_N=1387.22$.)  

\begin{table}
\renewcommand{\arraystretch}{1.2}
\begin{center}
\caption{Discretization error $\|U_{q,h}(t)-u(t)\|_h$.}
\label{tab: discretization error}
\begin{tabular}{c|ccc|c}
$t$&$q=10$&$q=20$&$q=30$&$\|u(t)\|_h$\\
\hline
 0.25  &     1.3436e-02   &     4.3778e-04   &     4.1747e-04  &  0.4452 \\
 0.50  &     6.1232e-04   &     1.6260e-04   &     1.7541e-04  &  0.4623 \\
 1.00  &     2.2024e-04   &     2.1088e-04   &     2.1114e-04  &  0.4206 \\
 2.00  &     1.9403e-04   &     1.9411e-04   &     1.9411e-04  &  0.2579 \\
\end{tabular}
\end{center}
\end{table}

Figure~\ref{fig: modcg conv} shows the convergence history of
the CG method (without preconditioning) when $z=z_j$,
for $j=15$~and $q=20$.  Here, $e_n$ is the \emph{solver} error,
that is, the difference between the $n$th CG iterate and the
exact solution of the discrete problem (as computed using a
direct solver~\cite{Davis2004}).  As well as the $L_2$ error~$\|e_n\|$
and the error~$\tb{e_n}$ in the norm~\eqref{3.tbdef}, we show the
theoretical bound of Theorem~\ref{t3.2}, which is pessimistic but
with roughly the correct error reduction factor.

\begin{figure}
\begin{center}
\includegraphics[scale=0.55]{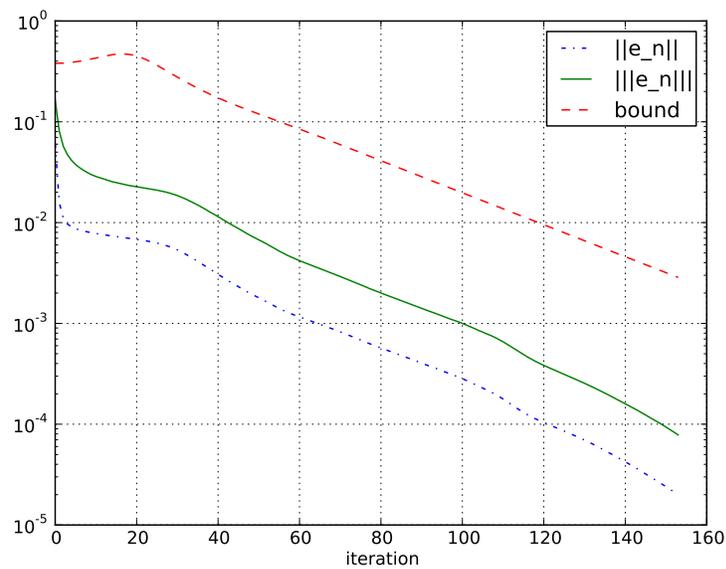}
\end{center}
\caption{Convergence history of CG method (no preconditioning).}
\label{fig: modcg conv}
\end{figure}

\begin{table}
\renewcommand{\arraystretch}{1.2}
\newcommand{\num}{\phantom{0}}
\begin{center}
\caption{Iteration counts at different quadrature points.}
\label{tab: iterations}
\begin{tabular}{r|cc|rcrc|cc}
&\multicolumn{2}{c|}{Richardson}&
\multicolumn{4}{c|}{CG}&
\multicolumn{2}{c}{}\\
\hline
$j$& INV& AMG(3)&\multicolumn{1}{c}{--}& INV& IC& AMG(1)&
$\|w_j\|$& $\epsilon_j$\\
\hline
  0&  \num1&  \num5&    250&  \num1&     52&  \num7& 1.14e+00&   3.18e-06\\
  2&  \num7&  \num9&    227&  \num5&     48&  \num7& 1.13e+00&   3.06e-06\\
  4&     10&     15&    235&  \num6&     50&  \num8& 1.03e+00&   2.84e-06\\
  6&     15&     25&    242&  \num7&     51&  \num9& 7.67e-01&   2.78e-06\\
  8&     24&     42&    234&  \num8&     50&     10& 4.39e-01&   3.03e-06\\
 10&     39&     56&    219&  \num9&     46&     11& 2.21e-01&   3.86e-06\\
 12&     49&     57&    184&     10&     40&     11& 1.19e-01&   6.08e-06\\
 14&     48&     45&    149&  \num9&     32&     10& 7.41e-02&   1.27e-05\\
 16&     44&     37&     98&  \num8&     22&  \num9& 5.11e-02&   3.83e-05\\
 18&     32&     26&     34&  \num5&     11&  \num5& 3.69e-02&   1.91e-04\\
 20&  \num8&  \num7&     10&  \num2&      3&  \num2& 2.71e-02&   1.87e-03\\
\end{tabular}
\end{center}
\end{table}

In Table~\ref{tab: iterations} we show iteration counts at 
alternate quadrature points for several versions of the Richardson
and CG iterations.  In the column headings, INV refers to the
the special preconditioner $\B_z=(\mu_z\M+\S)^{-1}$, AMG($k$) refers to
the algebraic multigrid preconditioner~\cite{BellOlsonSchroder2011} 
with $k$~V-cycles, and IC refers to the
incomplete Cholesky preconditioner~\cite{JonesPlassmann1995}.  
The first CG column shows the results
using no preconditioner.  As the acceleration
parameter, we chose $\alpha=\alpha_z$ from Theorem~\ref{th2.3} 
in the case of the INV preconditioner, and $\alpha=\alphasz$ from
Theorem~\ref{2t.prec} for AMG(3).  
For both sets of Richardson iterations, we 
chose $\mu_z$ as in Tables \ref{tab1}--\ref{tab: Richardson AMG}, to 
minimize $\wt\ep_z$ from Theorem~\ref{th2.3}.  
Likewise, for all of the preconditioned CG 
iterations we chose the optimal value of~$\mu_z$ for the INV preconditioner, 
given in Lemma~\ref{3.muopt}.  Except for~$j=0$, the AMG(1) preconditioner
for CG is almost as effective as INV, requiring only 11~iterations in the
worst case.  One could also reduce the setup cost for AMG by using the
same~$\mu_z$ for several nearby quadrature points, but we did not
investigate the tradeoff between the cost saving and a possibly slower
convergence.

As the stopping criterion, we used
\begin{equation}\label{eq: epsilon_j delta}
\|e_n\|\le\epsilon_j\quad\text{where}\quad 
\epsilon_j:=\delta\times\frac{e^{-\Re(z_j)t}}{(q+1)k|z'_j|}
\quad\text{for $\delta=10^{-5}$ and $t=1$.}
\end{equation}
In this way, the estimate~\eqref{eq: E(t)} ensures that the additional
error in $U_{q,h}(t)$ due to the iterative solver is less than~$\delta$.
For $j=0$, we started each iteration with the zero vector, but
for~$j\ge1$, we used the final iterate at~$z_{j-1}$ as the starting
iterate at~$z_j$.
The remaining columns of the table show the values 
of~$\|w_h(z_j)\|$~and $\epsilon_j$.   Since the former are decreasing
and the latter are increasing, the stopping criterion becomes easier 
to satisfy with increasing $j$, overcoming
the deterioration in the error reduction factors of the iterative solvers,
seen in Tables \ref{tab1}, \ref{tab2}~and \ref{tab3}.

\bibliographystyle{plain}
\bibliography{mclean_thomee_pp-refs}
\end{document}